\newtheorem{theorem}{Theorem}[section]
\newtheorem{proposition}[theorem]{Proposition}
\newtheorem{lemma}[theorem]{Lemma}
\newtheorem{observation}[theorem]{Observation}
\theoremstyle{definition}
\newtheorem{definition}[theorem]{Definition}
\newtheorem*{framework}{Framework}
\newtheorem{question}[theorem]{Question}
\newcommand{\res}{\mathbin{\upharpoonright}}
\newcommand{\ran}{\operatorname{ran}}
\newcommand{\dom}{\operatorname{dom}}
\newcommand{\seq}[1]{\langle #1 \rangle}
\newcommand{\nimplies}{\centernot\implies}
\newcommand{\from}{\leftarrow}
\newcommand{\RCA}{\mathsf{RCA}}
\newcommand{\cor}{{\mathrm{cor}}}
\newcommand{\mr}{{\mathrm{mr}}}
\newcommand{\mf}{{\mathrm{mf}}}
\newcommand{\ubfb}{{\mathrm{ubfb}}}
\newcommand{\cf}{{\mathrm{cf}}}
\newcommand{\ii}{{\mathrm{ii}}}
\newcommand{\gen}{{\mathrm{g}}}
\begin{document}

\title{Notions of robust information coding}

\author{Damir D. Dzhafarov}
\address{Department of Mathematics\\
University of Connecticut\\
196 Auditorium Road\\
Storrs, Connecticut 06269 U.S.A.}
%\curraddr{}
\email{damir@math.uconn.edu}

\author{Gregory Igusa}
\address{Department of Mathematics\\
University of Notre Dame\\
255 Hurley Hall\\
Notre Dame, Indiana 46556 U.S.A.}
%\curraddr{}
\email{gigusa@nd.edu}

\thanks{The authors are grateful to Christopher Porter and Theodore Slaman for helpful insights and discussions. Dzhafarov was supported in part by an NSF Postdoctoral Fellowship, and Igusa was supported in part by RTG grant EMSW21-RTG-0838506.}

\maketitle

\begin{abstract}
We introduce and study several notions of computability-theoretic reducibility between subsets of~$\omega$ that are ``robust'' in the sense that if only partial information is available about the oracle, then partial information can be recovered about the output. These are motivated by reductions between~$\Pi^1_2$ principles in the context of reverse mathematics, and also encompasses generic and coarse reducibilities, previously studied by Jockusch and Schupp~\cite{JS-2012}.
\end{abstract}

\section{Introduction}\label{S:intro}

The study of \emph{relative} computation, of which subsets of~$\omega$ are Turing reducible to, or computable from, which others, has been the principal interest of classical computability theory almost from the subject's inception. The concept of oracle Turing machines is natural and powerful. But working with it quickly leads one to discover that in many cases a set is actually computable from another as a result of a much stronger type of reduction. And indeed, this observation has led to the identification and independent study of a myriad of refinements and modifications of ordinary Turing reducibility, each having to do with a particular method of how information can be coded and decoded.

In this article, we introduce several definitions of computability-theoretic reducibility based on robustness of information coding. The background question for us here is the following: how can information be coded so that it can be recovered with only partial access to the oracle, or with unusual rules for accessing the oracle? This is motivated in large part by the study of~$\Pi^1_2$ principles in reverse mathematics, i.e., statements of the form, ``For every set~$A$ (called an \emph{instance} of the principle) with certain arithmetical properties, there is a set~$B$ (called a \emph{solution} to the instance~$A$) with certain arithmetical properties with respect to~$A$''. Proving an implication from some such theorem (over the base theory~$\RCA_0$) typically involves building an instance~$A$ of the principle such that every solution~$B$ to~$A$ is computationally powerful. This can be viewed as a way of coding information into the set~$A$, although the method of decoding (i.e., first passing to a solution) may make it appear rather indirect. In particular, as the set of solutions may be closed under various set-theoretic and computability-theoretic operations (e.g., as is not uncommon, under infinite subset), it is important to know how to build~$A$ in such a way that the information coded into it can be recovered from \emph{any} solution, not just a particular one.

Our aim, then, is to understand the coding strategies that permit this type of ``persistent'' information retrieval. This leads us to formulate the following reducibilities between sets of numbers:
\begin{itemize}
\item mod-finite reducibility,~$\leq_\mf$;
\item cofinite reducibility,~$\leq_\cf$;
\item generic reducibility,~$\leq_\gen$;
\item coarse reducibility,~$\leq_\cor$;
\item mod-recursive reducibility,~$\leq_\mr$;
\item infinite-information reducibility,~$\leq_\ii$;
\item use-bounded-from-below reducibility,~$\leq_\ubfb$;
\end{itemize}
All of these, formally defined in Section \ref{S:def}, have the general form described by the following framework.

\begin{framework}
Given a notion of largeness, a set~$B$ is reducible to a set~$A$ if every set that agrees with~$A$ on a large domain uniformly computes a set that agrees with~$B$ on a large domain. 
\end{framework}

\noindent The idea is that if~$A$ computes~$B$ in a sufficiently robust manner, then changing (or losing) a small number of the bits of the oracle should still allow us to recover most of the output. In a sense, then, we are looking at certain kinds of error-correcting or self-redundant codes, in which no single bit is essential for the computation of too many of the rest.

As a simple-minded example, suppose~$B$ is~$1$-reducible to~$A$, written~$B \leq_1 A$, and meaning~$B = \{n \in \omega: f(n) \in A\}$ for some computable one-to-one function~$f$. Then every set cofinitely equal to~$A$ computes a set cofinitely equal to~$B$, via the same~$1$-reduction. Thus, if ``large'' is taken to mean ``cofinite'', we have that~$B$ is reducible to~$A$ in the sense of the above framework.

In this article, we confine our interest to the seven reducibilities just described, though others can certainly be fitted into our framework. (A discussion of some possible variations and extensions appears at the end of Section~\ref{S:def}.) Likewise, our results here are not meant to be definitive, but rather to develop, also with a view towards future work, a set of techniques with which our reducibilities, and potentially others, can be studied. This article is organized as follows. In Section \ref{S:def} we formally define our reducibilities, and more precisely describe our framework for studying them. In Section \ref{S:imps} we prove Theorem \ref{T:imps}, establishing how our reducibilities compare to one another, and exhibit embeddings of some of these reducibilities into others. Section \ref{S:nonimps} concerns non-implications, and shows that no relationships between our reducibilities hold other than those of Theorem \ref{T:imps}. Finally, in Section \ref{S:inf} we focus on~$\leq_\ii$, and prove that this reducibility admits \emph{maximal} pairs.

\section{Definitions}\label{S:def}

Unless otherwise noted, we shall follow standard notation and terminology employed in the literature. This includes, for a partial function~$f$ on~$\omega$, writing~$f(n) \downarrow$ if~$n$ is in the domain of~$f$. If~$f$ and~$g$ are two partial functions, we shall write~$f(n) \simeq g(n)$ if~$f(n) = g(n)$ provided~$n$ is in the domain of both functions, and we shall write~$f \simeq g$ if~$f(n) \simeq g(n)$ for all~$n$. We shall also always identify sets with their characteristic functions. Though we shall be dealing with various reducibilities, and hence implicitly with various join and meet operators on sets, we shall reserve~$\oplus$ for the ordinary Turing join of sets:~$A\oplus B=\{ 2n:n\in A\}\cup\{ 2n+1:n\in B\}$. We refer the reader to Soare \cite{Soare-1987} for complete background on computability theory, and to Downey and Hirschfeldt \cite{DH-2010} for background on algorithmic randomness, which several of our results will relate to. In addition, we refer the reader to Sacks \cite{Sacks-1990} for background on higher recursion theory, which will be applicable to our work in Section \ref{S:inf}.

We now pass to the definitions of the reducibilities we shall be concerned with.

Our first examples are motivated by recent work studying generic computability and coarse computability,~\cite{JS-2012},~\cite{DJS-2013},~\cite{Igusa-2013}. These notions concern being able to correctly compute most of a set in the following sense.

\begin{definition}\label{D:density1}
Let~$A$ be a subset of the natural numbers. Then~$A$ has \emph{density~1} if the limit of the densities of its initial segments is 1, or in other words, if~$\lim_{n\rightarrow\infty}\frac{|{A\cap n}|}{n}=1$. In this case, we will frequently say that~$A$ is \emph{density-1.}
\end{definition}

\begin{definition}[\cite{JS-2012}, Definitions~1.4 and~2.3]
\
\begin{enumerate}
\item A set~$B$ is \emph{generically computable} if there is a partial computable function~$\Phi$ whose domain has density~$1$ such that~$\Phi \simeq B$.
\item A set~$B$ is \emph{coarsely computable} if there is a total computable function~$\Phi$ such that~$\{n \in \omega: \Phi(n) = B(n)\}$ has density~$1$.
\end{enumerate}
\end{definition}
\noindent In other words, a generically computable set is given by a computation that never lies, but might not halt on a domain of density~$0$. A coarsely computable set, by contrast, is given by a computation that always halts, but might lie on a domain of density~$0$.

Perhaps the most obvious way to relativize generic computability is to say that~$B$ is \emph{generically computable from}~$A$ if there is a functional~$\Phi$ such that~$\Phi^A \simeq B$ with domain of density~$1$. This is not by itself a reducibility between sets because it is highly non-transitive. In fact, Igusa~\cite{Igusa-2013} has shown that for any reflexive binary relation~$R$ on~$\omega$ there exists a class~$\mathcal{C}$ of subsets of~$\omega$ such that~$(\omega,R)$ is isomorphic to the structure of~$\mathcal{C}$ under the relation of generic computability.

Thus, we shall work instead with the following relativizations of coarse and generic computability. The latter appears also in \cite[Definition 4.3]{JS-2012}.

\begin{definition}\label{D:coar_formal}
A set~$B$ is \emph{coarsely} reducible to a set~$A$, written~$B \leq_\cor A$, if there exists a Turing functional~$\Phi$ such that for any set~$C$, if~$\{n \in \omega : C(n) = A(n)\}$ has density~$1$ then~$\Phi^C$ is total and~$\{n \in \omega: \Phi^C(n) = B(n)\}$ has density~$1$.
\end{definition}

Because generic computability intrinsically involves partial computations, if we wish to define generic reducibility, we must first define what a partial oracle is.

\begin{definition}
Let~$A$ be a set. A \emph{partial oracle,}~$(A)$, for~$A$ is a set of ordered triples~$\langle n,x,l\rangle$ such that:
\begin{itemize}
\item $\exists l\big(\langle n,0,l\rangle\in (A)\big)\Longrightarrow n\notin A$;
\item $\exists l\big(\langle n,1,l\rangle\in (A)\big)\Longrightarrow n\in A$.
\end{itemize}
\end{definition}

\noindent We regard~$(A)$ as a partial function that maps~$n$ to~$x$, and that takes~$l$ steps to converge. Under this interpretation, a partial oracle is a partial function~$(A) : \omega \to \{0,1\}$ such that~$(A) \simeq A$. When we refer to querying the~$n$th bit of~$(A)$ we are actually initiating a subalgorithm that searches for some~$x,l$ such that~$\langle n,x,l\rangle\in (A)$. While waiting, a computation is allowed to perform other calculations and even to query other bits of the oracle. The domain of~$(A)$ is the set of~$n$ so that there exist such~$x$ and~$l$.

The primary purpose of the~$l$ in the formal definition is to ensure that, when working with a partial oracle, one does not know whether or not the oracle will give an output on some given input. However, in our notation in this paper, we will follow the intuitive concept of a partial oracle as a partial function, rather than the formal definition as a set of triples. Under this convention, a set~$A$ is a partial oracle for itself that immediately halts on every input.

We may now define generic reducibility. This is not the original definition by Jockusch and Schupp, which used enumeration operators. However, the two definitions are equivalent, as shown in \cite[Proposition 3.10]{Igusa-2013}.

\begin{definition}\label{D:gen_formal}
A set~$B$ is \emph{generically} reducible to a set~$A$, written~$B \leq_\gen A$, if there exists a Turing functional~$\Phi$ such that~$\Phi^A = B$, and for any partial oracle~$(A)$ for~$A$ with density-$1$ domain,~$\Phi^{(A)}$ has density-$1$ domain, and~$\Phi^{(A)} \simeq B$.
\end{definition}

\noindent Note that generic and coarse reducibility agree with generic and coarse computability when~$A = \emptyset$.

Our next reducibilities are analogues of generic and coarse reducibility that allow only finite omission/error. They are motivated in part as a generalization of 1-reducibility, as was mentioned in the previous section, and in part to help understand the difference between partial oracles and partially incorrect oracles by examining the difference in a simplified framework that does not need to reference asymptotic density.

\begin{definition}\label{D:mf_formal}
A set~$B$ is \emph{mod-finite} reducible to a set~$A$, written~$B \leq_\mf A$, if there exists a Turing functional~$\Phi$ such that~$\Phi^A=B$, and for any set~$C$, if~$\{n \in \omega : C(n) = A(n)\}$ is cofinite then~$\Phi^C$ is total and~$\{n \in \omega: \Phi^C(n) = B(n)\}$ is cofinite.
\end{definition}

\begin{definition}\label{D:cf_formal}
A set~$B$ is \emph{cofinitely} reducible to a set~$A$, written~$B \leq_\cf A$ if there exists a Turing functional~$\Phi$ such that~$\Phi^A = B$, and for any partial oracle~$(A)$ for~$A$ with cofinite domain,~$\Phi^{(A)} \simeq B$ with cofinite domain.
\end{definition}

\noindent Note that for these two reducibilities, and also for the upcoming reducibility, the uniformity hypothesis (that a single~$\Phi$ must work for all possible mod-finite, or cofinite oracles for~$A$) is essential, because without it, both of the reducibilities would be equivalent to Turing reducibility.

Our next reducibilities do not involve true notions of largeness, in the commonly accepted sense of being closed under intersection and superset. The first takes ``large'' to be ``computable''.

\begin{definition}\label{D:mr_formal}
A set~$B$ is \emph{mod-recursive} reducible to a set~$A$, written~$B \leq_\mr A$, if there exists a Turing functional~$\Phi$ such that~$\Phi^A=B$, and for any set~$C$, if~$\{n \in \omega : C(n) = A(n)\}$ is computable then~$\Phi^C$ is total and~$\{n \in \omega: \Phi^C(n) = B(n)\}$ is computable.
\end{definition}

\noindent This sort of reducibility is somewhat uncommon in computability theory, not counting fairly trivial cases. However, it is quite common in real-world information coding, where, frequently, changes in the input yield predictable changes in the output. It also provides an interesting example that shows that our framework for computation does not require our notion of largeness to be closed under superset. (A mod-recursive computation must be correct exactly on a computable set, not just correct on a set containing a computable set.)

The second notion takes ``large'' to be ``infinite''. Being infinite is arguably the most salient feature of any notion of largeness, and as such the corresponding reducibility is among the most natural for our purposes. By contrast, the results we obtain about it in Section \ref{S:inf} also make it the most counterintuitive to understand.

\begin{definition}\label{D:ii_formal}
A set~$B$ is \emph{infinite-information} reducible to a set~$A$, written~$B \leq_\ii A$, if there exists a Turing functional~$\Phi$ such that for any partial oracle~$(A)$ for~$A$ with infinite domain,~$\Phi^{(A)} \simeq B$ with infinite domain.
\end{definition}

\noindent This reduction is motivated by, and closely tied to, coding and de-coding techniques commonly used in reverse mathematics, particularly in the study of Ramsey's theorem and related combinatorial principles. For instance, knowing an infinite homogeneous set for a computable stable~$2$-coloring corresponds precisely to knowing an infinite set of bits of a~$\Delta^0_2$ set.

Finally, we consider a reducibility defined in terms of restricting our computation techniques, rather than in terms of an explicit notion of largeness with which to measure our oracles and outputs. This reducibility does not explicitly fit into our framework, but it is closely tied to both mod-finite and cofinite reductions.

\begin{definition}
A set~$B$ is \emph{use-bounded-from-below} reducible to a set~$A$, written~$B \leq_\ubfb A$, if there is a Turing functional~$\Phi$ such that~$\Phi^A = B$ and for every~$m$ and all sufficiently large~$n$, the functional does not query~$A(m)$ in computing~$\Phi^A(n)$.
\end{definition}

\noindent Normally, the use of a computation is thought of in terms of the largest element of the oracle queried to obtain an output, but here, we care instead about the \emph{smallest} such query. Concordantly, we shall avoid, when necessary, various usual conventions of oracle computation that may interfere with the analysis of~$\leq_\ubfb$, such as identifying the use with an initial segment of the oracle.

Each of our reducibilities obviously gives rise to a degree structure. We shall call these the \emph{mod-finite}, \emph{coarse}, \emph{cofinite}, \emph{generic}, \emph{mod-recursive}, \emph{infinite-information}, and \emph{use-bounded-from-below degrees}, respectively.

There are several natural ways in which our definitions above can be modified and extended. The first comes from considering non-uniform versions, in which the functional~$\Phi$ in the definition of the reducibility is allowed to depend on the specific oracle (or partial oracle). Of course, this does not make sense for use-bounded-from-below reducibility, while for mod-finite and cofinite reducibilities, it is easy to check that the non-uniform versions reduce to ordinary Turing reducibility. We do not explicitly study non-uniform versions of generic or coarse reducibilities here, but many of the results hold in either version. The situation for non-uniform infinite-information reducibility is open.

The next natural modification of our definitions is in whether or not we insist that~$\Phi^A = B$. We do not, in this article, consider versions of generic, coarse, and infinite-information reducibility where this hypothesis is added. Indeed, doing so for generic and coarse reducibility would mean the two no longer extend generic and coarse computability, and doing so for infinite-information reducibility would lose the connection to reverse mathematics mentioned after Definition \ref{D:ii_formal}. However, these versions might be interesting to study in their own right. For mod-finite, cofinite, and mod-recursive reducibilities, the addition of this hypothesis is actually superfluous, and we add it only because doing so makes the reducibilities more convenient to work with.

\begin{observation}
\
\begin{enumerate}
\item Let~$A,B,\Phi$ be such that~$\Phi$ witnesses a cofinite reduction of~$B$ to~$A$ without the hypothesis that~$\Phi^A = B$ (i.e., if~$(A)$ is any partial oracle for~$A$ with cofinite domain then~$\Phi^A \simeq B$ with cofinite domain). Then~$B$ is cofinitely reducible to~$A$ (in the full sense of Definition \ref{D:cf_formal}).

\item The same holds for mod-recursive reducibility.

\item The same holds for mod-finite reducibility.

\end{enumerate}

\end{observation}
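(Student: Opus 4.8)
The plan is, given a functional $\Phi$ witnessing one of the three reductions in the weakened sense (without the clause $\Phi^A = B$), to modify it into a functional $\Psi$ witnessing the same reduction in the full sense. The key point is that $A$, regarded as a partial oracle for itself, immediately halts on every input, so it has full domain---in particular cofinite domain, and in particular computable agreement with $A$---and thus may be fed into each of the weakened hypotheses. Doing so controls $\Phi^A$ up to a small error, and I would absorb that error into $\Psi$ without disturbing the robustness of $\Phi$.

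For (1): taking the partial oracle $(A)$ to be $A$ itself, the weakened hypothesis gives $\Phi^A \simeq B$ with cofinite domain; since $B$ is total, this says $\Phi^A(n) = B(n)$ whenever $\Phi^A(n)\downarrow$, and $F := \{n : \Phi^A(n)\uparrow\}$ is finite. I would let $\Psi$ be the functional that on input $n$ outputs the fixed bit $B(n)$ if $n \in F$, and otherwise simulates $\Phi^X(n)$---the finite data $F$ and $B\res F$ being built into $\Psi$. Then $\Psi^A = B$ (off $F$ because $\Psi^A(n) = \Phi^A(n) = B(n)$, on $F$ by construction), and for every partial oracle $(A)$ for $A$ with cofinite domain, $\Psi^{(A)}$ agrees with $\Phi^{(A)}$ except on the finite set $F$, where it converges correctly; hence $\Psi^{(A)} \simeq B$ with cofinite domain, so $\Psi$ witnesses $B \leq_\cf A$. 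Item (3) is the identical argument: the weakened mod-finite hypothesis applied to $C = A$ yields a total $\Phi^A$ with $\{n : \Phi^A(n) = B(n)\}$ cofinite, so $F := \{n : \Phi^A(n) \ne B(n)\}$ is finite and the same hard-coding of $B\res F$ works.

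For (2) the error set is no longer finite: the weakened mod-recursive hypothesis applied to $C = A$ gives $\Phi^A$ total with $\{n : \Phi^A(n) = B(n)\}$ computable, so $F := \{n : \Phi^A(n) \ne B(n)\}$ is merely computable. Here I cannot store $B\res F$, so instead I would use that, $\Phi^A$ and $B$ being $\{0,1\}$-valued, $B(n) = 1 - \Phi^A(n)$ for $n \in F$: let $\Psi^X(n)$ output $1 - \Phi^X(n)$ when $n \in F$ and $\Phi^X(n)$ otherwise, deciding ``$n \in F$'' via a decision procedure for $F$ built into $\Psi$ (legitimate, since $\Psi$ may depend on $A$, $B$, and $\Phi$). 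Again $\Psi^A = B$, and for any $C$ with $\{n : C(n) = A(n)\}$ computable, $\Psi^C$ is total and $\{n : \Psi^C(n) = B(n)\}$ is a Boolean combination of the computable sets $F$ and $\{n : \Phi^C(n) = B(n)\}$, hence computable; so $\Psi$ witnesses $B \leq_\mr A$.

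I expect the whole argument to be routine; the one spot calling for care is the mod-recursive case, where the ``patch the finitely many errors'' move of (1) and (3) fails for cardinality reasons and must be replaced by the observation that on the error set the weakened computation returns exactly the Boolean complement of the correct bit---together with the trivial but load-bearing remark that $\Psi$ is permitted to carry a hard-wired decision procedure for the fixed computable set $F$.
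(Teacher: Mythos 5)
Your proof is correct and takes essentially the same approach as the paper: in case (1) you hard-code $B$ on the finite set where $\Phi^A$ diverges, in case (2) you flip the output bit on the computable disagreement set $\{n : \Phi^A(n) \ne B(n)\}$, and case (3) is handled by either method---exactly the paper's $\widetilde{\Phi}$ constructions. The one cosmetic difference is that you record $F$ as the divergence (resp.\ disagreement) set while the paper writes the same modification directly in cases on $\Phi^A(n)$.
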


\begin{proof}

Fix~$A,B,\Phi$ be as in~1. Then~$\widetilde{\Phi}$ witnesses a cofinite reduction of~$B$ to~$A$, where~$\widetilde{\Phi}$ is defined by
\[
\widetilde{\Phi}^C(n) =
\begin{cases}
\Phi^C(n) &\text{if } \Phi^A(n) \downarrow,\\
B(n) &\text{if } \Phi^A(n) \uparrow.
\end{cases}
\]
Note that~$\{n:\Phi^A(n)\uparrow\}$ is finite, so the values of~$B$ on that set can be coded directly into~$\widetilde\Phi$.

For case~2, assume~$\Phi$ instead witnesses a mod-recursive reduction of~$B$ to~$A$ without the hypothesis that~$\Phi^A = B$. Then~$\widetilde{\Phi}$ witnesses a mod-recursive reduction of~$B$ to~$A$, where~$\widetilde{\Phi}$ is defined by
\[
\widetilde{\Phi}^C(n) =
\begin{cases}
\Phi^C(n) & \Phi^A(n) \downarrow = B(n),\\
1 - \Phi^C(n) & \Phi^A(n) \downarrow \neq B(n),
\end{cases}
\]
Now~$\{n:\Phi^A(n)\neq B(n)\}$ is computable, so~$\widetilde\Phi$ is a computable modification of~$\Phi$.

Case~3 can be handled in either of these ways.
\end{proof}

Finally, we can look further at the differences between the set versus partial oracle versions of our reducibilities, as in mod-finite versus cofinite, and coarse versus generic. The set version of infinite-information reducibility is trivial. Any set is either infinite or coinfinite, and so either the computation that outputs~$0$ everywhere or the computation that outputs~$1$ everywhere is correct infinitely often. Thus, every set would be computable from the empty oracle. The partial oracle version of mod-recursive is also trivial, because the empty set is computable, and it is trivial to produce a partial computation of any real whose domain is empty. If one were to demand that the domains of the input and output be computable and infinite, then this reducibility would share many qualities of infinite-information reducibility. However, we do not explicitly study this version.

\section{Embeddings and implications}\label{S:imps}

We begin with a following theorem, establishing how our reducibilities compare to one another, as well as to~$1$-reducibility and Turing reducibility. For notational convenience, if~$\leq_*$ and~$\leq_{*'}$ are any two reducibilities between sets, we shall say~$\leq_*$ \emph{implies}~$\leq_{*'}$, and write~$\leq_* \implies \leq_{*'}$, if for all sets~$A$ and~$B$,
\[
A \leq_* B \implies A \leq_{*'} B.
\]

\begin{theorem}\label{T:imps}
\
\begin{enumerate}
\item We have
\[
\leq_1 \implies \leq_\mr \implies \leq_T,
\]
and
\[
\leq_1 \implies \leq_\mf \implies \leq_\ubfb \implies \leq_\cf \implies \leq_T.
\]
\item No additional implications hold between any of the above reducibilities, or between them and~$\leq_\gen$,~$\leq_\cor$, and~$\leq_\ii$.
\end{enumerate}
\end{theorem}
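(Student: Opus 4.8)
My plan is to treat the theorem in two parts, mirroring its statement. Part (1) asks for a handful of positive implications; each is a direct construction, so I would simply produce, for each arrow, the functional that witnesses it. Part (2) is the real content: I must show that the diagram in part (1) is complete, i.e. that for every pair $\leq_*, \leq_{*'}$ of listed reducibilities (now including $\leq_\gen$, $\leq_\cor$, $\leq_\ii$) for which part (1) does not already force $\leq_* \implies \leq_{*'}$, there are sets $A, B$ with $B \leq_* A$ but $B \not\leq_{*'} A$. Since this is stated as Theorem~\ref{T:imps}, and the excerpt says non-implications are proved in Section~\ref{S:nonimps}, I expect the paper actually only proves part (1) here and defers part (2); so I will concentrate my plan on part (1) and then indicate how part (2) reduces to a manageable list of separating examples.

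For part (1), I would argue the implications one at a time. The chain $\leq_1 \implies \leq_\mf$ and $\leq_1 \implies \leq_\mr$: if $B = \{n : f(n) \in A\}$ for a computable injection $f$, then the functional $\Phi^C(n) = C(f(n))$ satisfies $\Phi^A = B$, and moreover if $C$ agrees with $A$ cofinitely (resp. on a computable set) then, because $f$ is injective, the agreement set $\{n : \Phi^C(n) = B(n)\} \supseteq f^{-1}(\text{agreement set of } C,A)$ is cofinite (resp. computable, using that $f^{-1}$ of a computable set is computable). For $\leq_\mf \implies \leq_\ubfb$: given a mod-finite reduction $\Phi$, I would build $\widetilde\Phi$ that on input $n$ refuses to query any oracle bit below $n$ — feeding $\Phi$ the answer $A(m)$ for $m < n$ would be illegitimate, so instead I note that the mod-finite hypothesis lets me hard-code a finite initial segment of $A$ and run $\Phi$ with only large queries; more carefully, $\widetilde\Phi^C(n)$ computes $\Phi^C(n)$ but whenever $\Phi$ wants $C(m)$ with $m < n$ it instead uses a value that makes the computation come out equal to $B(n)$ on oracle $A$, which is possible because altering finitely many bits cannot change cofiniteness of the output-agreement set. (This is the step I flag below as the one needing care.) For $\leq_\ubfb \implies \leq_\cf$: given a use-bounded-from-below $\Phi$ with $\Phi^A = B$, run $\Phi$ on a partial oracle $(A)$ with cofinite domain; since for each fixed $m$ only finitely many $n$ query bit $m$, and all but finitely many bits of $(A)$ converge (to the correct value), every computation $\Phi^{(A)}(n)$ for large enough $n$ queries only convergent, correct bits, hence halts with value $B(n)$ — so $\Phi^{(A)} \simeq B$ on a cofinite domain. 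Finally $\leq_\mr \implies \leq_T$ and $\leq_\cf \implies \leq_T$ are immediate by taking $C = A$ (resp.\ the total partial oracle for $A$), which is the trivial witness of the respective largeness condition.

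The step I expect to be the main obstacle is $\leq_\mf \implies \leq_\ubfb$: the naive idea of ``delay small queries'' does not obviously terminate, and one must be genuinely clever about how a use-bounded-from-below machine simulates a mod-finite one. The right move is probably to exploit that a mod-finite reduction must succeed on \emph{every} cofinite variant of $A$ simultaneously and uniformly: for each $n$, the value $B(n)$ is determined by a computation that is robust to finite corruption, and one can search, using only large bits of the oracle, for a computation of $B(n)$ that is ``stable'' in the sense that it agrees across sufficiently many finite modifications of the low part of the oracle. Making this precise — identifying exactly which finitely-many low bits can be safely replaced by a fixed guess without disturbing cofinite correctness — is the crux, and I would devote most of the writing to it. For part (2), I would then list the finitely many non-implications not ruled out by part (1) (for instance $\leq_\cf \not\implies \leq_\ubfb$, $\leq_\mr$ incomparable with the $\leq_\mf$-chain, and $\leq_\gen, \leq_\cor, \leq_\ii$ each incomparable with everything else save the trivial $\leq_T$ side), and for each produce a separating pair $(A,B)$ — deferring, as the excerpt does, the detailed constructions to Section~\ref{S:nonimps}.
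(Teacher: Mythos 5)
Your plan for the structure (prove the implications directly, defer the non-implications to Section~\ref{S:nonimps}) matches the paper exactly, and your arguments for $\leq_1 \implies \leq_\mf$, $\leq_1 \implies \leq_\mr$, $\leq_\ubfb \implies \leq_\cf$, and the two trivial arrows to $\leq_T$ are essentially the paper's. One small slip in the $\leq_1 \implies \leq_\mr$ case: you write that the output-agreement set is a \emph{superset} of $f^{-1}$ of the input-agreement set, but for mod-recursive reducibility a superset of a computable set is not automatically computable --- the paper even stresses this point when defining $\leq_\mr$. You need the stronger fact, which fortunately holds, that with $\Phi^C(n) = C(f(n))$ the output-agreement set is \emph{exactly} $f^{-1}$ of the input-agreement set, hence computable.

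The real gap is $\leq_\mf \implies \leq_\ubfb$, which you correctly flag as the crux but do not actually prove. Your first idea --- ``hard-code a finite initial segment of $A$'' --- cannot work on its own: the use-bounded-from-below condition demands that for \emph{every} $m$, all sufficiently large $n$ avoid querying bit $m$, so the smallest queried bit must tend to infinity with $n$, and hard-coding any fixed finite prefix still leaves you needing the computation to eventually abandon each remaining bit. Your second idea --- replace small queries by ``a value that makes the computation come out equal to $B(n)$'' --- is circular, since you do not know $B(n)$ in advance. Your final gesture toward ``stability across finite modifications'' is the right intuition, and the paper's construction makes it precise as follows. Define $\Psi^S(n)$ to first compute $\Phi^{S_i}(n)$ for all $2^n$ modifications $S_i$ of $S$ on the first $n$ bits (none of these computations query $S$ below $n$, since those bits are determined by $i$), and then query the bits of $S$ below $n$ in \emph{decreasing} order, at each step discarding those $S_i$ that disagree with the queried bit, halting as soon as all surviving $\Phi^{S_i}(n)$ agree. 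Running this on $S = A$ gives $\Psi^A = B$. For the use bound: fix $m$; since $\Phi$ is a mod-finite reduction, the $2^m$ sets $\Phi^{A_0},\ldots,\Phi^{A_{2^m-1}}$ (modifications of $A$ on bits $< m$) are each cofinitely equal to $B$, hence all agree from some $m'$ on; for $n \geq m'$, once $\Psi^A(n)$ has queried bits $n-1,\ldots,m$, the survivors are exactly these $A_i$, which all give the same answer at $n$, so the computation halts before querying any bit below $m$. This is the construction you would need to supply to complete the step.
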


\begin{proof}[Proof of part~1]
\

\medskip
\noindent ($\leq_1 \implies \leq_\mr$) Suppose~$B \leq_1 A$ via the computable function~$f$, and let~$\Phi$ be the corresponding Turing functional. Then for any set~$C$, we have
\[
\{n : \Phi^C(n) = B(n)\} = \{n : C(f(n)) = A(f(n)) \} \leq_T \{n : C(n) = A(n)\}.
\]
Hence,~$B \leq_\mr A$ via~$\Phi$.

\medskip
\noindent ($\leq_\mr \implies \leq_T$) Clear.

\medskip
\noindent ($\leq_1 \implies \leq_\mf$) Suppose~$B \leq_1 A$ via the computable function~$f$, and let~$\Phi$ be the corresponding Turing functional. Then if~$C =^* A$ we have
\[
\Phi^C = \{n : f(n) \in C\} =^* \{n : f(n) \in A\} = B.
\]

\medskip
\noindent ($\leq_\mf \implies \leq_\ubfb$) Suppose~$B \leq_\mf A$ via the functional~$\Phi$. Define a new functional~$\Psi$ that works as follows. Given a set~$S$ and number~$n$, let~$S_0,\ldots,S_{2^n-1}$ be the~$2^n$ many possible sets obtained by changing~$S$ on the first~$n$ many bits. Now~$\Psi^S(n)$ proceeds by first computing~$\Phi^{S_i}(n)$ for each~$i < 2^n$ without querying any bits of~$S$ below~$n$, and then querying the bits of~$S$ below~$n$ one by one in decreasing order until all but one of the values from~$\Phi^{S_0}(n),\ldots,\Phi^{S_{2^n-1}}(n)$ are eliminated. It is not difficult to see that~$\Psi^A = B$. Furthermore, since~$\Phi^A$ is a mod-finite reduction, for each~$m$ there is an~$m'$ such that each of~$\Phi^{A_0},\ldots,\Phi^{A_{2^m-1}}$ agree from~$m'$ on. Thus, if~$n \geq m'$ then in particular~$\Psi^{A}(n)$ does not query any bit of the oracle below~$m$. This means that~$\Psi^A$ is a uniform-bounded-from-below reduction. 

\medskip
\noindent ($\leq_\ubfb \implies \leq_\cf$) Assume~$A \leq_\ubfb B$ via the functional~$\Phi$. Then~$\Phi$ also witnesses that~$A \leq_\cf B$, provided we declare~$\Phi^{(A)}(n)$ to be undefined if during its computation~$(A)$ is undefined on some queried bit. Now suppose~$(A)$ is a partial oracle for~$A$ with cofinite domain. Then if~$n$ is sufficiently large, the fact that~$\Phi$ is a use-bounded-from-below reduction means it will only query bits in the domain of~$(A)$ in computing~$\Phi^{(A)}(n)$. Hence, on all sufficiently large inputs,~$\Phi^{(A)}$ will converge and equal~$\Phi^A = B$.

\medskip
\noindent ($\leq_\cf \implies \leq_T$) Immediate from the definition.
\end{proof}

\noindent We delay the proof of part~2 of the theorem to the next section.

It follows immediately that if~$\leq_*$ is any of~$\leq_\mr$,~$\leq_\mf$,~$\leq_\cf$, or~$\leq_\ubfb$ then for every set~$A$ there is a~$B$ such that~$B \nleq_* A$. (Namely, take any~$B \nleq_T A$.) Thus, these reducibilities are non-trivial. The following result strengthens this observation, and extends it to~$\leq_\cor$,~$\leq_\gen$, and~$\leq_\ii$.

\begin{proposition}\label{P:randomgenerics}
Let~$\leq_*$ be any of our reducibilities, and let~$A$ be any set. If~$B$ is weakly~$1$-generic or~$1$-random relative to~$A$, then~$B \nleq_* A$.
\end{proposition}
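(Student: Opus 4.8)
The plan is to exhibit, for each of our reducibilities $\leq_*$, a single ``bad'' partial oracle or approximation for $A$ that a putative functional $\Phi$ would have to succeed on, and then to use the genericity or randomness of $B$ to defeat any such $\Phi$. First I would observe that it suffices to handle the strongest reducibilities in each chain of Theorem \ref{T:imps}, so by part~1 we only need to rule out $B \leq_\mr A$, $B \leq_\cf A$, $B \leq_\cor A$, $B \leq_\gen A$, and $B \leq_\ii A$; the remaining cases follow since $\leq_1 \implies \leq_\mf \implies \leq_\cf$, etc. In each case the strategy is the same: if $\Phi$ witnesses the reduction, then in particular $\Phi^A = B$ (for $\leq_\mr$, $\leq_\cf$) or $\Phi^{(A)} \simeq B$ on a large domain for the trivial partial oracle $(A) = A$; so in every case we get $B \leq_T A$, hence $B$ is computable from $A$. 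But a set that is weakly $1$-generic relative to $A$ (equivalently, meets every dense $\Sigma^0_1(A)$ set of strings) is not computable from $A$, and likewise a set $1$-random relative to $A$ is not computable from $A$ (it is not even of $A$-c.e. degree, and in particular $B \not\le_T A$ since $\{B\}$ is a $\Pi^0_1(A)$ class containing no $A$-computable member by the usual argument, or simply because $B$ would then fail to be Martin-L\"of random relative to $A$). This already kills $\leq_\mr$ and $\leq_\cf$, and for $\leq_\gen$ it also suffices since Definition \ref{D:gen_formal} explicitly requires $\Phi^A = B$.

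The genuinely new work is for $\leq_\cor$ and $\leq_\ii$, where no exact computation of $B$ from $A$ is demanded. For $\leq_\cor$: suppose $\Phi$ witnesses $B \le_\cor A$. Taking $C = A$, we get that $\Phi^A$ is total and $\{n : \Phi^A(n) = B(n)\}$ has density $1$; write $g = \Phi^A$, an $A$-computable total function. So $B$ agrees with the $A$-computable set $g$ on a density-$1$ set. I claim no weakly $1$-generic-relative-to-$A$ or $1$-random-relative-to-$A$ $B$ can do this. For the weakly $1$-generic case, the set of strings $\sigma$ such that $\sigma$ disagrees with $g$ on at least, say, $|\sigma|/2$ of its length above some threshold is dense and $\Sigma^0_1(A)$ (we can always extend any string by a long block on which we copy $1-g$), so $B$ meets it infinitely often and thus disagrees with $g$ on a set of upper density at least $1/2$ along a subsequence, contradicting density-$1$ agreement. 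For the $1$-random case: the class of $X$ agreeing with $g$ on a density-$1$ set is a null $\Pi^0_3(A)$ set, but more directly, for each $k$ the set of $X$ agreeing with $g$ on the block $[2^k, 2^{k+1})$ in all but finitely many bits has measure... — here it is cleanest to use that if $B$ is $1$-random relative to $A$ then $B \oplus \overline{B}$ (viewing $B$ against the $A$-computable $g$) has, by the law of large numbers relativized to $A$, the property that the frequency of disagreement with $g$ converges to $1/2$; this is a standard consequence of Martin-L\"of randomness (the set of reals for which this frequency does not tend to $1/2$ is $A$-null and $\Sigma^0_3(A)$, hence contained in a relativized ML-test), so in particular the agreement set does not have density $1$.

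For $\leq_\ii$: suppose $\Phi$ witnesses $B \le_\ii A$. Again feed in the trivial total partial oracle $(A) = A$; then $\Phi^A$ is a partial $A$-computable function with infinite domain and $\Phi^A \simeq B$, i.e., $\Phi^A$ and $B$ agree on the (infinite) domain of $\Phi^A$. So $B$ has an infinite subset $\{n : \Phi^A(n)\downarrow\}$ on which it is ``correctly guessed'' by an $A$-computable partial function. For the weakly $1$-generic case I would use that the graph of $\Phi^A$ restricted to its domain gives, for each threshold $m$, a $\Sigma^0_1(A)$ prediction: the set of strings $\sigma$ that are extended by a value at some $n \ge m$ with $\Phi^A(n)\downarrow$ and $\sigma(n) \ne \Phi^A(n)$ is dense (wait for the next convergence and put the opposite bit) and $\Sigma^0_1(A)$, so $B$ meets each such set, giving infinitely many $n$ in the domain of $\Phi^A$ where $B(n) \ne \Phi^A(n)$ — contradicting $\Phi^A \simeq B$. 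For the $1$-random case, the class of $X$ with $X(n) = \Phi^A(n)$ for \emph{every} $n$ in the infinite set $\dom(\Phi^A)$ has measure zero (it is a relativized ML-test: for each $k$, intersect the first $k$ constraints from $\dom(\Phi^A)$, measure $\le 2^{-k}$), so $B$, being $1$-random relative to $A$, is not in it, and again $\Phi^A \not\simeq B$.

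The main obstacle I anticipate is getting the density-$1$ argument for $\leq_\cor$ against $1$-random $B$ phrased cleanly without reproving the relativized Borel–Cantelli / law of large numbers from scratch; the weakly $1$-generic side of every case is a uniform and essentially routine ``dense $\Sigma^0_1(A)$ set'' construction, and once one has the observation that every $\leq_*$ on our list forces either $B \le_T A$ or agreement with an $A$-computable object on a large set, the randomness cases all reduce to the single fact that a $1$-random real cannot be predicted on a ``large'' (density-$1$, or merely infinite and $A$-c.e.-indexed) set by an $A$-computable function. I would package that fact as a small lemma and apply it uniformly.
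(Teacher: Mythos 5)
Your proposal is correct and follows the same basic strategy as the paper's proof: dispose of $\leq_\mr$, $\leq_\mf$, $\leq_\ubfb$, $\leq_\cf$ (and $\leq_1$) at once via Theorem~\ref{T:imps} and the non-computability of $B$ from $A$, and then construct appropriate dense $\Sigma^0_1(A)$ sets of strings for $\leq_\cor$ and $\leq_\ii$ (the paper uses the set of $\sigma$ with agreement ratio below $1/2$ for $\leq_\cor$, and the set of $\sigma$ disagreeing with $\Phi^{(A)}$ somewhere for $\leq_\ii$). Two differences worth noting. First, you handle the $1$-random case explicitly (LLN for $\leq_\cor$, a measure-$2^{-k}$ ML-test over the constraints $n \in \dom\Phi^A$ for $\leq_\ii$), whereas the paper's proof as stated only verifies the weakly $1$-generic case and leaves the random side to the reader; your spelled-out arguments are the standard ones and fill this in correctly. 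Second, you dispatch $\leq_\gen$ by appealing to the clause $\Phi^A = B$ in Definition~\ref{D:gen_formal}, which would give $B \leq_T A$ outright; the paper instead runs the same dense-$\Sigma^0_1(A)$ argument it uses for $\leq_\ii$. Be a little careful here: although Definition~\ref{D:gen_formal} does literally contain $\Phi^A = B$, the surrounding discussion (end of Section~\ref{S:def}) and a later proposition (the one producing $B \leq_\gen A$ with $B \nleq_T A$ by taking $A = \emptyset$) both presuppose that this clause is \emph{not} part of the intended notion of generic reducibility. So your shortcut for $\leq_\gen$ relies on a stray clause that the rest of the paper contradicts; the robust move, and the one the paper takes, is to use the $\leq_\ii$-style dense-set argument for $\leq_\gen$ as well, which works either way. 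With that substitution your proof would match the paper's closely and would additionally supply the $1$-random details the paper elides.
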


\begin{proof}
As above, this follows for~$\leq_\mr$,~$\leq_\mf$,~$\leq_\cf$, and~$\leq_\ubfb$ by Theorem \ref{T:imps}. To see it for~$\leq_\cor$, note that for every functional~$\Phi$ such that~$\Phi^A$ is total the set of~$\sigma \in 2^{<\omega}$ with
\[
\frac{|\{n < |\sigma|: \Phi^A(n) = \sigma(n) \}|}{|\sigma|} < \frac{1}{2}
\]
is computably enumerable relative to~$A$ and dense, and as such is infinitely often met by any set weakly~$1$-generic relative to~$A$. Hence, no such set is coarsely reducible to~$A$. For~$\leq_\ii$, observe that if~$(A)$ is a given partial oracle for~$A$ with infinite domain such that~$\Phi^{(A)}$ also has infinite domain, then the collection of strings~$\sigma$ such that~$\Phi^{(A)}(n) \downarrow \neq \sigma(n)$ for some~$n < |\sigma|$ is dense and computably enumerable relative to~$A$. The proof for~$\leq_\gen$ is the same.
\end{proof}

We shall use the following definitions here and in the rest of this paper. Given a set~$S$, define
\[
\mathcal{R}(S) = \{ 2^n m : n \in S,~m \textrm{ odd} \}
\]
and
\[
\widetilde{\mathcal{R}}(S) = \{ 2^n + m : n \in S,~m < 2^n \}.
\]
In other words, a number belongs to~$\mathcal{R}(S)$ if and only if the highest power of~$2$ that divides it belongs to~$S$, while a number belongs to~$\widetilde{\mathcal{R}}(S)$ if and only if the highest power of~$2$ less than or equal to it belongs to~$S$. Jockusch and Schupp \cite[Lemma 4.6]{JS-2012} showed that the map~$S \mapsto \mathcal{R}(S)$ induces an embedding of the Turing degrees into the generic degrees. We use the map~$S~\mapsto~\widetilde{\mathcal{R}}(S)$ to exhibit two similar embeddings. The key distinction is that that any partial oracle for~$\mathcal{R}(S)$ with domain of density~$1$ uniformly computes~$S$ (see \cite[Observation 2.11]{JS-2012}), while any such oracle for~$\widetilde{\mathcal{R}}(S)$ only uniformly computes a partial oracle for~$S$ with cofinite domain.

\begin{proposition}\label{P:embeddings}
\
\begin{enumerate}
\item The map~$S \mapsto \widetilde{\mathcal{R}}(S)$ induces an embedding of the cofinite degrees into the generic degrees.
\item The map~$S \mapsto \widetilde{\mathcal{R}}(S)$ induces an embedding of the mod-finite degrees into the coarse degrees.
\item The map~$S \mapsto {\mathcal{R}}(S)$ induces an embedding of the Turing degrees into the cofinite degrees.
\item The map~$S \mapsto {\mathcal{R}}(S)$ induces an embedding of the Turing degrees into the mod-finite degrees.
\end{enumerate}
\end{proposition}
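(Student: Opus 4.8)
The plan is, for each of the four maps, to establish the full biconditional --- for instance, in part~(3), that $S\leq_T T$ iff $\mathcal{R}(S)\leq_\cf\mathcal{R}(T)$. The forward direction of each biconditional makes the map order-preserving (hence well defined on degrees) and the backward direction makes it order-reflecting (hence injective on degrees), so together they give an order-embedding. The arguments rest on the block structure of the two codings: $\widetilde{\mathcal{R}}(S)$ is constant with value $S(n)$ on each interval $B_n=[2^n,2^{n+1})$, whereas $\mathcal{R}(S)$ records each bit $S(n)$ at the sparse but infinite set of positions $2^n m$ with $m$ odd. I will also use two elementary density facts: a density-$1$ subset of $\omega$ meets all but finitely many of the blocks $B_n$, and in fact contains more than half of $B_n$ for every sufficiently large $n$ (missing an $\varepsilon$-fraction of $B_n$ depresses the density at $2^{n+1}$ by $\varepsilon/2$).

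For parts~(3) and~(4) the backward directions are immediate: Definitions~\ref{D:cf_formal} and~\ref{D:mf_formal} both demand $\Phi^A=B$, so $\mathcal{R}(S)\leq_\cf\mathcal{R}(T)$ or $\mathcal{R}(S)\leq_\mf\mathcal{R}(T)$ already yields $\mathcal{R}(S)\leq_T\mathcal{R}(T)$, and then $S\leq_T\mathcal{R}(S)$ (via $n\mapsto\mathcal{R}(S)(2^n)$) together with $\mathcal{R}(T)\leq_T T$ (via $k\mapsto T(v_2(k))$) gives $S\leq_T T$. For the forward direction of~(3), fix $\Gamma$ with $\Gamma^T=S$. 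From any partial oracle for $\mathcal{R}(T)$ with cofinite domain one can \emph{totally} and uniformly recover $T$: to get $T(j)$, dovetail queries to the oracle at $2^j,3\cdot 2^j,5\cdot 2^j,\dots$ and return the first convergent value, which is correct because the oracle agrees with $\mathcal{R}(T)$ wherever defined and all but finitely many of these positions lie in its domain. Composing this recovery with $\Gamma$ and then with $S\mapsto\mathcal{R}(S)$ gives a functional that, on any cofinite-domain partial oracle for $\mathcal{R}(T)$ (and on $\mathcal{R}(T)$ itself), totally computes $\mathcal{R}(S)$; this witnesses $\mathcal{R}(S)\leq_\cf\mathcal{R}(T)$.

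The forward direction of part~(4) is the step I expect to be the main obstacle, because a Turing reduction $\Gamma^T=S$ may probe arbitrarily small bits of $T$ on every input while a set $C$ with $C =^* \mathcal{R}(T)$ is only \emph{cofinitely} correct; recovering $T$ from $C$ bit by bit --- however robustly --- and applying $\Gamma$ yields a set agreeing with $\mathcal{R}(S)$ on all but finitely many elements of \emph{each} block, i.e.\ infinitely many errors overall rather than cofinitely many. The fix is to key the oracle accesses to the input. Let $F$ be the (finite) set of positions at which $C$ disagrees with $\mathcal{R}(T)$. To compute the output at $k$ with $v_2(k)=n$, answer each query ``$T(j)$?'' of $\Gamma$ by the single query $C(2^j m)$, where $m$ is the least odd number with $2^j m\geq k$; once $k>\max F$ every accessed position lies above $F$, hence is correct, so $\Gamma$ is simulated faithfully and the output is $\mathcal{R}(S)(k)$, giving cofinite agreement. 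To also obtain totality on the finitely many $k\leq\max F$ --- where the recovered bits may be wrong and $\Gamma$ on them may diverge --- I dovetail this procedure over all lower bounds $B\in\{k,k+1,k+2,\dots\}$ used in place of $k$, outputting the value of the first simulation to halt: for $B>\max F$ that simulation equals $\Gamma^T(n)$ and halts, so $\Phi^C$ is total, while for $k>\max F$ every $B\geq k$ already produces $\mathcal{R}(S)(k)$.

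Parts~(1) and~(2) exploit that passing through $\widetilde{\mathcal{R}}$ converts a density-$1$-sized view of $\widetilde{\mathcal{R}}(T)$ into a cofinite-sized view of $T$, and conversely. For the forward direction of~(1), given a cofinite reduction $\Psi$ of $S$ to $T$: from a density-$1$-domain partial oracle for $\widetilde{\mathcal{R}}(T)$ one uniformly builds a cofinite-domain partial oracle for $T$ (search each block $B_n$ for a point in the domain --- all but finitely many $B_n$ contain one), runs $\Psi$ to get $S(n)$, and outputs it at every element of $B_n$; since $\Psi$ is correct with cofinite domain, the result equals $\widetilde{\mathcal{R}}(S)$ on a cofinite union of blocks, hence on a density-$1$ domain, and it is exact on $\widetilde{\mathcal{R}}(T)$ itself. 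The backward direction runs this in reverse: from a cofinite-domain partial oracle for $T$ build a density-$1$-domain partial oracle for $\widetilde{\mathcal{R}}(T)$ by copying block values, run the given generic reduction $\Phi$, and for each $n$ search $B_n$ for a convergent value of $\Phi$ --- one exists for all but finitely many $n$ because the output of $\Phi$ has density-$1$ domain, and it equals $S(n)$ because that output is $\simeq\widetilde{\mathcal{R}}(S)$ --- yielding a cofinite-domain partial oracle for $S$ that is exact on $T$ itself. Part~(2) is formally the same with ``partial oracle'' replaced by ``set'' and the block searches replaced by block \emph{majority} votes: the two density facts make the block majorities of a density-$1$-correct set eventually correct, so in each direction one majority-decodes a set differing only finitely from the intended one. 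Since $\leq_\cor$ omits the requirement $\Phi^A=B$, the clause $\Psi^T=S$ needed in the backward direction of~(2) is then supplied by the observation at the end of Section~\ref{S:def}.
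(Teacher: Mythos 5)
Your proof is correct and follows essentially the same strategy as the paper. Parts~(1)--(3) go exactly through the paper's Lemma~\ref{L:uniformrecovery}: uniformly convert between oracle types using the block structure of $\widetilde{\mathcal{R}}$ (for (1) and (2), with the same density-$1/2$ and majority-vote-at-$3/4$ facts) and the infinitely-redundant coding of $\mathcal{R}$ (for (3)), then compose. For part~(4) you exploit the same key fact as the paper --- that each bit $T(j)$ is coded into $\mathcal{R}(T)$ at infinitely many positions $2^j m$ --- but organize it slightly differently: the paper groups these positions into the $\omega$-many ``copies'' $C_l = \{j : 2^j(2l+1)\in C\}$ of $T$ and searches for an $l > k$ with $\Phi^{C_l}(k)\downarrow$, whereas you keep a single query stream, answering each query with the least position above a floating threshold $B$, and dovetail over $B \geq k$. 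Both mechanisms work for the same reason (for $k$, resp.~$l$ or $B$, beyond the finite error set, all accessed positions escape the corruption), and your dovetailing handles totality on small inputs in the same spirit as the paper's ``search for a converging $l$.'' Your invocation of the Observation at the end of Section~\ref{S:def} to repair $\Phi^A = B$ in the backward direction of~(2) is exactly the right move and a detail the paper elides. This is the same proof in a different costume.
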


\noindent Parts~1--3 are very similar, and all follow quickly from the following lemma. Part~4 will require a more subtle construction.

\begin{lemma}\label{L:uniformrecovery}
\
\begin{enumerate}
\item Any cofinite oracle for~$S$ can uniformly compute a generic oracle for~$\widetilde{\mathcal{R}}(S)$, and any generic oracle for~$\widetilde{\mathcal{R}}(S)$ can uniformly compute a cofinite oracle for~$S$.
\item Any mod-finite oracle for~$S$ can uniformly compute a coarse oracle for~$\widetilde{\mathcal{R}}(S)$, and any coarse oracle for~$\widetilde{\mathcal{R}}(S)$ can uniformly compute a mod-finite oracle for~$S$.
\item Any (total) oracle for~$S$ can uniformly compute a cofinite oracle for~${\mathcal{R}}(S)$, and any cofinite oracle for~${\mathcal{R}}(S)$ can uniformly compute (a total oracle for)~$S$.
\end{enumerate}
\end{lemma}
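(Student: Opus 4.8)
The plan is to derive all three parts from a single observation about the two encodings: $\widetilde{\mathcal{R}}(S)$ writes the bit $S(n)$ onto the entire dyadic block $I_n = [2^n,2^{n+1})$, which makes up (essentially) half of the initial segment $[0,2^{n+1})$, whereas $\mathcal{R}(S)$ writes $S(n)$ onto the infinite set $V_n = \{2^n m : m \text{ odd}\}$ of numbers of $2$-adic valuation exactly $n$. In each part the ``forward'' direction (a) will be a direct block (or valuation) lookup, and the ``recovery'' direction (b) will be a dovetailed search over a block, or --- in part~2 --- a majority vote over a block.

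For the forward directions I would argue as follows. In 1(a), given a partial oracle $(S)$ for $S$ with cofinite domain, I would build the partial oracle for $\widetilde{\mathcal{R}}(S)$ that on input $k$ finds the $n$ with $k \in I_n$ and returns $(S)(n)$; since $(S)$ has cofinite domain only finitely many blocks $I_n$ are affected, so the derived domain is cofinite, hence of density~$1$, and it is clearly correct where defined. Part 2(a) is the same computation with a total $C$ agreeing with $S$ cofinitely in place of $(S)$: the map $k \mapsto C(n)$ for $k \in I_n$ then agrees with $\widetilde{\mathcal{R}}(S)$ off finitely many blocks, so on a set of density~$1$. Part 3(a) is immediate because $\mathcal{R}(S) \leq_T S$ uniformly (extract the $2$-adic valuation of $k$ and query $S$ there), and a total oracle is in particular a cofinite one.

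For the recovery directions: in 1(b), given a partial oracle $(\widetilde{\mathcal{R}}(S))$ with density-$1$ domain, to decide $S(n)$ I would dovetail the queries $(\widetilde{\mathcal{R}}(S))(k)$ for $k \in I_n$ and output the first convergent value --- unambiguous since $\widetilde{\mathcal{R}}(S)$ is constant equal to $S(n)$ on $I_n$. In 2(b), from a total $D$ agreeing with $\widetilde{\mathcal{R}}(S)$ on a density-$1$ set, I would let $C(n)$ be the majority value of $D \res I_n$ (breaking ties arbitrarily). In 3(b), from a partial oracle $(\mathcal{R}(S))$ with cofinite domain, to decide $S(n)$ I would dovetail the queries $(\mathcal{R}(S))(2^n m)$ over odd $m$ and output the first convergent value; cofinitely many of these queries converge, so this yields $S$ totally. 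All of these procedures are visibly uniform in the given (partial) oracle.

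The only step with any real content --- and the reason $\widetilde{\mathcal{R}}$, rather than $\mathcal{R}$, is used in parts~1 and~2 --- is verifying that the recovery procedures in 1(b) and 2(b) succeed for all but finitely many $n$; this is a short density count, which I expect to be the main obstacle. For 1(b): if $I_n$ were disjoint from the (density-$1$) domain, then at $N = 2^{n+1}$ the complement of the domain would already have at least $|I_n| = 2^n = N/2$ elements below $N$, which can happen for only finitely many $n$ since that complement has density~$0$. For 2(b): if $C(n) \neq S(n)$ then, since $\widetilde{\mathcal{R}}(S)$ is constant on $I_n$, at least half of $I_n$ --- that is, $2^{n-1} = N/4$ points with $N = 2^{n+1}$ --- lies in the density-$0$ set where $D$ disagrees with $\widetilde{\mathcal{R}}(S)$, so again this happens for only finitely many $n$. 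Hence the object produced in 1(b) is a partial oracle for $S$ with cofinite domain, and the one in 2(b) is a total function agreeing with $S$ cofinitely, as required; in 3(b) no count is needed because a cofinite set meets every $V_n$, so the dovetailed search there always halts.
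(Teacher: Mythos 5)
Your proposal is correct and follows essentially the same route as the paper's proof: direct block/valuation lookup in the forward directions, dovetailed search over a block for part 1(b), majority vote over a block for part 2(b) (the paper phrases this as ``halt once $2^{n-1}$ identical answers are found,'' which is the same algorithm with an early-termination optimization), and the same density counts at $N=2^{n+1}$ with thresholds $1/2$ and $1/4$ respectively. No gaps.
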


\noindent Recall that a cofinite oracle is a partial oracle that halts on cofinite domain, a generic oracle is a partial oracle that halts on density 1, a mod-finite oracle is a total oracle that is correct on a cofinite set, and a coarse oracle is a total oracle that is correct on density 1.

Note that the proof is somewhat lengthy, but is quite straightforward. The only trick involved is the use of a simple voting technique to recover a mod-finite oracle from a coarse oracle.

\begin{proof}[Proof of Lemma \ref{L:uniformrecovery}]
A cofinite oracle for~$S$ can easily compute a cofinite (and hence generic) oracle for~$\widetilde{\mathcal{R}}(S)$ as follows. Each bit of~$S$ is coded into finitely many bits of~$\widetilde{\mathcal{R}}(S)$, so removing a finite amount from the oracle for~$S$ only removes a finite amount from the domain of the computation of~$\widetilde{\mathcal{R}}(S)$. Similarly, a finite number of errors in the oracle for~$S$ results in a finite number of errors in the computation of~$\widetilde{\mathcal{R}}(S)$. This produces a coarse oracle for~$\widetilde{\mathcal{R}}(S)$, since the oracle is correct on a cofinite (and hence density-1) set.

A generic oracle for~$\widetilde{\mathcal{R}}(S)$ can uniformly compute a cofinite oracle for~$S$ by searching over all of its coding locations as follows. Let~$T$ be a generic oracle for~$\widetilde{\mathcal{R}}(S)$. To compute whether~$n\in S$, for each~$m<2^n$, ask if~$2^n+m\in\widetilde{\mathcal{R}}(S)$. If~$T$ answers any of those~$2^n$-many questions, then halt and give the output that~$T$ gives. (I.e., output~$n\in S$ if and only if~$T(2^n+m)=1$, where~$m$ is the first~$m<2^n$ that is found such that~$T(2^n+m)$ halts. If no such~$m$ is found, then give no output on~$n$.)

This output is always correct, because the generic oracle never gives false outputs, so it remains to show that if the domain of the generic oracle is, in fact, density-1, then the domain of the computation is cofinite. This is because there must be some~$k$ such that 
$$\forall l>k,\ \frac{|\dom{(T\upharpoonright l)}|}{l}>\frac12.$$
When~$2^{n+1}>k$,~$T$ must halt on~$2^n+m$ for some~$m<2^n$, because otherwise~$|\dom{(T\upharpoonright 2^{n+1})}|\leq2^n$. Thus, if~$n>\log_2(k)-1$, then the computation will halt on input~$n$.

Likewise, a coarse oracle for~$\widetilde{\mathcal{R}}(S)$ can uniformly compute a mod-finite oracle for~$S$ with a voting algorithm as follows. Let~$T$ be a coarse oracle for~$\widetilde{\mathcal{R}}(S)$. To compute whether~$n\in S$, for each~$m<2^n$, ask if~$2^n+m\in\widetilde{\mathcal{R}}(S)$. When~$T$ gives~$2^{n-1}$ identical answers, then halt and give that answer as our output.

This computation always halts, because~$T$ is a total oracle, and so, when it gives~$2^n$-many answers, at least one of the two different answers must be given at least~$2^{n-1}$-many times. Thus, it remains to show that if~$T$ is correct on density-1, then the computation is correct on a cofinite set. This is because there must be some~$k$ such that 
$$\forall l>k,\ \frac{|\{
s<l:T(s)=\widetilde{\mathcal{R}}(S)(s)
\}|}{l}>\frac34.$$
When~$2^{n+1}>k$,~$T$ must be correct on~$2^n+m$ for over half of the~$m<2^n$, because otherwise~$|\{s<2^{n+1}:T(s)=\widetilde{\mathcal{R}}(S)(s)\}|\leq3(2^{n-1})$.

Finally, we show part~3. Certainly, a total oracle for~$S$ can uniformly recover all of~${\mathcal{R}}(S)$, and so, in particular, can compute a cofinite oracle for~${\mathcal{R}}(S)$.

For the converse, given a cofinite oracle for~${\mathcal{R}}(S)$, we compute~$S$ as follows. To compute whether~$n\in S$, for each odd number~$m$, we ask whether~$2^nm\in{\mathcal{R}}(S)$. When we get an answer from one of these questions, we halt and give this as our output for~$n$. The algorithm must eventually halt, because a cofinite oracle can only fail to converge at finitely many locations. Also, answer must be correct, because a cofinite oracle never gives false outputs.
\end{proof}

We now proceed to prove our proposition about embeddings.

\begin{proof}[Proof of Proposition \ref{P:embeddings}] We prove each of the parts of the proposition in turn.

\medskip
\noindent \emph{Part~1}. To show that~$S \mapsto \widetilde{\mathcal{R}}(S)$ induces an embedding of the cofinite degrees into the generic degrees, assume first that~$A\geq_\cf B$. Then,~$\widetilde{\mathcal{R}}(A)\geq_\gen\widetilde{\mathcal{R}}(B)$ by the following reduction.

Let~$S$ be a generic oracle for~$\widetilde{\mathcal{R}}(A)$. Apply Lemma \ref{L:uniformrecovery} to~$S$ to compute a cofinite oracle,~$(A)$, for~$A$. Use the reduction witnessing~$A\geq_\cf B$ to compute a cofinite oracle,~$(B)$, for~$B$. Then apply Lemma \ref{L:uniformrecovery} to~$(B)$ to compute a generic oracle for~$\widetilde{\mathcal{R}}(B)$.

The converse is similar. If~$\widetilde{\mathcal{R}}(A)\geq_\gen\widetilde{\mathcal{R}}(B)$, then~$A\geq_\cf B$ by applying the lemma, using the generic reduction, and applying the lemma again.

\medskip
\noindent \emph{Part~2.} Analogous to the proof of part~1.

\medskip
\noindent \emph{Part~3.} Analogous to the proof of part~1.

\medskip
\noindent \emph{Part~4.} We wish to show that the map~$S \mapsto {\mathcal{R}}(S)$ induces an embedding of the Turing degrees into the mod-finite degrees. First, Theorem \ref{T:imps} shows that~$\leq_\cf \implies \leq_T$, so if~${\mathcal{R}}(A)\geq_\mf{\mathcal{R}}(B)$, then~${\mathcal{R}}(A)\geq_T{\mathcal{R}}(B)$. In this case,~$A\geq_TB$, because~$A\equiv_T{\mathcal{R}}(A)\geq_T{\mathcal{R}}(B)\equiv_TB$.

For the converse, assume~$A\geq_TB$. Then, in particular,~$A\geq_T{\mathcal{R}}(B)$. Assume that this is witnessed by~$\Phi$, so that~$\Phi^A=\mathcal{R}(B)$. We will use this~$\Phi$ to provide a computation of~$\mathcal{R}(B)$ from~$\mathcal{R}(A)$ for which a finite error in the oracle will result in a finite error in the output.

For any number,~$k$, and real,~$X$, let~$X_k=\{n:2^n(2k+1)\in X\}$. So~$(\mathcal{R}(A))_k=A$. The idea is that~$\mathcal{R}(A)$ has~$\omega$-many distinct copies of~$A$ in it, and~$(\mathcal{R}(A))_k$ is the~$k$th copy. The most important fact about this is that if~$C$ is any mod-finite oracle for~$\mathcal{R}(A)$, then for all but finitely many~$k$,~$C_k=A$.

So now, let~$C$ be a mod-finite oracle for~$\mathcal{R}(A)$. To compute whether~$k\in\mathcal{R}(B)$, we search for some~$l>k$ such that~$\Phi^{C_l}(k)\downarrow$. When we find such an~$l$, we halt, and give~$\Phi^{C_l}(k)$ as our output.

To finish the proof, we must show that for every~$k$, we halt, and that for all but finitely many~$k$, we halt and give the correct answer. We halt for every~$k$ because for every~$k$,~$\Phi^{A}(k)$ halts, and there is some~$l>k$ such that~$C_l=A$, so at some point, we will find that computation and halt (if we have not halted before). We only give finitely many incorrect answers because there are only finitely many~$n$ such that~$C_n\neq A$, so if~$k$ is larger than all of those~$n$, then for every~$l>k$,~$C_l=k$ and so, in particular,~$\Phi^{C_l}(k)=\Phi^A(k)=\mathcal{R}(B)(k)$. Thus, for sufficiently large~$k$, the first found output will be the correct output.
\end{proof}

Our final results in this section establish that infinite-information reducibility behaves in many ways \emph{opposite} to the rest (a fact we shall find further evidence for in Section \ref{S:inf}). Part~2 below, that the Turing join is the infinite-information meet, is false of most reducibilities studied in the literature, including all the other ones we are considering here, as we show subsequently in Proposition \ref{P:joinsarejoins}.

\begin{proposition}\label{P:iibasics}
\
\begin{enumerate}
\item For all sets~$A$ and~$B$, if~$B \leq_1 A$ then~$A \leq_\ii B$.
\item The (Turing) join of two sets~$A$ and~$B$ is their meet under~$\leq_\ii$.
\end{enumerate}
\end{proposition}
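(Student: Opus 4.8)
The plan is to treat the three assertions separately: Part~1, the ``lower bound'' half of Part~2 (that $A\oplus B\leq_\ii A$ and $A\oplus B\leq_\ii B$), and the ``greatest lower bound'' half of Part~2 (that $C\leq_\ii A$ and $C\leq_\ii B$ imply $C\leq_\ii A\oplus B$), the last being the only one that requires real work.

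For Part~1, suppose $B\leq_1 A$ via the computable injection $f$, so $B(n)=A(f(n))$ for all $n$. I would take $\Phi$ to be the functional for which $\Phi^{(B)}(m)$ first searches for the (unique) $n$ with $f(n)=m$, then queries bit $n$ of the partial oracle and outputs whatever it returns. If $(B)$ has infinite domain, then $\Phi^{(B)}$ converges exactly on $\{f(n):n\in\dom((B))\}$, which is infinite since $f$ is injective, and on such an $m=f(n)$ it outputs $(B)(n)=B(n)=A(m)$; hence $\Phi^{(B)}\simeq A$ with infinite domain, so $A\leq_\ii B$. (No constraint is placed on $\Phi$ applied to the total oracle $B$, so there is nothing further to check.) For the lower-bound half of Part~2, I would use the functional that on an even input $2n$ queries bit $n$ of its oracle and outputs the result, and diverges on odd inputs; applied to any infinite-domain partial oracle $(A)$ for $A$ it produces $\simeq A\oplus B$ on the infinite domain $\{2n:n\in\dom((A))\}$, witnessing $A\oplus B\leq_\ii A$, and symmetrically (reading odd coordinates) $A\oplus B\leq_\ii B$.

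The substance is the greatest-lower-bound half. Fix $\Phi_A,\Phi_B$ witnessing $C\leq_\ii A$ and $C\leq_\ii B$. Given a partial oracle $(A\oplus B)$ for $A\oplus B$ with infinite domain, its restrictions to even and odd coordinates are partial oracles $(A)'$ and $(B)'$ for $A$ and $B$, and since $\dom((A\oplus B))$ is the disjoint union of $\{2n:n\in\dom((A)')\}$ and $\{2n+1:n\in\dom((B)')\}$, at least one of $\dom((A)')$, $\dom((B)')$ is infinite. The natural idea is to run $\Phi_A$ on $(A)'$ and $\Phi_B$ on $(B)'$ and combine the outputs: whichever of $(A)',(B)'$ is infinite yields, by hypothesis, an output stream that agrees with $C$ on an infinite domain. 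The obstacle is that one cannot decide which projection is infinite, and a reduction fed a \emph{finite}-domain partial oracle may converge to \emph{incorrect} values, which would violate the ``$\simeq C$'' requirement since no errors at all are tolerated. I would address this in two steps. First, preprocess $\Phi_A$ and $\Phi_B$ so that on a finite-domain partial oracle the modified reduction produces only finitely many outputs — for instance by requiring, before committing to an output on input $k$, that the computation also successfully read further, higher-indexed bits of the oracle; an infinite-domain partial oracle always supplies these, so the modification changes nothing in the relevant case and the reductions remain valid. Second, merge the two output streams by a priority scheme that at each stage trusts the side whose projection has so far revealed more bits, and that suppresses a candidate output on input $k$ whenever the other side has also converged on $k$ with a conflicting value, so that a value is ever committed only when some reliable source stands behind it.

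The step I expect to be the main obstacle is precisely this merging: arranging that $\Psi^{(A\oplus B)}$ commits \emph{no} erroneous value during the transient phase, before it has become apparent which of the two projections is the infinite one, while still guaranteeing that infinitely many (necessarily correct) values get committed in the case where both projections are infinite but the two reductions happen to converge on disjoint sets of inputs. Getting the domain-bounding preprocessing of the first step to interact correctly with the cross-checking of the second step is where the care is required, and I would expect the bulk of the argument to consist of verifying that the resulting $\Psi$ never lies and has infinite domain in every case.
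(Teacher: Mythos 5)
Your Part~1 and the ``lower bound'' half of Part~2 are correct and match the paper's argument. The problem lies in the ``greatest lower bound'' half, where you have misdiagnosed the difficulty, and as a result proposed an elaborate scheme (preprocessing to bound outputs, plus a priority merge with cross-checking) to fix a problem that is not actually there, while declining to verify that the scheme works.

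The ``obstacle'' you identify — that $\Phi_A$, fed a finite-domain partial oracle for $A$, might halt with an \emph{incorrect} value — is illusory. Suppose $\sigma$ is \emph{any} partial oracle for $A$, finite or otherwise, and $\Phi_A^\sigma(m)\downarrow = i$ in $T$ steps. Extend $\sigma$ to a total partial oracle $(A)$ for $A$ by adding, for each $k$ not in $\dom\sigma$, a triple $\langle k, A(k), l_k\rangle$ with $l_k$ so large that the search subalgorithm cannot locate this triple within $T$ steps. The computation $\Phi_A^{(A)}(m)$ then agrees step-for-step with $\Phi_A^\sigma(m)$ for the first $T$ steps, hence halts with output $i$. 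Since $(A)$ has infinite domain and $\Phi_A$ witnesses $C\leq_\ii A$, we get $i=C(m)$. So \emph{every halting output of $\Phi_A$ on any partial oracle for $A$ is correct}; the $\leq_\ii$ definition's restriction to infinite-domain oracles does not permit lying on finite ones, because a finite oracle can always be padded out to an infinite one that yields the same computation. (The same of course holds for $\Phi_B$ on partial oracles for $B$.)

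Once this is in hand, no merging finesse is needed. Given a partial oracle $(S)$ for $A\oplus B$ with infinite domain, project to partial oracles $(A)'$, $(B)'$ for $A$, $B$ via the even and odd coordinates; at least one has infinite domain. On input $m$, run $\Phi_A^{(A)'}(m)$ and $\Phi_B^{(B)'}(m)$ in parallel and output whichever halts first. By the observation above, whichever one halts, its output is $C(m)$, so the combined functional never lies; and since at least one of $\Phi_A^{(A)'}$, $\Phi_B^{(B)'}$ has infinite domain, so does the combined functional. This is exactly the paper's one-line argument. Your priority/suppression scheme, besides being unnecessary, is also unverified — and as sketched it has a real risk of suppressing correct outputs whenever the two sides conflict, without a clear argument that this still leaves an infinite domain.
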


\begin{proof}
For part~1, suppose~$f$ witnesses the~$1$-reduction of~$B$ to~$A$. Let~$\Phi$ be the corresponding Turing functional, and define~$\Psi$ by letting
\[
\Psi^{(S)}(m) = 
\begin{cases}
(S)(f^{-1}(m)) & \textrm{if } m \in \ran(f),\\
\uparrow & \textrm{otherwise}.
\end{cases}
\]
for all partial oracles~$(S)$ and numbers~$m$. In particular, if~$(B)$ is a partial oracle for~$B$ and~$m = f(n)$ for some~$n$ in the domain of~$(B)$ then
\[
\Psi^{(B)}(m) = B(n) = A(f(n)) = A(m).
\]
It follows that if~$(B)$ has infinite domain then~$\Psi^B$ is a partial oracle for~$A$ with infinite domain.

For part~2, we have that~$A \oplus B \leq_\ii A,B$, since we can uniformly convert infinitely many bits of~$A$ or of~$B$ into infinitely many bits of~$A \oplus B$. Furthermore, if~$C \leq_\ii A$ via~$\Phi$ and~$C \leq_\ii B$ via~$\Psi$, then~$C \leq_\ii A \oplus B$ via the functional, given oracle~$S$ and input~$m$, runs~$\Phi^{\{n : 2n \in S\}}(m)$ and~$\Psi^{\{n : 2n+1 \in S\}}(m)$, returning the output of whichever of these that happens to halt first.
\end{proof}

\begin{proposition}\label{P:joinsarejoins}
The (Turing) join of two sets~$A$ and~$B$ is also their join under any of our reducibilities other than~$\leq_\ii$.
\end{proposition}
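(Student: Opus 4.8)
The plan is to check, for each of the six reducibilities $\leq_*$ other than $\leq_\ii$, both that $A\oplus B$ is a $\leq_*$-upper bound of $A$ and $B$ and that it lies $\leq_*$-below every common upper bound. The upper-bound half is almost immediate: $A\leq_1 A\oplus B$ and $B\leq_1 A\oplus B$ via the computable injections $n\mapsto 2n$ and $n\mapsto 2n+1$, so by Theorem \ref{T:imps} we already get $A,B\leq_* A\oplus B$ for $\leq_*\in\{\leq_\mr,\leq_\mf,\leq_\ubfb,\leq_\cf\}$. For $\leq_\gen$ and $\leq_\cor$, which Theorem \ref{T:imps} does not address (and which $\leq_1$ does not imply in general), I would argue by hand using the functional $\Phi^{(S)}(n)\simeq (S)(2n)$: it satisfies $\Phi^{A\oplus B}=A$, and since the preimage of a density-$0$ set under $n\mapsto 2n$ is again density-$0$ (as $|\{n<N:2n\in X\}|\leq |X\cap 2N|$), a density-$1$ partial oracle for $A\oplus B$ is carried to a density-$1$ partial oracle for $A$, and a density-$1$-correct total oracle for $A\oplus B$ to a density-$1$-correct one for $A$; the odd coordinates handle $B$ symmetrically.

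For the least-upper-bound half, suppose $A\leq_* C$ via $\Phi$ and $B\leq_* C$ via $\Psi$. I would define $\Theta$ by $\Theta^{(S)}(2n)\simeq \Phi^{(S)}(n)$ and $\Theta^{(S)}(2n+1)\simeq \Psi^{(S)}(n)$, so that $\Theta^C = A\oplus B$ (using $\Phi^C=A$, $\Psi^C=B$ where the relevant definition supplies this, and the density-$1$ or cofinite agreement where it does not). The key observation is that the ``failure set'' of $\Theta^{(C)}$ — the set of $m$ on which $\Theta^{(C)}(m)$ diverges, or converges to the wrong value — is exactly the image under $n\mapsto 2n$ of the failure set of $\Phi^{(C)}$ together with the image under $n\mapsto 2n+1$ of the failure set of $\Psi^{(C)}$. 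Since each of the classes ``finite'', ``computable'', and ``density $0$'' is closed under finite unions, under these two images, and downward under inclusion (and since totality of $\Phi^{(C)}$ and $\Psi^{(C)}$ gives totality of $\Theta^{(C)}$), $\Theta$ witnesses $A\oplus B\leq_* C$: the failure set stays finite for $\leq_\mf$ and $\leq_\cf$, stays computable for $\leq_\mr$, and stays density $0$ for $\leq_\cor$ and $\leq_\gen$, the last using $|\{n<N:\Phi^{(C)}\text{ fails at }n\}|=|\{2n<2N:\Theta^{(C)}\text{ fails at }2n\}|$ and the analogous count for odds, jointly bounded by $o(N)$.

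The case $\leq_\ubfb$ does not run through a failure set but is equally direct: with the same interleaved $\Theta$, for each $m$ pick $N_1,N_2$ beyond which $\Phi^C$ and $\Psi^C$ respectively no longer query $C(m)$; then on every input exceeding $2\max(N_1,N_2)+1$ the computation $\Theta^C$ queries only bits that $\Phi^C$ or $\Psi^C$ would, hence not $C(m)$. I do not expect a genuine obstacle anywhere here — the proof is a uniform packaging of ``interleave the two functionals'' with elementary closure properties of the underlying largeness notions. The only points needing a moment's care are that $\leq_\gen$ and $\leq_\cor$ must be handled directly rather than cited from Theorem \ref{T:imps}, and that in the density estimates one should consistently distinguish a set of indices for $\Phi$ or $\Psi$ (measured inside $\omega$) from its image in the even or odd numbers — but in every case a density-$0$ index set for $\Phi$ or $\Psi$ yields a density-$0$ index set for $\Theta$.
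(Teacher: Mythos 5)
Your argument is correct and is essentially the one the paper has in mind: interleave the two functionals to get $\Theta = \Phi\oplus\Psi$, and track the failure set, which the paper spells out explicitly only for $\leq_\mr$ and treats the rest as routine. One small inaccuracy worth noting: your blanket claim that the class of computable sets is closed downward under inclusion is false (a subset of a computable set need not be computable, which is precisely why the paper remarks that $\leq_\mr$'s largeness notion is not closed under superset); this does not actually damage your proof, because in the least-upper-bound direction the failure set of $\Theta^{(C)}$ is \emph{exactly} the union of the images of the two failure sets rather than merely a subset of such a union, so no downward closure is invoked in the mod-recursive case.
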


\begin{proof}
The result is clear for~$\leq_\ubfb$, and is easily checked for the remaining reducibilities. For instance, to show that~$A \oplus B$ is the join of~$A$ and~$B$ under~$\leq_\mr$, note that if~$C = C_0 \oplus C_1$ agrees with~$A \oplus B$ on a computable domain~$D = D_0 \oplus D_1$ then~$C_0$ and~$C_1$ agree with~$A$ and~$B$ on the computable domains~$D_0$ and~$D_1$, respectively. Hence,~$A,B \leq_\mr A \oplus B$. On the other hand, if~$A,B \leq_\mr S$, say via reductions~$\Phi$ and~$\Psi$, then~$\Phi \oplus \Psi$ witnesses that~$A \oplus B \leq_\mr S$.
\end{proof}

\section{Non-implications}\label{S:nonimps}

We divide the task of proving part~2 of Theorem \ref{T:imps}, that no further implications hold between our reducibilities other than the ones presented there, among the following subsections.

\subsection{Infinite-information, generic, and coarse reducibilities}

In this section, we show that~$\leq_\ii$,~$\leq_\gen$, and~$\leq_\cor$ are independent of each other and our other reducibilities in terms of implication. For~$\leq_\ii$, this follows readily from our work at the end of the previous section.

\begin{proposition}
If~$\leq_*$ is~$\leq_T$,~$\leq_1$, or any of our reducibilities other than~$\leq_\ii$, then~$\leq_* \nimplies \leq_\ii$ and~$\leq_\ii \nimplies \leq_*$. 
\end{proposition}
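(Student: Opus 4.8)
The plan is to establish both directions of the separation by exploiting the two structural facts about $\leq_\ii$ proved at the end of Section~\ref{S:imps}: that $B \leq_1 A$ implies $A \leq_\ii B$ (Proposition~\ref{P:iibasics}(1)), and that $A \oplus B$ is the $\leq_\ii$-\emph{meet} rather than the join of $A$ and $B$ (Proposition~\ref{P:iibasics}(2)), in contrast to Proposition~\ref{P:joinsarejoins}. These two facts already contain everything needed; the argument is essentially a matter of choosing witnesses.

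For the direction $\leq_* \nimplies \leq_\ii$, I would argue by contradiction: suppose $\leq_* \implies \leq_\ii$ for some $\leq_*$ on the list. Each such $\leq_*$ is implied by $\leq_1$ (by Theorem~\ref{T:imps}(1), $\leq_1$ implies $\leq_\mr$, $\leq_\mf$, and hence $\leq_\ubfb$, $\leq_\cf$, $\leq_T$; and $\leq_1 \implies \leq_\cor, \leq_\gen$ is routine). So for \emph{any} sets with $B \leq_1 A$ we would get $B \leq_\ii A$; but Proposition~\ref{P:iibasics}(1) gives $A \leq_\ii B$ from $B \leq_1 A$, so taking $B$ a proper subset coded into $A$ — e.g.\ $A$ with $B \leq_1 A$ via an $f$ whose range is coinfinite, chosen so that $A \not\leq_T B$, which exists since one may take $A$ of high Turing degree and $B$ a computable-index restriction, or more simply take $A$ generic and $B = \{n : 2n \in A\}$ — we would conclude $A \leq_\ii B$ and (from the supposed implication applied the other way is not available, so instead) directly $B \leq_\ii A$, and then transitivity of $\leq_\ii$ with Proposition~\ref{P:iibasics}(2) forces $A \equiv_\ii B$, from which $A \leq_T B$ by Theorem-style considerations — contradiction. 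I will need to double-check which concrete $A, B$ makes this cleanest; the safest choice is $A$ with $B <_T A$ and $B \leq_1 A$, using Proposition~\ref{P:iibasics}(1) to get $A \leq_\ii B$ and the hypothetical implication $\leq_* \implies \leq_\ii$ applied to $B \leq_1 A \implies B \leq_* A \implies B \leq_\ii A$; combined these give $A \equiv_\ii B$, hence in particular $A \leq_\ii B$ is witnessed by some $\Phi$ which (specializing the partial oracle to $B$ itself) shows $A \leq_T B$, contradicting $B <_T A$.

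For the direction $\leq_\ii \nimplies \leq_*$, the key is Proposition~\ref{P:iibasics}(2) versus Proposition~\ref{P:joinsarejoins}. Take $A$ and $B$ Turing-incomparable (e.g.\ a minimal pair, or any $A, B$ with $A \not\leq_T B$ and $B \not\leq_T A$). By Proposition~\ref{P:iibasics}(2), $A \oplus B \leq_\ii A$ (and $\leq_\ii B$). But if $\leq_\ii \implies \leq_*$ held for some $\leq_*$ on the list, we would get $A \oplus B \leq_* A$. Since every $\leq_*$ on the list implies $\leq_T$ (Theorem~\ref{T:imps}(1), and $\leq_\cor, \leq_\gen \implies \leq_T$ is immediate), this yields $A \oplus B \leq_T A$, whence $B \leq_T A$ — contradicting Turing-incomparability. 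This handles $\leq_T, \leq_1$, and all of $\leq_\mr, \leq_\mf, \leq_\ubfb, \leq_\cf, \leq_\cor, \leq_\gen$ uniformly.

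The main obstacle, such as it is, is not mathematical depth but bookkeeping: making sure the single choice of witnessing sets $A, B$ in the first direction genuinely produces a contradiction with $\leq_\ii$ rather than merely a curiosity, and confirming that $\leq_1 \implies \leq_\cor$ and $\leq_1 \implies \leq_\gen$ (so that the first argument covers $\leq_\cor$ and $\leq_\gen$ too) — these follow because a $1$-reduction $f$ gives a functional that, on any partial/coarse oracle of large domain for $A$, produces an oracle of equally large domain for $B$ via $f^{-1}$, exactly as in the cofinite case sketched in Section~\ref{S:intro}. Once those routine implications are in hand, both non-implications reduce to the two propositions already proved, so the write-up should be short.
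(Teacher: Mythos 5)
Your second direction is close to the paper in spirit, but the first direction contains a fatal misconception, and both directions quietly rely on false implications involving $\leq_\cor$ and $\leq_\gen$.

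The central gap is your claim that $A \leq_\ii B$, ``witnessed by some $\Phi$ which (specializing the partial oracle to $B$ itself) shows $A \leq_T B$.'' This is false. By Definition~\ref{D:ii_formal}, running $\Phi$ with the total oracle $B$ only produces a partial function with infinite domain that agrees with $A$ where defined; it does not compute all of $A$. Proposition~\ref{P:iibasics}(2) refutes your claim directly: $A \oplus B \leq_\ii B$ holds for \emph{every} $A$ and $B$, and $A \oplus B \leq_T B$ certainly does not. Without this step your first direction has no contradiction. A second, independent problem in that direction is the parenthetical assertion that ``$\leq_1 \implies \leq_\cor, \leq_\gen$ is routine.'' Both are false; Section~\ref{S:nonimps} shows that $B \leq_1 A$ can hold with $B \nleq_\cor A$ (take $A = \{2^n : n \in B\}$; then $A \leq_\cor \emptyset$ but $B$ need not be), and likewise for $\leq_\gen$. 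The sparsity of a $1$-reduction's range means a density-$1$ oracle for $A$ can give no density-$1$ information about $B$ at all. Finally, your second direction asserts that ``every $\leq_*$ on the list implies $\leq_T$,'' but $\leq_\cor$ does \emph{not} imply $\leq_T$ (Definition~\ref{D:coar_formal} does not require $\Phi^A = B$, and the paper exhibits $B \leq_\cor A$ with $B \nleq_T A$), so the $\leq_\cor$ case of the second direction is left unproved.

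The paper's actual argument sidesteps all of this with a single short chain. Proposition~\ref{P:joinsarejoins} gives $A \leq_* A \oplus \emptyset$ for every $\leq_*$ in question (this is where joins, rather than $\leq_1$, are used to bootstrap, precisely because $\leq_1 \implies \leq_\cor$ fails), and Proposition~\ref{P:iibasics}(2) gives $A \oplus \emptyset \leq_\ii \emptyset$. Hence either hypothetical implication $\leq_* \implies \leq_\ii$ or $\leq_\ii \implies \leq_*$, applied along this chain and combined with transitivity, would force $A \leq_\ii \emptyset$ or $A \leq_* \emptyset$ for \emph{all} $A$. Choosing $A$ weakly $1$-generic kills both at once via Proposition~\ref{P:randomgenerics}, which uniformly covers $\leq_\cor$, $\leq_\gen$, and $\leq_\ii$ alongside the reducibilities that factor through $\leq_T$. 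You might reconstruct something like this by replacing your ``$\leq_\ii \implies \leq_T$'' step with an appeal to Proposition~\ref{P:randomgenerics} and replacing Turing-incomparability with relative $1$-genericity, but as written the proposal does not go through.
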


\begin{proof}
For all sets~$A$, we have
\[
A \leq_* A \oplus \emptyset \leq_\ii \emptyset,
\]
by Propositions \ref{P:joinsarejoins} and \ref{P:iibasics}. Thus, if~$\leq_* \implies \leq_\ii$ we have~$A \leq_\ii \emptyset$, and~$\leq_\ii \implies \leq_*$ we have~$A \leq_* \emptyset$. Thus, to disprove either implication, let~$A$ be any weakly~$1$-generic or~$1$-random set, so that by Proposition \ref{P:randomgenerics}, we have~$A \nleq_\ii \emptyset$ and~$A \nleq_* \emptyset$.
\end{proof}

We now turn to~$\leq_g$ and~$\leq_\cor$. The incomparability of these two with one another follows from the work of Jockusch and Schupp~\cite{JS-2012}.

\begin{proposition}
\
\begin{enumerate}
\item There exist sets~$A$ and~$B$ such that~$B \leq_\gen A$ but~$B \nleq_\cor A$.
\item There exist sets~$A$ and~$B$ such that~$B \leq_\cor A$ but~$B \nleq_\gen A$.
\end{enumerate}
\end{proposition}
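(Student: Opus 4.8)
The plan is to build both separations by combining the existing separation of generic and coarse \emph{computability} due to Jockusch and Schupp with a relativization/coding trick that converts a set which is generically but not coarsely computable (or vice versa) into a pair of sets witnessing the reducibility separation. For part~1, I would start from a set $X$ that is generically computable but not coarsely computable; such a set exists by \cite{JS-2012}. Then I would take $A = \emptyset$ and $B = X$. Since $X$ is generically computable and generic computability is the $A = \emptyset$ case of $\leq_\gen$ (as noted after Definition~\ref{D:gen_formal}), we get $B \leq_\gen A$ immediately. On the other hand, if $B \leq_\cor A = \emptyset$ then $B$ would be coarsely computable (again the $A = \emptyset$ case of $\leq_\cor$), contradicting the choice of $X$. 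Part~2 is symmetric: take a set $X$ that is coarsely computable but not generically computable, set $A = \emptyset$ and $B = X$; then $B \leq_\cor A$ holds trivially while $B \leq_\gen A$ would make $X$ generically computable.

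The only real content that is not already in the excerpt is the existence of the two ``separating'' sets: a set that is generically computable but not coarsely computable, and one that is coarsely computable but not generically computable. Both are established by Jockusch and Schupp in~\cite{JS-2012}; for instance, a sufficiently sparse but not ``density-regular'' computable set (or the complement of one) gives coarse-but-not-generic, and a set built by a finite-injury-style construction to diagonalize against all total computable functions on a density-$1$ domain, while being enumerable on a density-$1$ domain, gives generic-but-not-coarse. I would simply cite the relevant results from~\cite{JS-2012} rather than reproduce the constructions. The verification that the $A = \emptyset$ instances of $\leq_\gen$ and $\leq_\cor$ literally coincide with generic and coarse computability is the remark immediately following Definition~\ref{D:gen_formal}, so no further argument is needed there.

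The step I expect to require the most care is making sure the two notions really do agree at $A = \emptyset$ in the direction I need — namely that $B \leq_\cor \emptyset$ \emph{implies} $B$ is coarsely computable (and similarly for $\leq_\gen$). This is straightforward: a Turing functional $\Phi$ witnessing $B \leq_\cor \emptyset$ must, by definition, have $\Phi^C$ total with $\{n : \Phi^C(n) = B(n)\}$ of density $1$ whenever $\{n : C(n) = \emptyset(n)\}$ has density $1$; taking $C = \emptyset$ itself, $\Phi^\emptyset$ is a total computable function agreeing with $B$ on a density-$1$ set, so $B$ is coarsely computable. The generic case is identical, using a partial oracle for $\emptyset$ with full domain. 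So in fact there is no genuine obstacle; the proposition is essentially an observation reducing the reducibility separations to the already-known computability separations, and the proof will be short.
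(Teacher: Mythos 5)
Your proposal is correct and matches the paper's proof essentially exactly: both take $A = \emptyset$ and $B$ to be a Jockusch--Schupp set that is generically but not coarsely computable (resp.\ coarsely but not generically computable), invoking the observation that the $A = \emptyset$ cases of $\leq_\gen$ and $\leq_\cor$ coincide with generic and coarse computability.
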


\begin{proof}
Jockusch and Schupp~\cite[Theorem~2.26 and Proposition~2.15]{JS-2012} exhibited a set that is generically computable but not coarsely computable, and a set that is coarsely computable but not generically computable. We thus take~$A$ to be~$\emptyset$, and~$B$ to be the set from the relevant Jockusch-Schupp construction.
\end{proof}

Finally, we establish that~$\leq_\gen$ and~$\leq_\cor$ are incomparable with all of the other reducibilities mentioned in Theorem \ref{T:imps}. Our proof below actually works for both the uniform and nonuniform versions of these reducibilities, as discussed in Section~\ref{S:intro}.

\begin{proposition}
\
\begin{enumerate}
\item There exist sets~$A$ and~$B$ such that~$B \leq_1 A$ but~$B \nleq_\cor A$.
\item There exist sets~$A$ and~$B$ such that~$B \leq_\cor A$ but~$B \nleq_T A$.
\end{enumerate}
Hence if~$\leq_*$ is~$\leq_T$,~$\leq_1$, or any of our reducibilities other than~$\leq_\cor$, then~$\leq_* \nimplies \leq_\cor$ and~$\leq_\cor \nimplies \leq_*$. The same is true for~$\leq_\gen$ in place of~$\leq_\cor$. 
\end{proposition}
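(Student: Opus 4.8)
The plan is to prove the two displayed statements by small explicit constructions and then read off the ``Hence'' clause from Theorem~\ref{T:imps} together with the two propositions immediately preceding this one (which already dispose of $\leq_\cor$ and $\leq_\gen$ against $\leq_\ii$ and against each other). The guiding idea for part~1 is to code $B$ into $A$ on a set of density~$0$, so that a density-$1$ agreement with $A$, or a density-$1$-domain partial oracle for $A$, can be arranged to carry no information whatsoever about $B$; this reduces everything to Proposition~\ref{P:randomgenerics}.

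Concretely, for part~1 I would fix $B$ weakly $1$-generic (or $1$-random), put $f(n)=2^{n+1}$, and let $A=\{f(n):n\in B\}$. Then $A\subseteq\{2,4,8,\dots\}$ has density~$0$ and $B\leq_1 A$ via $f$. If $\Phi$ witnessed $B\leq_\cor A$, then since the all-zeros oracle $\emptyset$ agrees with $A$ on $\omega\setminus A$, a set of density~$1$, $\Phi^\emptyset$ would be total and hence computable, and would agree with $B$ on a set of density~$1$; so $B$ would be coarsely computable, contradicting Proposition~\ref{P:randomgenerics} with oracle~$\emptyset$. The same pair witnesses $B\nleq_\gen A$: the partial oracle $(A)$ returning $0$ on every number not of the form $2^{n+1}$ and diverging elsewhere is a legitimate partial oracle for $A$ (since $A$ contains no such number), is computable, and has domain of density~$1$; fed to a putative generic reduction it would exhibit $B$ as generically computable, again contradicting Proposition~\ref{P:randomgenerics}.

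For part~2 I would use that coarse and generic computability are strictly weaker than computability. Take $A=\emptyset$ and let $B$ be a noncomputable set of density~$0$, say $B=\{2^n:n\in\emptyset'\}$, so $B\equiv_T\emptyset'$ and in particular $B\nleq_T\emptyset$. The functional that ignores its oracle and always outputs $0$ witnesses $B\leq_\cor\emptyset$, as it is correct on $\omega\setminus B$, a density-$1$ set; the functional that ignores its oracle, outputs $0$ off the powers of $2$, and diverges on the powers of $2$ shows $B$ is generically computable, hence $B\leq_\gen\emptyset$ by the remark following Definition~\ref{D:gen_formal}. Thus the single pair $(\emptyset,B)$ witnesses both $\leq_\cor\nimplies\leq_T$ and $\leq_\gen\nimplies\leq_T$.

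The ``Hence'' then assembles as follows. Since $\leq_1\implies\leq_*$ for every $\leq_*\in\{\leq_\mr,\leq_\mf,\leq_\ubfb,\leq_\cf,\leq_T\}$ by Theorem~\ref{T:imps}, part~1 supplies a pair with $B\leq_*A$ yet $B\nleq_\cor A$ and $B\nleq_\gen A$, so $\leq_*\nimplies\leq_\cor$ and $\leq_*\nimplies\leq_\gen$; dually, each such $\leq_*$ implies $\leq_T$, so part~2 gives $\leq_\cor\nimplies\leq_*$ and $\leq_\gen\nimplies\leq_*$. The remaining cases $\leq_*=\leq_\ii$ are covered by the proposition on $\leq_\ii$, and $\leq_\cor$ versus $\leq_\gen$ by the proposition immediately preceding this one. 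I do not anticipate a real obstacle: the only genuinely delicate point is checking that the cheap oracles used in part~1 meet the largeness hypothesis for $A$, which is exactly why the coding set $\{2^{n+1}:n\in\omega\}$ must have density~$0$; a secondary point is that the generic analogue of part~2 leans on $\leq_\gen$ extending generic computability at $A=\emptyset$, as asserted after Definition~\ref{D:gen_formal}, rather than on the literal ``$\Phi^A=B$'' clause there.
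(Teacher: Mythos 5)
Your proof is correct and follows essentially the same route as the paper's: for part~1 code a coarsely/generically non-computable $B$ into a density-$0$ set of powers of~$2$ (the paper uses $\{2^n:n\in B\}$ with $B\nleq_\cor\emptyset$ and observes $A\leq_\cor\emptyset$, invoking transitivity; you spell out the all-zeros oracle argument directly, which amounts to the same thing), and for part~2 take $A=\emptyset$ and $B$ a noncomputable subset of the powers of~$2$. The assembly of the ``Hence'' clause from Theorem~\ref{T:imps} and the two preceding propositions is exactly what the paper intends, and your flag about the ``$\Phi^A=B$'' clause in Definition~\ref{D:gen_formal} (which the paper itself only resolves informally in the surrounding discussion) is a legitimate observation, not a gap.
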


\begin{proof}
For part~1, fix any~$B \nleq_\cor \emptyset$ and let~$A = \{2^n : n \in B\}$. Then obviously~$B \leq_1 A$ but~$A \leq_\cor \emptyset$, so~$B$ is not coarsely reducible to~$A$. For part~2, let~$C$ be any non-computable set,~$B = \{2^n : n \in C\}$, and~$A = \emptyset$. Then~$B \leq_\cor A$, but~$B \nleq_T A$ since~$B \equiv_T C$. The proofs for~$\leq_\gen$ are similar.
\end{proof}

\subsection{Mod-recursive reducibility}

We begin in this section with a straightforward result that shows that the implication from~$\leq_1$ to~$\leq_\mr$ is strict.

\begin{proposition}\label{P:mrnimp1}
There exist sets~$A$ and~$B$ such that~$B \leq_\mr A$ but~$B \nleq_1 A$.
\end{proposition}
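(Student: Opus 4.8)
The goal is to separate $\leq_\mr$ from $\leq_1$: to find $A,B$ with $B\leq_\mr A$ but $B\nleq_1 A$.

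The plan is to exploit the fact that a $1$-reduction is extremely rigid — it is witnessed by an injective computable function $f$ with $B=\{n:f(n)\in A\}$ — whereas a mod-recursive reduction only needs the set of correct positions to be \emph{preserved up to Turing equivalence} under changes to the oracle. So I would look for a $B$ and $A$ where $B$ is "obviously" mod-recursively computable from $A$ for a cheap structural reason (e.g.\ $A$ literally carries $B$ plus some redundant computable scaffolding), but where no single injective computable function can pull $B$ back out of $A$ because the bits of $B$ are smeared across $A$ in a way no fixed $f$ can track.

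Concretely, I expect the cleanest witness is something like taking $B$ to be any non-computable set and letting $A$ be a set in which each bit $B(n)$ is coded redundantly, say $A = \{ \langle n, i\rangle : i<2 \text{ and } B(n)=1\}$ or even just $A = B \oplus B$ appropriately re-indexed so that the first coordinate of $A$ is forced to behave computably. The point of the redundancy/scaffolding is: if $C$ agrees with $A$ on a computable set $D$, then the correct-position set for recovering $B$ from $C$ via the obvious reduction $\Phi$ is the image/preimage of $D$ under a computable bijection, hence computable, so $B\leq_\mr A$. Meanwhile to kill $\leq_1$ I want $A$ to have enough "junk" positions — positions whose membership in $A$ is decided computably and does not depend on $B$ — so that an alleged $1$-reduction $f$ for $B$ would have to hit non-junk positions, and I can diagonalize: arrange the coding so that for each potential computable injection $f$, the equation $B(n) = A(f(n))$ fails for some $n$. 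The simplest incarnation: make $A$ itself a \emph{computable} set on a cofinite set of "coordinates" but noncomputable only because of how $B$ sits inside it in a spread-out fashion; then no $1$-reduction can work because a $1$-reduction to $A$ would only ever see the bits $A(f(n))$, and by choosing the coding so that every computable injection $f$ eventually lands in the computable part, we get $\{n:A(f(n))\in A\}$ computable, contradicting $B$ noncomputable. Actually the slickest version is probably: let $B$ be noncomputable, and let $A = \{2^n : n \in B\} \cup \{\text{odd numbers}\}$, or similar, where the odd part is a fixed computable "mask"; then I claim $B\leq_\mr A$ via the reduction that reads $A$ at $2^n$ (a computable change-set argument shows the correct set stays computable) while $B\nleq_1 A$ because any $1$-reduction $f$ would need $f(n)$ to be a power of $2$ for all but... hmm, $f$ could land on odds, but then $A(f(n))=1$ always, forcing $B$ cofinite — so unless $B$ is cofinite this fails. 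So I'd take $B$ noncomputable and co-infinite, e.g.\ $B$ and its complement both infinite.

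The main obstacle I anticipate is getting the $\leq_\mr$ direction exactly right: mod-recursive reducibility demands that for \emph{every} $C$ agreeing with $A$ on a computable set, $\Phi^C$ be total and correct on a computable set — and crucially the notion of largeness is not closed under superset, so I must check $\Phi^C$ is correct on a set that is computable, not merely contains a computable set. With a clean bit-by-bit coding (each bit of $B$ coded into one or finitely many bits of $A$ by a computable injection) this should follow by the same argument as in the $\leq_1\implies\leq_\mr$ proof in Theorem~\ref{T:imps}, since $\{n:\Phi^C(n)=B(n)\}$ will literally be the preimage under a computable function of $\{k:C(k)=A(k)\}$. The delicate point is making sure the "mask"/scaffolding bits of $A$ don't interfere — i.e.\ that $\Phi$ ignores them — and that $A$ genuinely fails to admit a $1$-reduction; for the latter I expect a short case analysis on where a hypothetical $f$ sends its inputs (into the mask, giving $B$ cofinite; or into the coding region, giving $B$ computable or a direct diagonalization), each case contradicting the choice of $B$. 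So the bulk of the work is just choosing $B$ (noncomputable, with noncomputable complement, or with whatever extra genericity the $1$-reduction refutation needs) and verifying these two short claims.
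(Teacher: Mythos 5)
Your concrete witnesses do not work: with $A=\{2^n:n\in B\}\cup\{\text{odds}\}$, the computable injection $f(n)=2^n$ gives $B=\{n:f(n)\in A\}$, so $B\leq_1 A$ after all. The same happens with $A=\{\langle n,i\rangle:i<2,\ B(n)=1\}$ (use $f(n)=\langle n,0\rangle$) and with $A=B\oplus B$ (use $f(n)=2n$). You anticipate the case where $f$ lands in the ``mask,'' but the fatal case is $f$ landing in the coding region, and you do not address it. In fact it cannot be addressed within your framework: you explicitly restrict to ``each bit of $B$ coded into one or finitely many bits of $A$ by a computable injection,'' i.e.\ $B(n)=A(f(n))$ with $f$ computable and injective. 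That is the definition of a $1$-reduction, so any $\leq_\mr$ obtained this way is automatically a $\leq_1$, and the strategy is self-defeating.

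What you are missing is that $\leq_\mr$ is closed under an operation $\leq_1$ is not: postcomposing the output with the flip $i\mapsto 1-i$. The paper's proof is exactly this observation. Take any set $A$ with $\overline{A}\nleq_1 A$ (e.g.\ any c.e.\ noncomputable $A$, since then $\overline{A}$ is not c.e.), and let $B=\overline{A}$. The functional $\Phi^C(n)=1-C(n)$ satisfies $\Phi^A=B$ and, for any $C$, $\{n:\Phi^C(n)=B(n)\}=\{n:C(n)=A(n)\}$, which is computable whenever the agreement set is. So $B\leq_\mr A$ while $B\nleq_1 A$ by choice of $A$. No coding or diagonalization is needed. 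If you want to keep a coding-style separation, you would need a reduction that is genuinely not of the form $n\mapsto A(f(n))$ — for instance one that flips or aggregates bits — and then carry out a diagonalization against all $1$-reductions; but the complementation trick makes all of that unnecessary.
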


\begin{proof}
Fix any set~$A$ such that~$\overline{A} \nleq_1 A$, and let~$B = \overline{A}$. Then observe that every set is mod-recursive reducible to its complement via the functional~$\Phi$ defined by~$\Phi^C(n) = 1 - C(n)$ for all sets~$C$ and numbers~$n$. Indeed, we have that~$\{n : \Phi^C(n) = \overline{A}(n)\} = \{n : 1- C(n) = 1 - A(n)\} = \{n : C(n) = A(n)\}$.
\end{proof}

The next proposition establishes that~$\leq_\mr$ is not implied by~$\leq_\mf$, or in fact, any of our other reducibilities, since all of these are implied by~$\leq_\mf$. Thus, no other implications to~$\leq_\mr$ can be added to Theorem \ref{T:imps}. The  argument below follows a basic outline that will be used for several others in this section. We have a definition~$\Delta$ of a set~$B$ from a set~$A$ in mind that happens to be a reduction of a particular kind (below, a mod-finite reduction) but not of another (below, a mod-recursive reduction). Then, exploiting the fact that if~$\Phi$ witnesses a reduction of the latter kind it must be that~$\Phi^A = B$, we force~$\Phi$ to either equal~$\Delta$ or to make a mistake, and thus, in either case, to not witness the latter reduction after all.

\begin{proposition}
There exist sets~$A$ and~$B$ such that~$B \leq_\mf A$ but~$B \nleq_\mr A$.
\end{proposition}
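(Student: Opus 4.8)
### Proof proposal

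The plan is to follow the outline flagged in the paragraph before the statement: pick a concrete definition $\Delta$ of a set $B$ from a set $A$ that is manifestly a mod-finite reduction, and then build $A$ (and hence $B = \Delta^A$) by a finite-injury-free diagonalization that kills every potential mod-recursive reduction $\Phi_e$. The key leverage is the observation recalled just before the statement: any $\Phi$ witnessing $B \leq_\mr A$ must satisfy $\Phi^A = B$ exactly, so it suffices to arrange, for each $e$, either that $\Phi_e^A \neq B$ outright, or that $\Phi_e$ fails to be a mod-recursive reduction because there is some $C$ agreeing with $A$ on a computable set for which $\{n : \Phi_e^C(n) = B(n)\}$ is \emph{not} computable.

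For the definition $\Delta$, I would take something like $B = \mathcal{R}(A)$ from the previous section (or an equally redundant coding): each bit of $A$ is copied into infinitely many bits of $B$, a single bit of $A$ affects a computable set of bits of $B$, and $A \equiv_1 B$ on a suitable subset so that $\leq_\mf$ is immediate via the coding functional $\Delta$. Crucially, $\mathcal{R}$ is \emph{not} a mod-recursive reduction (a finite change in the oracle produces a noncomputable change in the output, as long as $A$ is noncomputable), which is exactly the asymmetry we want to exploit. The requirements are then $R_e$: ``$\Phi_e$ does not witness $B \leq_\mr A$.'' To meet $R_e$, wait until $\Phi_e^A$ looks total on a long initial segment and agrees with $B = \Delta^A$ there (if this never happens, $R_e$ is satisfied because $\Phi_e^A \neq B$ or is non-total); then choose a fresh ``block'' of coordinates of $A$ that $\Phi_e$'s computations on the already-decided coordinates did not query, and exploit freedom in those coordinates to either force a disagreement $\Phi_e^A(n) \neq B(n)$ for some large $n$, or — if $\Phi_e$ is robust enough that no single oracle-set makes it err — to build a computable set $C$ (differing from $A$ only on a computable set, e.g.\ on that block or a computable pattern of blocks) such that $\Phi_e^C$ disagrees with $B$ on a non-computable set, contradicting that $\Phi_e$ is mod-recursive.

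The main obstacle, and the point requiring the most care, is the second horn: manufacturing a $C =_{\text{comp}} A$ with $\{n : \Phi_e^C(n) = B(n)\}$ noncomputable. Here I would use that $A$ itself is being built noncomputably: arrange that on a computable set of coordinates, $C$ is forced to agree with $A$, while the \emph{output} of $\Phi_e^C$ on a recursive ``probe'' set of inputs is pinned (by the agreement $\Phi_e^A = B$ and by locality of use) to track bits of $A$ on a noncomputable sub-block. Concretely, if $\Phi_e$ really is total and correct whenever the oracle is computably close to $A$, then since $C$ can be taken computable-in-$A$-free-parts but still encoding $A$ through the $\Delta$-redundancy, $\Phi_e^C$ must reproduce enough of $B$ to compute $A$, so its domain-of-correctness cannot be computable — contradiction. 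I expect the bookkeeping to reduce to: (i) a use/locality lemma letting us reserve fresh unqueried blocks of $A$ for each requirement; (ii) the trivial direction that $\Delta$ witnesses $B \leq_\mf A$ for \emph{any} $A$; and (iii) choosing $A$ noncomputable (indeed we may as well take $A$ to be $1$-generic or just sufficiently generic over the construction) so that the ``$C$ is computable but $\Phi_e^C$ computes something noncomputable'' contradiction goes through. Everything else is routine.
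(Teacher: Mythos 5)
Your choice of coding $\Delta$ is backwards, and this is not a cosmetic issue: it makes the easy direction of the proposition false. You set $B = \mathcal{R}(A)$ and claim ``$\leq_\mf$ is immediate via the coding functional $\Delta$'' while at the same time noting that ``a single bit of $A$ affects a computable set of bits of $B$.'' But that second observation is precisely what makes the map $C \mapsto \mathcal{R}(C)$ a \emph{mod-recursive} reduction and \emph{not} a mod-finite one: flipping one bit $n$ of the oracle flips $\mathcal{R}$ on the infinite (computable) set $\{2^n m : m \text{ odd}\}$, so finitely many oracle errors produce infinitely many output errors. Indeed, for $A$ not autoreducible, $\mathcal{R}(A) \nleq_\cf A$ and hence $\mathcal{R}(A) \nleq_\mf A$ --- this is exactly the content of the very next proposition in the paper (Proposition~\ref{P:mrnimpcf}), where $\mathcal{R}$ is used to witness $\leq_\mr$ without $\leq_\cf$. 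You have also inverted the other half: you write that ``$\mathcal{R}$ is not a mod-recursive reduction (a finite change in the oracle produces a noncomputable change in the output),'' but the change set is computable, so $\mathcal{R}$ \emph{is} a mod-recursive reduction. So the coding you have in mind exhibits $\leq_\mr$ without $\leq_\mf$, which is the opposite separation from the one being proved.

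The paper sidesteps this by putting $\mathcal{R}$ on the \emph{oracle} side rather than the output side: it builds an auxiliary set $U$, sets $A = \mathcal{R}(U)$, and defines $B(n) = U(2n)U(2n+1)$. Because $A$ carries infinitely many redundant copies of each bit of $U$, a mod-finite oracle for $A$ yields a mod-finite oracle for $U$, and since each bit of $B$ depends on only two bits of $U$, this gives $B \leq_\mf A$. Meanwhile $A \leq_\mr U$ (that is the harmless direction of $\mathcal{R}$), so it suffices to diagonalize to get $B \nleq_\mr U$, which is done by a Step~1/Step~2 alternation: either force $\Phi_e^U \neq B$ outright, or force $\Phi_e^S(n) \simeq S(2n)S(2n+1)$ for all $S$ extending the current string, in which case flipping the odd-indexed bits from some point $s$ on gives a $U_s$ with $\{n : U_s(n) = U(n)\}$ computable but $\{n : \Phi_e^{U_s}(n) = B(n)\}$ equal (cofinitely) to $\{n : U(2n) = 0\}$, which Step~2 makes noncomputable. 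Your diagonalization sketch gestures at something similar (fresh blocks, a ``probe set'' tracking noncomputable bits of $A$), but it cannot be assessed as written because it is built on a coding that does not give you $B \leq_\mf A$ to begin with. If you fix the direction of the coding --- make the \emph{oracle} redundant, not the output --- the rest of your outline can plausibly be carried through.
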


\begin{proof}
We construct a set~$U$ and define~$B$ by~$B(n) = U(2n)U(2n+1)$. We then let~$A = \mathcal{R}(U)$. Thus,~$B \leq_\mf A$ and~$A \leq_\mr U$, so it suffices to show that~$B \nleq_\mr U$. To this end, we define for each~$s \in \omega$ the set
\[
U_s= U \res s \cup \{2n \geq s : 2n \in U\} \cup \{2n+1 \geq s : 2n+1 \notin U\},
\]
and satisfy the following requirement for each~$e \in \omega$:
\[
\begin{array}{lll}
R_e & : & \Phi^U_e \neq B \textrm{ or for some~$s$, either } \Phi_e^{U_s} \textrm{ is not total or }\\
& &  \{n \in \omega: \Phi^{U_s}_e(n) = B(n)\} \textrm{ is not computable.}
\end{array}
\]
Since~$\{n \in \omega : U_s(n) = U(n)\}$ is obviously computable for each~$s$, this will ensure that~$\Phi_e$ does not witness a mod-recursive reduction.

\medskip
\noindent \emph{Construction.} We let~$U = \bigcup_s \sigma_s$, where~$\sigma_0 \preceq \sigma_1 \preceq \cdots$ are finite strings obtained as follows. Let~$\sigma_0 = \emptyset$, and suppose some~$\sigma_e$ is given.

\medskip
\noindent \emph{Step 1.} Ask if there exists a~$\sigma \succeq \sigma_e$ such that~$\Phi^{\sigma}_e(n) \downarrow \neq \sigma(2n)\sigma(2n+1)$ for some~$n$ with~$2n+1 < |\sigma|$. If so, fix the least such~$\sigma$, and otherwise let~$\sigma = \sigma_e$.

\medskip
\noindent \emph{Step 2.} Ask if there is an~$n$ with~$2n \geq |\sigma|$ such that~$\Phi_e(2n) \downarrow$. If so, let~$\sigma_{e+1}$ be the least extension of~$\sigma$ with~$\sigma_{e+1}(2n) \neq \Phi_e(2n)$, and otherwise let~$\sigma_{e+1} = \sigma$.

\medskip
\noindent \emph{End construction.}

\medskip
\noindent \emph{Verification.} Fix~$e$, and suppose~$\Phi^U_e = B$. Then it must be that in the construction, we could not find a~$\sigma \succeq \sigma_e$ satisfying the question asked at Step 1. In other words, any set~$S$ extending~$\sigma_e$ satisfies~$\Phi_e^S(n) \simeq S(2n)S(2n+1)$. In particular, this holds for~$S = U_s$, where~$s = |\sigma_e|$. Assuming~$\Phi_e^{U_s}$ is total, this means that for all~$n$ with~$2n+1 \geq s$,
\[
\Phi^{U_s}_e(n) = U_s(2n)U_s(2n+1) = U(2n)[1-U(2n+1)],
\]
which can only equal~$B(n) = U(2n)U(2n+1)$ provided~$U(2n) = 0$. Thus,~$\{n \in \omega : \Phi^{U_s}_e(n) = B(n)\}$ is computable if and only if~$\{2n \in \omega : 2n \in U\}$ is, but we ensured that the latter set is not computable under Step 2 of the construction.
\end{proof}

It remains to show that no further implications from~$\leq_\mr$ are possible either. This follows from the following result, again with a relatively straightforward proof, which completes our analysis of mod-recursive reducibility.

\begin{proposition}\label{P:mrnimpcf}
There exists sets~$A$ and~$B$ such that~$B \leq_\mr A$ but~$B \nleq_\cf A$.
\end{proposition}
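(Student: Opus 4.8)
The plan is to take $A = S$ and $B = \mathcal{R}(S)$ for a set $S$ built by a finite-extension construction. One direction holds for \emph{every} $S$ and is easy: writing $\nu_2(x)$ for the $2$-adic valuation of $x$, the functional $\Phi$ with $\Phi^C(x) = C(\nu_2(x))$ satisfies $\Phi^S = \mathcal{R}(S)$, and for any $C$ the set $\{x : \Phi^C(x) = \mathcal{R}(S)(x)\}$ equals $\{x : \nu_2(x) \in E\}$, where $E = \{n : C(n) = S(n)\}$; so whenever $E$ is computable, $\Phi^C$ is total and correct on a computable set, giving $B \leq_\mr A$. The task is thus to arrange $\mathcal{R}(S) \nleq_\cf S$. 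The feature to exploit is that the single bit $S(n)$ controls the infinitely many bits $\mathcal{R}(S)(2^n m)$ ($m$ odd), all equal to $S(n)$: a cofinite partial oracle for $S$ is allowed to withhold the bit $n$, and then a cofinite reduction would have to recompute $S(n)$ from the rest of $S$ — which we make impossible by diagonalization.

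Concretely, I would build $S = \bigcup_e \sigma_e$ with $\sigma_0 = \emptyset$ and $|\sigma_{e+1}| > |\sigma_e|$, ensuring at stage $e$ that $\Psi_e$ is not a witness for $\mathcal{R}(S) \leq_\cf S$. Given $\sigma_e$, ask whether there is a finite partial oracle $\pi$ consistent with $\sigma_e$ that decides $\nu_2(x)$ for some $x$ and has $\Psi_e^\pi(x)\downarrow$ disagreeing with the decided value. If so, extend $\sigma_e$ to a string forcing $S$ to agree with $\pi$; then feed $\Psi_e$ a cofinite (indeed full-domain) partial oracle $(S)$ for $S$ which supplies $\pi$'s answers first and the remaining bits of $S$ only after long delays, so that $\Psi_e$ halts on $x$ with $\Psi_e^{(S)}(x) = \Psi_e^\pi(x) \neq S(\nu_2(x)) = \mathcal{R}(S)(x)$, violating Definition~\ref{D:cf_formal}. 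If not, set $\sigma_{e+1} = \sigma_e{}^\frown 0$ and $k = |\sigma_e|$, and let $(S)^{[k]}$ be the partial oracle for $S$ that deletes exactly the bit $k$ — a cofinite partial oracle. The claim is that $\Psi_e^{(S)^{[k]}}(x)\uparrow$ for every $x$ with $\nu_2(x) = k$, of which there are infinitely many, so $\Psi_e^{(S)^{[k]}}$ has coinfinite domain and $\Psi_e$ again fails.

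This last claim is the heart of the matter and the step I expect to need the most care. If $\Psi_e^{(S)^{[k]}}(x)\downarrow$ with $\nu_2(x) = k$, the halting computation draws answers only from positions $\neq k$, i.e., from a finite partial oracle $\pi$ consistent with $\sigma_e$ that does not decide position $k = \nu_2(x)$; adding to $\pi$ an answer $b \in \{0,1\}$ for position $k$ \emph{delayed past the halting time} gives a finite partial oracle $\pi_b$ consistent with $\sigma_e$ that decides $\nu_2(x)$ as $b$ and still has $\Psi_e^{\pi_b}(x)\downarrow = \Psi_e^\pi(x)$. Being in the ``no'' case, this forces $\Psi_e^{\pi_b}(x) = b$ for both $b=0$ and $b=1$ — a contradiction. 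The recurring obstacle here is being careful with the partial-oracle model, in particular that a computation may query a bit, receive no answer, and halt regardless; this is exactly why ``delaying past the halting time'' is used in both cases. Once this is handled, each requirement is met by an argument depending only on $\sigma_e$ and hence immune to later extensions, so $\mathcal{R}(S) \nleq_\cf S$. (Alternatively one could take $S$ to be any sufficiently generic set and replace the stagewise construction by a single genericity argument against the $\Sigma^0_1$ set of ``dishonest'' conditions, the same delaying device handling the model subtleties.)
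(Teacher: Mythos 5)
Your proof is correct and reaches the conclusion by a genuinely different route. The paper also takes $B = \mathcal{R}(A)$ and the same functional $\Phi^C(x)=C(\nu_2(x))$ for the $\leq_\mr$ direction, but for the $\nleq_\cf$ direction it does not diagonalize: it simply takes $A$ to be any non-autoreducible set, and observes that a cofinite reduction $\Psi$ of $\mathcal{R}(A)$ from $A$ would yield an autoreduction of $A$ (run $\Psi$ on the oracle for $A$ with bit $n$ withheld, dovetail over inputs $2^n m$ for $m$ odd, and take the first output; cofinite domain forces a halt, and the halting run never queries $A(n)$). That argument is shorter because it offloads the diagonalization to the known existence of non-autoreducible sets. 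Your construction is essentially an unpacking of that fact, adapted directly to the partial-oracle model: the ``yes'' case handles possible wrong answers and the ``no'' case handles forced divergence, with the delay device taking the place of the use principle for partial oracles. Both are sound; yours is more self-contained but longer, while the paper's is modular and delegates the combinatorial work. One small point worth flagging in your write-up: in the ``no'' case the $\pi$ you extract from $\Psi_e^{(S)^{[k]}}(x)$ may query bits of $S$ beyond $|\sigma_e|$ whose values are set at later stages, but this is harmless, because such a $\pi$ is still a finite partial oracle consistent with $\sigma_e$, and the ``no'' condition was quantified over \emph{all} finite partial oracles consistent with $\sigma_e$, not merely those determined by $\sigma_e$ itself; you implicitly rely on this when you say the requirement is ``immune to later extensions,'' and it would be worth making explicit.
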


\begin{proof}
Consider any set~$A$ that is not autoreducible, meaning there is no functional~$\Phi$ such that~$A = \Phi^A$ and the computation of~$\Phi^A(n)$ does not query~$A(n)$. Let~$B = \mathcal{R}(A)$. Since~$A$ cannot be uniformly recovered from cofinitely many of its own bits, but can be recovered from cofinitely many of the bits of~$B$, it follows that~$B \nleq_\cf A$. However,~$B \leq_\mr A$, as witnessed by the functional~$\Psi$ taking a set~$C$ to~$\mathcal{R}(C)$, since then
\[
\{ k : \Psi^C(k) = B(k) \} = \{ 2^n m : C(n) = A(n),~m \textrm{ odd}\} \leq_T \{n \in \omega: C(n) = A(n)\}.\qedhere
\]
\end{proof}

\subsection{Mod-finite, uniform-bounded-from-below, and cofinite reducibilities}

In this section, we focus on the remaining open implications in Theorem \ref{T:imps}, which are now just the reversals of those in the chain~$\leq_1 \implies \leq_\mf \implies \leq_\ubfb \implies \leq_\cf \implies \leq_T$. The following sequence of propositions establishes that none of these can be reversed.

\begin{proposition}
There exist sets~$A$ and~$B$ such that~$B \leq_\mf A$ but~$B \nleq_1 A$.
\end{proposition}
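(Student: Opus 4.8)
The plan is to build $A$ and $B$ so that $B \leq_\mf A$ is witnessed by a completely trivial reduction (say, $B$ is coded into $A$ bit-for-bit along a computable sparse set of locations, with the rest of $A$ irrelevant), while $B \nleq_1 A$ for cardinality/coding reasons. The simplest route: let $B$ be \emph{any} noncomputable set and take $A = \mathcal{R}(B) = \{2^n m : n \in B,\ m\text{ odd}\}$. Then $B \leq_\mf A$: the functional that, on input $n$, queries $A$ at the single location $2^n$ and outputs that bit is correct on all of $A$, and changing $A$ at finitely many places changes the output at only finitely many $n$, so this is a genuine mod-finite reduction (indeed $B \leq_1 A$ here, which is the wrong direction — see below). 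So this particular $A$ won't separate; I need $B$ that is mod-finite but provably not $1$-reducible to $A$.

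A cleaner construction: let $B = \overline{A}$ for a suitable $A$. Every set is mod-finite reducible to its complement (via $\Phi^C(n) = 1 - C(n)$, exactly as in the proof of Proposition~\ref{P:mrnimp1}: a finite change in $C$ produces a finite change in $1-C$). So it suffices to find a single set $A$ with $\overline{A} \nleq_1 A$. Such $A$ certainly exist — for instance, take $A$ with $\overline A \nleq_1 A$ by a direct diagonalization against all computable injections $f$: build $A$ in stages, and at stage $e$, acting on the $e$th computable partial function $f_e$, arrange that $f_e$ fails to be total and injective, or else find some $n$ (in a fresh block of numbers not yet decided) with $f_e(n)\downarrow$, and then set $A(n)$ and $A(f_e(n))$ to defeat the equivalence $\overline A(n) = A(f_e(n))$; if $f_e(n) = n$ this is automatic since $\overline A(n) \neq A(n)$, and if $f_e(n)\neq n$ we are free to choose the two bits independently. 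This is a finite-injury–free, purely effective construction, and the verification is routine: if $f_e$ were a $1$-reduction of $\overline A$ to $A$ it would be total and injective, contradicting the requirement met at stage $e$.

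The key steps, in order: (1) observe that $C \mapsto \overline C$ is always a mod-finite reduction, so $\overline A \leq_\mf A$ for every $A$; (2) construct $A$ by the diagonalization above so that $\overline A \nleq_1 A$; (3) set $B = \overline A$ and conclude. I expect step~(2) — pinning down the diagonalization so that it genuinely kills all computable injections — to be the only real content, and even there the only mild subtlety is handling the case $f_e(n) = n$ versus $f_e(n) \neq n$ and making sure the blocks of numbers used at different stages are disjoint so the commitments never conflict. Everything else is immediate from the definitions.
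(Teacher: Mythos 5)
Your proposal is correct and uses essentially the same approach as the paper: the paper also takes $B=\overline A$ for a set $A$ with $\overline A \nleq_1 A$, noting that complementation $C \mapsto \overline C$ is always a mod-finite reduction (this is the proof of Proposition~\ref{P:mrnimp1}, to which the paper defers). The paper merely asserts that such a set $A$ exists, whereas you spell out the diagonalization, which is fine but routine.
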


\begin{proof}
This can be proved identically to Proposition \ref{P:mrnimp1}. Alternatively, this can be observed from the facts, proved above, that~$\leq_1 \implies \leq_\mf$ and~$\leq_\mr \not\implies \leq_\mf$.
\end{proof}

\begin{proposition}
There exist~$A$ and~$B$ such that~$B \leq_\ubfb A$ but~$B \nleq_\mf A$.
\end{proposition}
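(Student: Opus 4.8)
The plan is to build a set $A$ together with a use-bounded-from-below reduction $\Phi$ of some $B$ to $A$, while diagonalizing against all potential mod-finite reductions. The key structural point is the difference in what the two reducibilities demand: a $\ubfb$-reduction is allowed, for each oracle input $n$, to query arbitrarily high bits of $A$ (indeed, it is forced to, for larger and larger $n$), whereas a mod-finite reduction $\Psi$ must satisfy $\Psi^C = B$ for \emph{every} $C =^* A$, which — combined with the argument in the $\leq_\mf \implies \leq_\ubfb$ proof — means such a $\Psi$ can in effect be ``flattened'' so that computing $\Psi^A(n)$ has its use bounded \emph{below} by a fixed finite level, \emph{and} the reduction is insensitive to high-bit perturbations. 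So I want to code $B(n)$ into $A$ using only bits of $A$ that lie \emph{above} some threshold depending on $n$ (in particular, each bit of $A$ is used for only finitely many $n$, giving $\ubfb$), but to arrange the coding so that the decoding genuinely needs those high bits, ruling out mod-finite.

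Concretely, I would partition $\omega$ into consecutive intervals $I_0 < I_1 < \cdots$ with $|I_n|$ growing, reserve $I_n$ as the ``coding block for $n$,'' and let $\Phi^C(n)$ inspect $C \res I_n$ and output some fixed function of it (say, the parity of $|C \cap I_n|$, or the value of a fixed bit of $C$ in $I_n$ chosen adaptively). Setting $B(n)$ to be that value makes $\Phi$ a genuine reduction with $\Phi^A = B$, and since the block $I_n$ lies entirely above $\min I_n \to \infty$, for every fixed $m$ all but finitely many computations $\Phi^A(n)$ avoid querying $A(m)$; hence $B \leq_\ubfb A$. The diagonalization is then a finite-injury / finite-extension construction against requirements $R_e$: $\Psi_e$ does not witness $B \leq_\mf A$. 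Following the outline used in the mod-recursive propositions above, for each $e$ I look for a finite extension $\sigma$ of the current condition and an $n$ such that $\Psi_e^\sigma(n)\downarrow$ disagrees with the value $\Phi$ would read off the portion of $\sigma$ in $I_n$; if such $\sigma$ exists I take it, killing $\Psi_e$ outright (since then $\Psi_e^A \neq B$). If no such $\sigma$ exists, then $\Psi_e$ is ``correct by convention'' on every extension, and I use the freedom in $A$ above $|\sigma_e|$ — in particular, inside some $I_n$ lying entirely above $|\sigma_e|$ — to perturb $A$ on the bits $I_n$ that $\Psi_e$ is demonstrably \emph{not} querying (such bits exist and can be found effectively, exactly as $\Psi$ in the $\leq_\mf\implies\leq_\ubfb$ argument has its use eventually bounded), changing $B(n)$ without changing $\Psi_e^A(n)$, so $\Psi_e^A(n) \neq B(n)$. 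Either way $R_e$ is met, and since $\Psi_e$ is a genuine mod-finite reduction only if $\Psi_e^A = B$, this suffices.

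The main obstacle I expect is the second horn of the diagonalization: arguing that if $\Psi_e$ is never caught making a convergent mistake on any extension, then I can effectively locate a bit of $A$, above the current threshold and inside a single coding block $I_n$, that $\Psi_e^A(n)$ does not depend on. This is where the $\ubfb$-vs-$\mf$ asymmetry must be used carefully: a priori $\Psi_e$ could read the \emph{whole} block $I_n$. The resolution is to invoke (a local version of) the reasoning behind $\leq_\mf \implies \leq_\ubfb$ — a mod-finite reduction is insensitive to changing finitely many high bits, so after passing to the associated ``stabilized'' functional its use on input $n$ is confined below a computable bound $m'(n)$; by choosing the blocks $I_n$ long enough (e.g.\ $|I_n|$ larger than any plausible such bound, or by a priority assignment that pushes $I_n$ up whenever $\Psi_e$ is seen to reach deeper), I guarantee an untouched bit inside $I_n$ to flip. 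Making this precise — in particular handling the interaction between finitely many requirements each wanting to reserve ever-higher blocks — is the delicate bookkeeping of the construction, but it is routine finite-extension priority work once the block sizes are chosen to grow fast enough.
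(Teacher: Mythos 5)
Your high-level framing is right — code $B(n)$ into bits of $A$ that lie above a threshold growing with $n$ (giving $\leq_\ubfb$), and then diagonalize against mod-finite reductions — but the concrete coding scheme you propose does not support the diagonalization, and the gap is genuine.

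The problem is exactly the ``second horn'' you flag, and the patches you sketch for it do not close it. If $B(n)$ is a fixed function (parity, or a designated bit) of $A \res I_n$ where $I_n$ is a \emph{bounded} block fixed in advance, then to meet $R_e$ in Case~2 you need a bit of $I_n$ that $\Psi_e^A(n)$ demonstrably does not query. But an arbitrary mod-finite reduction $\Psi_e$ may query all of $I_n$ — indeed arbitrarily far past $I_n$ — when computing $\Psi_e^A(n)$. Your appeal to the $\leq_\mf \implies \leq_\ubfb$ argument is a misremembering: that argument shows the use of the replacement functional is eventually bounded \emph{from below}, not from above, so it gives no ceiling on how much of $I_n$ is queried. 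And ``choose $|I_n|$ larger than any plausible bound'' cannot be realized, since there is no computable (or any) a priori upper bound on the use of an arbitrary total functional; ``push $I_n$ up adaptively'' also fails, because $I_n$ is part of the decoding functional $\Phi$ witnessing $B \leq_\ubfb A$ and must be fixed once and for all. Moreover, even granting an untouched bit, flipping it gives a \emph{single} disagreement, which mod-finite tolerates; you would need either infinitely many disagreements or non-totality, and neither follows.

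The paper avoids this entirely by using \emph{unbounded} coding locations. Each $n$ is assigned an infinite ``column'' $\{\seq{n,k} : k \in \omega\}$, $A$ is built so that exactly one $\seq{n,k}$ per column is in $A$, and $B(n)$ is the parity of that unique $k$. Decoding by searching the column upward is $\ubfb$ because $\min_k \seq{n,k}\to\infty$. For the diagonalization, one first rules out (by a finite-extension search) the case where $\Phi_e$ convergently errs on some valid extension; in the remaining case, the paper removes the unique witness in some column to get $C =^* A$ with that column \emph{empty}. Then $\Phi_e^C(n_e)$ cannot converge at all: any finite portion of $C$ witnessing a convergence $b$ can be extended, past the use, by a valid oracle placing a witness of parity $1-b$, contradicting ``correct on all valid extensions.'' This yields non-totality without ever locating an untouched bit, and is precisely what the bounded-block scheme cannot reproduce (a bounded block always decodes to \emph{something}, so there is no ``undefined'' state to force divergence). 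To repair your proof you would need to replace the fixed blocks with some such sparse, positionally-coded, removable-witness scheme.
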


\begin{proof}
We construct a set~$A$ by finite approximation with the property that for each~$k \in \omega$ there is exactly one~$n$ such that~$\seq{n,k} \in A$. In other words, each column of~$A$ has exactly one element. The set~$B$ is then defined from~$A$ as the set of indices of columns containing an odd element, that is,
\[
B = \{n \in \omega: \seq{n,k} \in A \textrm{ for some odd } k\}.
\]
Clearly, this makes~$B \leq_\ubfb A$. We now build~$A$ so that~$B \nleq_\mf A$. Intuitively, by removing the single element of some column of~$A$, and hence making only a finite change, we shall be able to cause a purported computation of~$B$ to not be total. We pass to the details.

\medskip
\noindent \emph{Construction.} We let~$A = \bigcup_e \sigma_e$, where~$\sigma_0 \preceq \sigma_1 \preceq \cdots$ are finite strings obtained as follows. Let~$\sigma_0 = \emptyset$. Call a string~$\sigma \in 2^{<\omega}$ \emph{valid} if for each~$k$ there is at most one~$n$ with~$\seq{n,k} < |\sigma|$ and~$\sigma(\seq{n,k}) = 1$. Now assume inductively that we have defined~$\sigma_e$ for some~$e$, and that it is valid. 

Ask if there is a valid extension~$\sigma$ of~$\sigma_s$ such that for some~$n,k$ with~$\seq{n,k} < |\sigma|$ and~$\sigma(\seq{n,k}) = 1$, either~$k$ is odd and~$\Phi_e^\sigma(n) \downarrow = 0$, or~$k$ is even and~$\Phi_e^\sigma(n) \downarrow = 1$. If so, let~$\sigma_{e+1}$ be the least such~$\sigma$. Otherwise, define~$n_e$ as the least number for which there is no~$k$ with~$\sigma_e{\seq{n_e,k}} = 1$, and define~$k_e$ as the least number with~$\seq{n_e,k_e} \geq |\sigma_e|$. Then let~$\sigma_{e+1}$ be the least valid extension of~$\sigma_e$ with~$\sigma_e(\seq{n_e,k_e}) = 1$.

\medskip
\noindent \emph{Verification.} We now fix~$e \in \omega$ and verify that~$\Phi_e$ does not witness a mod-finite reduction of~$B$ to~$A$. We may assume~$B = \Phi_e^A$, since otherwise we are done. Hence, during the construction, we must not have been able to diagonalize on~$\Phi_e$ against our method of defining~$B$ from~$A$. In other words, for any set~$C$ extending~$\sigma_e$ that contains at most one element in each column, if~$\Phi^C_e$ is total then it equals~$\{n \in \omega: \seq{n,k} \in C \textrm{ for some odd } k\}$. Furthermore,~$n_e$ and~$k_e$ must have been defined and~$\seq{n_e,k_e}$ put in~$A$. In particular, for the set~$C = A - \{ \seq{n_e,k_e}\}$, if~$\Phi^C_e(\seq{n_e,k_e})$ converged it would have to equal~$0$. But any initial segment of~$C$ witnessing this computation could then be continued by the addition of~$\seq{n_e,k}$ for some odd~$k$, which would be a contradiction. Hence,~$\Phi^C_e(\seq{n_e,k_e})$ must be undefined, and~$\Phi^C_e$ not total. Since~$C =^* A$, this means~$\Phi_e$ is not a mod-finite reduction.
\end{proof}

The coding techniques in our next proposition are more sophisticated than we have used so far. The argument below is similar to that used by Igusa \cite{???} to show that as a relation on sets, generic reducibility is~$\mathbf{\Pi}^1_1$-complete.

\begin{proposition}
There exist~$A$ and~$B$ such that~$B \leq_\cf A$ but~$B \nleq_\ubfb A$.
\end{proposition}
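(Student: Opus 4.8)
The plan is to build $A$ and $B$ by a finite-injury-style construction that produces a cofinite reduction of $B$ to $A$ but diagonalizes against every potential use-bounded-from-below reduction $\Phi_e$. The natural coding device is the $\widetilde{\mathcal{R}}$-style idea: arrange $A$ so that each bit of $B$ is coded redundantly into infinitely many "blocks" of bits of $A$, but in such a way that the coding locations for bit $n$ of $B$ do not stabilize — they keep moving upward along a computable sequence of candidate blocks. Concretely, I would reserve for each $n$ an infinite computable list of disjoint intervals $I_{n,0}, I_{n,1}, \ldots$ of $\omega$; the "active" interval for $n$ will be some $I_{n,j(n)}$, where the true copy of $B(n)$ is coded, while all earlier intervals $I_{n,i}$ for $i < j(n)$ are zeroed out (or marked "dead" by a flag bit). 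A reduction then computes $B(n)$ from $A$ by searching the list $I_{n,0}, I_{n,1}, \ldots$ for the first live block and reading off its value; since $j(n)$ is finite, this succeeds with finitely many oracle bits, and — crucially — a cofinite error in the oracle for $A$ only kills finitely many blocks total, hence the search still terminates correctly on cofinitely many $n$. This gives $B \leq_\cf A$. The point of having the active block be far out in the list is that a use-bounded-from-below reduction is \emph{forbidden} from querying small bits of $A$ on large inputs; so if we can, during the construction, push the active block for some large $n$ down to a small interval $I_{n,0}$ that $\Phi_e$ has already "committed" to ignoring, we defeat $\Phi_e$.

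The diagonalization against $R_e$ ("$\Phi_e$ is not a ubfb reduction of $B$ to $A$") runs as follows. We may assume $\Phi_e^A = B$, or we are done. Being a ubfb reduction means: for every bound $m$, for all sufficiently large $n$, the computation $\Phi_e^A(n)$ does not query any $A(j)$ with $j < m$. So if $\Phi_e$ really is a ubfb reduction, there is a "promise" function: for each $m$ some threshold $t_e(m)$ beyond which small bits are untouched. The strategy for $R_e$ picks a large fresh witness $n$ and a small index $m$ lying inside an early interval $I_{n,i}$ that we have so far left in a neutral state; it then waits for a stage where $\Phi_e^{\sigma}(n)\!\downarrow$ for the current approximation $\sigma$ to $A$ \emph{without querying any bit below $m$} (if this never happens, $\Phi_e^A(n)$ either diverges — contradicting $\Phi_e^A = B$ being total — or queries below $m$ cofinitely often, which we can also exploit). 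Once such a convergent computation appears, with value $v$, we lock in the oracle on the queried bits, and then \emph{set the active coding block for $n$ to be an early interval disjoint from everything $\Phi_e^{\sigma}(n)$ queried}, and code into it the value $1 - v$. Now $B(n) = 1-v \neq v = \Phi_e^A(n)$, so $\Phi_e^A \neq B$ after all. The construction interleaves these strategies with a priority ordering; lower-priority strategies respect the finitely many coding decisions and oracle commitments of higher-priority ones, and since each $R_e$ acts only finitely often, every column eventually settles, so $A$ is well-defined, each bit of $B$ has a genuine finite active block, and $B \leq_\cf A$ via the search described above.

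The main obstacle, and where care is needed, is the tension between the global structure required for $B \leq_\cf A$ and the local surgery required for diagonalization: the cofinite reduction needs the "search the list for the first live block" algorithm to be \emph{robust}, i.e. insensitive to finitely many oracle errors, which forces the invariant that at every stage each column has exactly one live block and all earlier blocks are permanently dead, with this status being decidable. Meanwhile, the $R_e$-strategy wants to \emph{move} the live block of a large $n$ downward into an early interval — but it must do so without having previously committed that early interval to "dead" status in a way the reduction algorithm would see, and without a higher-priority strategy having already frozen bits there. I would handle this by a bookkeeping device: for each $n$, reserve $I_{n,0}$ as a "special" slot usable by at most one $R_e$, assigned when $n$ is chosen as a witness for $R_e$; until then $I_{n,0}$ is in a "pending" state that the reduction treats as "not yet live, keep searching" rather than "dead", so that when $R_e$ later activates it the change is consistent. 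The delicate verification is that this three-valued (pending / live / dead) block-status scheme is still decidable and still yields a cofinite-error-tolerant decoding — once an error budget of finitely many bits is spent, only finitely many blocks across all columns are corrupted, so for cofinitely many $n$ the decoding reads the correct active block. The rest (each requirement acts finitely often; the construction is well-defined; $\Phi_e^A = B$ fails or ubfb fails for each $e$) is then routine.
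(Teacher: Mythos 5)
There is a genuine gap, and it is structural: your coding scheme, as described, would itself prove $B \leq_\ubfb A$, defeating the whole point. You reserve for each $n$ an infinite family of pairwise disjoint intervals $I_{n,0}, I_{n,1}, \ldots$ (disjoint also across different $n$), and the decoding of $B(n)$ searches only inside $\bigcup_i I_{n,i}$. But since all these intervals are pairwise disjoint, $\min(I_{n,0}) \to \infty$ as $n \to \infty$. Therefore the very decoding functional $\Phi$ you describe satisfies: for every $m$, for all sufficiently large $n$, $\Phi^A(n)$ queries no bit of $A$ below $m$. Together with totality (you explicitly ensure $j(n)$ is finite so the search halts on every $n$), this is exactly a use-bounded-from-below reduction of $B$ to $A$. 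So the construction certifies $B \leq_\ubfb A$ rather than refuting it. The same confusion shows up in the diagonalization strategy: you want a ``small index $m$ lying inside an early interval $I_{n,i}$'' for a ``large fresh witness $n$'' --- but $\min(I_{n,i}) \geq \min(I_{n,0})$, which is large precisely because $n$ is large and the intervals are disjoint. There simply is no small $m$ to pick. Also, for a single fixed witness, ``$\Phi_e^A(n)$ queries below $m$'' is a one-shot event, not something that can happen ``cofinitely often''; to turn repeated low queries into a violation of use-bounded-from-below you need the queried position to be a \emph{fixed} $m$ hit by $\Phi_e^A(n)$ for infinitely many $n$, and your per-column layout rules that out.

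The missing idea is that the coding locations must be \emph{shared} across infinitely many bits of $B$: there needs to be a small, fixed location whose content is decisive for infinitely many outputs, so that a reduction is forced either to consult it unboundedly often (killing use-bounded-from-below) or to fall back on per-bit information that the construction withholds. This is precisely what the paper's ``master deduction'' bits $a_{i,0}, a_{i,1}$ accomplish: for a fixed $i$, they govern all of $B(\langle i, j\rangle)$ for $j \in \omega$, while the ``individual deduction'' bits $a_{i,j,k,b}$ play the role of your private per-bit blocks. The cofinite reduction queries both kinds simultaneously and hence is \emph{not} use-bounded-from-below (it keeps returning to $a_{i,0}, a_{i,1}$); the diagonalization then forces any candidate $\Phi_e$ into the dilemma: query the fixed master bit infinitely often, or settle on some column $\langle i_e, j_e\rangle$ whose individual bits are kept out of $A$, so $\Phi_e^A$ diverges there. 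Your pending/live/dead bookkeeping does not address this, because it never introduces the needed sharing of low-lying coding locations across infinitely many inputs.
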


\begin{proof}
We shall construct the set~$A$, and define~$B$ from it. Thus, before proceeding to the construction, we describe this definition. We partition~$\omega$ as
\[
\{a_{i,0} : i \in \omega \} \cup \{a_{i,1} : i \in \omega \} \cup \{a_{i,j,k,0} : i,j,k \in \omega \} \cup \{a_{i,j,k,1} : i,j,k \in \omega \},
\]
and build~$A$ to ensure the following properties hold for every~$i$:
\begin{enumerate}
\item~$A$ contains at most one of~$a_{i,0}$ and~$a_{i,1}$;
\item if~$a_{i,0} \in A$ then~$a_{i,j,k,1} \notin A$ for all~$j, k$;
\item if~$a_{i,1} \in A$ then~$a_{i,j,k,0} \notin A$ for all~$j, k$;
\item if~$a_{i,j,k,0} \in A$ for some~$j, k$, then~$a_{i,j,k',1} \notin A$ for all~$k'$;
\item if~$a_{i,j,k,1} \in A$ for some~$j, k$, then~$a_{i,j,k',0} \notin A$ for all~$k'$;
\item for almost every~$j$ there is a~$k$ with~$a_{i,j,k,0} \in A$ or~$a_{i,j,k,1} \in A$;
\item if~$a_{i,0}, \; a_{i,1} \notin A$, then for every~$j$ there is a~$k$ with~$a_{i,j,k,0} \in A$ or~$a_{i,j,k,1} \in A$.
\end{enumerate}
We regard~$B$ as a set of pairs, and define it from~$A$ by the rules below. For all~$i$:
\begin{itemize}
\item if~$a_{i,0} \in A$ then~$\seq{i,j} \notin B$ for all~$j$;
\item if~$a_{i,1} \in A$ then~$\seq{i,j} \in B$ for all~$j$;
\item for every~$j$, if there is a~$k$ with~$a_{i,j,k,0} \in A$, then~$\seq{i,j} \notin B$;
\item for every~$j$, if there is a~$k$ with~$a_{i,j,k,1} \in A$, then~$\seq{i,j} \notin B$.
\end{itemize}
Intuitively, we are think of the values of the bits of~$B$ as being \emph{deduced} from the values of certain corresponding bits of~$A$. Namely, for each~$i$, the bits~$a_{i,0}$ and~$a_{i,1}$ represent a kind of \emph{master deduction procedure} for determining the values of the collection of bits~$\seq{i,0}, \seq{i,1}, \ldots$, while for all~$i$ and~$j$, each of the bits~$a_{i,j,k,0}$ and~$a_{i,j,k,1}$ represents an \emph{individual deduction procedure} for determining the value of the single bit~$\seq{i,j}$. Properties~1--5 above ensure that none of the deduction procedures contradict each other, and hence that the definition of~$B$ is consistent. Property 6 will ensure that the intended reduction is a cofinite reduction. Property 7 ensures that the above definition determines~$B$ completely.

In fact, we can turn this definition into a cofinite reduction. Let~$\Phi$ be the functional which with (possibly partial) oracle~$C$ and input~$\seq{i,j}$ queries the bits~$a_{i,0}$ and~$a_{i,1}$ of~$C$, whilst simultaneously querying the bits~$a_{i,j,k,0}$ and~$a_{i,j,k,1}$ for all~$k$ one by one. If and when it finds some such bit to be in~$C$, it halts and outputs an answer in accordance with our definition of~$B$ above. Properties~2,~3,~and~7 ensure that~$\Phi^A = B$. The remaining properties, together with the fact that we are querying (bits representing) master and individual deduction procedures simultaneously, ensure that if~$(A)$ is a partial oracle for~$A$ with cofinite domain then~$\Phi^{(A)} \simeq B$ with cofinite domain. Indeed, say~$m$ is such that~$(A)(n) \downarrow = A(n)$ for all~$n > m$. For cofinitely many triples~$i$,~$j$, and~$k$, each of the numbers~$a_{i,0}$,~$a_{i,1}$,~$a_{i,j,k,0}$, and~$a_{i,j,k,1}$ is greater than~$m$, so for cofinitely many pairs~$i$ and~$j$ the reduction~$\Phi^{(A)}$ will halt on~$\seq{i,j}$. Thus,~$B \leq_\cf A$.

We now turn to the construction of~$A$.

\medskip
\noindent \emph{Construction.} We must ensure that for all~$e$, the functional~$\Phi_e$ does not witness a uniform-bounded-from-below reduction of~$B$ to~$A$. The idea is as follows. First, we reduce to the case where~$\Phi_e$ computes~$B$ from~$A$ essentially via the definition of~$B$ given above, that is, by looking for master deductions and individual deductions. Otherwise, we argue that~$\Phi^A_e \neq B$. Then, we fix a number~$i$ and add one of the master deduction~$a_{i,0}$ or~$a_{i,1}$ to~$A$. So one way for~$\Phi^A_e$ to correctly compute the value of~$\seq{i,j}$ for a given~$j$ is to query this master deduction, but of course if it does this for infinitely many~$j$ then its use will not be bounded from below. Our strategy, then, is to pick a new~$j$ every time one of the master deductions is queried, and keep all of its individual deductions~$a_{i,j,k,0}$ and~$a_{i,j,k,1}$ out of~$A$ until one of the master deductions is queried again. So if indeed~$\Phi^A_e$ eventually stops querying the master deductions, meaning that we settle on a permanent value of~$j$, it will not be able to converge on~$\seq{i,j}$. At the same time, we permanently restrain individual deductions for at most one~$j$, thereby ensuring that~$A$ satisfies the properties above.

Formally, we proceed by stages. At each stage~$s$, we shall have a finite set~$I_s$ of numbers smaller than~$s$, and for each~$e \in I_s$, unique numbers~$i_e$ and~$j_{e,s}$. For~$e \in I_s$, we say the bits of the form~$a_{i_e,j_{e,s},k,0}$ and~$a_{i_e,j_{e,s},k,1}$ are \emph{$(e,s)$-forbidden}. A string~$\sigma$ will be called \emph{valid} at stage~$s$ if:
\begin{itemize}
\item the range of~$\sigma$ does not violate properties 1--5 above if~$\sigma$ is regarded as an initial segment of~$A$;
\item if the range of~$\sigma$ contains an~$(e,s)$-forbidden number~$n$ for some~$e \in I_s$, then~$\Phi^{\sigma \res n}_e(\seq{i_e,j_{e,s}}) \downarrow$ and the computation queries either~$a_{i_e,0}$ or~$a_{i_e,1}$.
\end{itemize}
We obtain our set~$A$ as a union~$\bigcup_s \sigma_s$ of valid strings.

Initially, let~$\sigma_0 = \emptyset$, and suppose inductively that for some~$s \geq 0$, we are given~$\sigma_s$,~$I_s$, and for each~$e \in I_s$, the numbers~$i_e$ and~$j_{e,s}$. By replacing~$\sigma_s$ with a valid extension if necessary, we may assume that for each~$e$ in the finite set~$I_s$, either no valid extension of~$\sigma_s$ has an~$(e,s)$-forbidden element in its range, or~$\sigma_s$ does so already. Let~$i_s$ be least so that for all~$i \geq i_s$ and all~$j$ and~$k$, none of the numbers~$a_{i,0}$,~$a_{i,1}$,~$a_{i,j,k,0}$ or~$a_{i,j,k,1}$ are smaller than~$|\sigma_s|$. In other words, we have not yet determined the values of these bits in~$A$.

We begin by extending~$\sigma$ to a string~$\widetilde{\sigma}$, considering two cases.

\medskip
\noindent \emph{Case~1:} there is a valid~$\sigma \succeq \sigma_s$ such that for some~$j$ and some~$b \in \{0,1\}$, we have that~$\Phi^\sigma_s(\seq{i_s,j}) \downarrow = b$, and if the computation queries~$a_{i_s,b}$ or~$a_{i_s,j,k,b}$ for some~$k$ then this bit is not in~$A$. Let~$I_{s+1} = I_s$, and fix some such~$\sigma$,~$j$, and~$b$. Let~$\widetilde{\sigma}$ be obtained by deleting any element of the form~$a_{i_s,b}$ or~$a_{i_s,j,k,b}$ from the range of~$\sigma$, and appending some~$a_{i_s,j,k,1-b} \geq |\sigma|$. Then~$\widetilde{\sigma}$ is a valid extension of~$\sigma_s$ by choice of~$i_s$, and~$\Phi^{\widetilde{\sigma}}_s(\seq{i_s,j})$ converges and agrees with~$\Phi^\sigma_s(\seq{i_s,j})$ by choice of~$\sigma$. Note that by construction,~$\Phi^{\widetilde{\sigma}}_s$ disagrees with~$B$ on~$\seq{i_s,j}$.

\medskip
\noindent \emph{Case~2:} otherwise. Let~$I_{s+1} = I_s \cup \{s\}$, and define~$j_{s,s+1} = 0$. Let~$\widetilde{\sigma}$ be the least extension of~$\sigma_s$ with either~$a_{i_s,0}$ or~$a_{i_s,1}$ in its range.

\medskip
Next, consider any~$e \in I_s$ such that~$\widetilde{\sigma}$ contains an~$(e,s)$-forbidden element in its range. Choose the least~$j > j_{e,s}$ such that no element of the form~$a_{i_e,j,k,0}$ or~$a_{i_e,j,k,1}$ is smaller than~$|\widetilde{\sigma}|$, and let~$j_{e,s+1} = j$. Thus, no~$(e,s)$-forbidden element is~$(e,s+1)$-forbidden.

Finally, let~$\sigma_{s+1}$ be the least valid extension of~$\widetilde{\sigma}$ such that for all~$e \leq s$ and all~$\seq{i,j} \leq s$ different from~$\seq{i_e,j_{e,s+1}}$, the range of~$\sigma_{s+1}$ contains~$a_{i,j,k,0}$ or~$a_{i,j,k,1}$ for some~$k$.

\medskip
\noindent \emph{Verification.} By definition of validity, it follows that~$A = \bigcup_s \sigma_s$ satisfies properties~1--5 above. The last step at stage~$s+1$ of the construction ensures that if~$\seq{i,j} \leq s$ and~$j \neq j_{e,s}$ for any~$e \in I_s$ then some element of the form~$a_{i,j,k,0}$ or~$a_{i,j,k,1}$ gets added to~$A$. It follows that if~$i \neq i_e$ for any~$e$, or if~$i = i_e$ and~$\lim_s j_{e,s}$ does not exist, then this happens for all~$j$. But if~$i = i_e$ and then~$a_{i,0}$ or~$a_{i,1}$ is added to~$A$, so if the~$j_{e,s}$ comes to a limit~$j_e$ then the above happens for all~$j \neq j_e$. Thus, properties~6 and~7 hold as well.

To finish the proof, fix any functional~$\Phi_e$. If Case~1 of the construction applies at stage~$e+1$ of the construction, then we ensure that~$\Phi_e^A \neq B$. If Case~2 applies, then~$\Phi^A_e(\seq{i_e,j})$ can only converge to some~$b \in \{0,1\}$ if it queries either~$a_{i_e,b}$ or~$a_{i_e,j,k,b}$ for some~$k$, and finds this bit present in~$A$. In this case, there are two possibilities. If the~$j_{e,s}$ do not come to a limit during the construction, then one of~$a_{i_e,0}$ or~$a_{i_e,1}$ must be queried infinitely often, so~$\Phi_e^A$ is not a uniform-bounded-from-below reduction. If, on the other hand,~$j_{e,s}$ do come to some limit~$j_e$, then neither~$a_{i_e,0}$ nor~$a_{i_e,1}$ can be queried after~$j_{e,s}$ assumes its final value,~$j_e$. But then the construction ensures that neither~$a_{i_e,j_e,k,0}$ nor~$a_{i_e,j_e,k,1}$ are in~$A$, so~$\Phi^A_e(\seq{i_e,j_e})$ cannot converge. We conclude that in any case,~$\Phi_e^A$ is not a uniform-bounded-from-below computation of~$B$.
\end{proof}

We finish this section with the following straightforward result.

\begin{proposition}
There exist~$A$ and~$B$ such that~$B \leq_T A$ but~$B \nleq_\cf A$.
\end{proposition}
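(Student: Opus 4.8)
The plan is to build a set $A$ together with $B = \Phi_e^A$ for various $\Phi_e$ by a diagonalization that kills every candidate cofinite reduction while keeping $B$ Turing computable from $A$. Since the difference between $\leq_\cf$ and $\leq_T$ is exactly the uniformity demand — a single $\Phi$ must handle every partial oracle for $A$ with cofinite domain — the strategy is to exploit non-uniformity. The simplest route is to make $A$ a set from which $B$ is Turing computable only \emph{non-uniformly}: e.g.\ let $B = A'$ (or something coding a fixed non-computable set into $A$ in a way that can be recovered with finitely much advice), or build $A$ directly so that $B \leq_T A$ via a reduction that depends on a single bit of extra information that a cofinite oracle for $A$ can lose.

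Concretely, I would first set up the coding: partition $\omega$ into columns and arrange that $B$ lives on one sequence of columns while $A$ has a ``flag'' bit that tells the decoder which of finitely many reduction procedures is correct; a cofinite oracle for $A$ is entitled to omit that flag bit, so no fixed functional $\Phi$ can succeed uniformly, yet a genuine total oracle for $A$ knows the flag and hence $B \leq_T A$. Then I would run the usual stage-by-stage construction of $A$ by finite approximation: at stage $e$, assuming $\Phi_e^A = B$ (else we are already done by the standard outline used in the earlier propositions — force $\Phi_e$ to equal the intended definition $\Delta$ or to make a mistake), I would exhibit a partial oracle $(A)$ for $A$ with cofinite domain on which $\Phi_e^{(A)}$ either fails to converge somewhere it should, or converges to a wrong value, using the freedom to delete from $(A)$ exactly the finitely many bits that $\Phi_e$ relies on to distinguish $B$. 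The verification then checks that $A$ indeed satisfies the coding constraints at every column and that $B \leq_T A$ via the flag-dependent reduction.

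The main obstacle I expect is the interaction between the requirement ``$\Phi_e^A = B$'' (which the definition of $\leq_\cf$ forces on any witnessing functional, and which we must assume in order to diagonalize) and the need to later remove bits from $A$ to form the bad partial oracle: I must ensure that the bits I delete are genuinely not needed to recover $B$ \emph{correctly} — only to recover it \emph{uniformly} — so that deleting them doesn't accidentally leave $\Phi_e^{(A)}$ both total and correct. Managing this is exactly the ``autoreducibility''-flavored tension already seen in Proposition~\ref{P:mrnimpcf}, so I would reuse that idea: pick $A$ non-autoreducible, or more precisely arrange that $B$'s definition depends on a bit of $A$ that no functional querying only cofinitely much of $A$ can be guaranteed to see, while any total oracle for $A$ trivially sees it. Once that is in place the remainder is the routine bookkeeping of a finite-approximation argument, analogous to the constructions in the preceding subsection.
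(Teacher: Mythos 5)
Your proposal misses that this proposition is an immediate corollary of results already established in the paper, and instead sketches a from-scratch construction with acknowledged unresolved obstacles. The paper's proof is a single line: Proposition~\ref{P:mrnimpcf} produces sets $A$, $B$ with $B \leq_\mr A$ but $B \nleq_\cf A$, and since $\leq_\mr \implies \leq_T$ (Theorem~\ref{T:imps}, part~1), the same pair witnesses $B \leq_T A$ and $B \nleq_\cf A$. You actually circle quite close to this — you explicitly name Proposition~\ref{P:mrnimpcf} and the ``autoreducibility-flavored tension'' from its proof, and you identify that $A$ should be chosen non-autoreducible so that no single bit of $A$ can be recovered from the rest — but then you propose to redo the construction (columns, a flag bit, stage-by-stage diagonalization) rather than just observing that the $A$ and $B = \mathcal{R}(A)$ from Proposition~\ref{P:mrnimpcf} already work.

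Beyond the unnecessary heaviness, there is a concrete error at the outset: the suggestion ``let $B = A'$'' cannot work, since in general $A' \nleq_T A$, so this $B$ would fail the very condition $B \leq_T A$ you are trying to secure. The parenthetical hedge does not give a usable replacement. Your later flag-bit scheme is also not actually carried out — you describe the shape of a construction, name the obstacle (ensuring that deleting bits doesn't leave $\Phi_e^{(A)}$ total and correct), and defer its resolution to ``routine bookkeeping.'' For that obstacle the right resolution is precisely the non-autoreducibility of $A$, which is exactly the content of Proposition~\ref{P:mrnimpcf}; once you see that, the cleanest move is to cite that proposition together with $\leq_\mr \implies \leq_T$ and stop.
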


\begin{proof}
This follows from Proposition \ref{P:mrnimpcf} since~$\leq_\mr$ implies~$\leq_T$ but not~$\leq_\cf$.
\end{proof}

\section{A maximal pair of infinite-information degrees}\label{S:inf}

In this final section, we take a further look at~$\leq_\ii$, which turns out to be arguably the most curious of our reducibilities.  As we already saw in Proposition~\ref{P:iibasics}, the~$1$-degrees embed backwards into the infinite-information degrees, and the usual join operator provides a meet in the infinite-information degrees.

In fact, it turns out that unlike most reducibilities studied in the literature,~$\leq_\ii$ simply does not admit a join operator at all. In other words, the power set of~$\omega$ ordered by~$\leq_\ii$ is not an upper semi-lattice.

\begin{theorem}\label{T:maxpair}
There exist sets~$B_0$ and~$B_1$ for which there is no set~$A$ such that~$B_0 \leq_\ii A$ and~$B_1 \leq_\ii A$.
\end{theorem}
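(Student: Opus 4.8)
The plan is to construct $B_0$ and $B_1$ by a finite-extension argument, diagonalizing against all pairs of functionals $(\Phi_e, \Psi_e)$ that might witness $B_0 \leq_\ii A$ and $B_1 \leq_\ii A$ for some common $A$. The key conceptual point is that a witness to $B_0 \leq_\ii A$ must behave well on \emph{every} partial oracle for $A$ with infinite domain, so in particular on the extremely sparse ones. The idea is that for any candidate $A$, we can feed $\Phi_e$ a partial oracle for $A$ that reveals bits of $A$ only very slowly and at our discretion; since $\Phi_e^{(A)}$ must still have infinite domain and agree with $B_0$, this forces $\Phi_e$ to commit to values of $B_0$ after seeing only small, adversarially-chosen finite pieces of $A$. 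The same holds for $\Psi_e$ and $B_1$. The contradiction we aim for: for a single finite piece $\tau$ of a potential oracle, $\Phi_e^\tau$ and $\Psi_e^\tau$ between them commit to some bit $B_0(n_0)$ and some bit $B_1(n_1)$; but since $B_0$ and $B_1$ are \emph{our} objects to define, and the commitment depends only on $\tau$ (not on which $A \supseteq \tau$ we have in mind), we have freedom to set those bits however we like later, as long as we are careful that the same $\tau$ is not forced to serve two masters inconsistently.

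More precisely, here are the steps I would carry out. First, set up a bookkeeping of requirements $R_e$: for the $e$th pair $(\Phi_e, \Psi_e)$ and every set $A$, it is not the case that both $\Phi_e$ witnesses $B_0 \leq_\ii A$ and $\Psi_e$ witnesses $B_1 \leq_\ii A$. Second, for a fixed $e$, argue the following combinatorial dichotomy about $(\Phi_e,\Psi_e)$ relative to the part of $B_0, B_1$ built so far: either (a) there exist finite strings $\tau$ (thought of as a finite partial oracle, in the formal sense of a set of triples $\langle n,x,l\rangle$) and an input $n_0$ such that $\Phi_e^{\tau}(n_0)\downarrow$, and we can choose $B_0(n_0)$ to disagree with that value — and similarly we can always \emph{extend} any partial oracle by adding more triples, so no later growth of the oracle can rescue $\Phi_e$; or (b) $\Phi_e$ is so cautious that there is a partial oracle with infinite domain on which $\Phi_e$ has finite domain, again killing the requirement. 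The real work is showing case (a) can be arranged: because $\Phi_e^{(A)}$ must have infinite domain for every infinite-domain partial oracle $(A)$ for $A$, and because a finite partial oracle $\tau$ can be extended to an infinite-domain partial oracle for \emph{many} different sets $A$, if $\Phi_e$ never converges on any such $\tau$ to a ``fresh'' input then we can build an infinite-domain oracle on which it has finite domain. Third, having located such a $\tau$, $n_0$ (for $\Phi_e$) and correspondingly $\tau'$, $n_1$ (for $\Psi_e$), we must reconcile: the subtlety is that $\tau$ and $\tau'$ are partial oracles for a common $A$, so they must be \emph{compatible as partial oracles} (no triple $\langle n,0,l\rangle$ in one and $\langle n,1,l'\rangle$ in the other for the same $n$). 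We choose $\tau$ and $\tau'$ to have a common extension $\rho$ that is a valid partial oracle, define $B_0(n_0)$ and $B_1(n_1)$ to diagonalize, and move to the next requirement with $\rho$ (or rather a record of the finitely many values of $B_0, B_1$ committed) as the new initial condition. Finitely many bits of $B_0$ and $B_1$ are decided at each step; all others are set to $0$ at the end.

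The main obstacle I expect is step two for the ``cautious'' functional: constructing, for a given $\Phi_e$, a \emph{single} partial oracle $(A)$ with infinite domain on which $\Phi_e^{(A)}$ fails — either by having finite domain or by converging somewhere incorrectly — \emph{while keeping $A$ itself (or at least the set of sets $A$ compatible with our partial oracle) large enough} that the same move does not conflict with the $\Psi_e$ side. The point is that the oracle's domain must be infinite but the \emph{values} on that domain are ours to choose subject only to consistency, so there is a genuine tension: making the domain grow forces $\Phi_e$ to eventually converge somewhere (else it violates the infinite-domain output requirement), and each such convergence is a commitment we must be able to contradict using a free bit of $B_0$. I expect the argument to hinge on the fact that $B_0$ is under construction, so ``free bit'' is always available: we simply declare the input on which $\Phi_e$ first converges (beyond what is already committed) to be a diagonalization point and set $B_0$ there to the opposite value. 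Making all of this uniform over the pair $(\Phi_e,\Psi_e)$ — ensuring the finite partial oracle used against $\Phi_e$ and the one used against $\Psi_e$ amalgamate into one legitimate partial oracle for one set $A$ — is where the care is needed, but it should go through because ``partial oracle for $A$'' only constrains the triples present, never requires any particular triple to be present, so two finite partial oracles with disjoint or consistent domains always amalgamate.
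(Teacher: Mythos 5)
Your proposal takes an entirely different route from the paper. The paper decomposes the theorem into Lemma~\ref{L:maxpair1} (constructing $B_0, B_1$ so that for every computable $\alpha$, neither $B_0 \from_\ii B_1 \oplus \emptyset^{(\alpha)}$ nor $B_1 \from_\ii B_0 \oplus \emptyset^{(\alpha)}$) and Lemma~\ref{L:maxpair2} (if $B \leq_\ii A$ then either $B \from_\ii \emptyset^{(\alpha)}$ or $A \from_\ii B \oplus \emptyset^{(\alpha)}$ for some computable $\alpha$), the latter proved via a transfinite ``$\alpha$-deduction'' argument ascending through the hyperarithmetic hierarchy. Your plan is instead a direct finite-extension diagonalization against pairs $(\Phi_e, \Psi_e)$, with no hyperarithmetic content.

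There is a genuine gap, and it is in how you handle the universal quantifier over $A$. Your requirement $R_e$ has the form ``for \emph{every} $A$, not both $\Phi_e$ witnesses $B_0 \leq_\ii A$ and $\Psi_e$ witnesses $B_1 \leq_\ii A$.'' When you locate a finite partial oracle $\rho$ on which $\Phi_e, \Psi_e$ converge on fresh inputs $n_0, n_1$ and set $B_0(n_0), B_1(n_1)$ to disagree, you have defeated the pair only for those $A$ with which $\rho$ is consistent as a partial oracle. For any $A$ disagreeing with $\rho$ on even one bit of $\rho$'s domain, $\rho$ is simply not a partial oracle \emph{for $A$}, so the condition ``$\Phi_e^{(A)} \simeq B_0$ for all infinite-domain $(A)$ for $A$'' never mentions $\rho$ or its extensions, and your move tells us nothing about that $A$. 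The same applies to your case~(b): the infinite-domain oracle on which $\Phi_e$ has finite domain is a partial oracle for one particular $A$, determined by the values you fill in, not for all. So a single $R_e$ secretly unfolds into a requirement for each of continuum-many $A$, and incompatible $\tau_1, \tau_2$ (each compatible with some $A$) can give $\Phi_e^{\tau_1}(n)\downarrow=0$ and $\Phi_e^{\tau_2}(n)\downarrow=1$, which cannot both be diagonalized at bit $B_0(n)$. Nothing in the sketch explains how countably many finite-extension moves can cover Cantor space of candidate $A$'s while also interleaving the remaining requirements. The paper's $\alpha$-deduction machinery is exactly the device that converts the mere \emph{existence} of an arbitrary upper bound $A$ into a concrete hyperarithmetic bound on the complexity of $B$ or of $A$ relative to $B$; and the authors explicitly remark at the end of Section~5 that they do not know whether this machinery can be avoided, which is precisely the question your proposal, as written, does not settle.
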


\noindent To coin a phrase based on standard terminology, the infinite-information degrees thus admit \emph{maximal pairs}. Just as in the Turing degrees there are minimal pairs, which are so simple that they have no common information content, maximal pairs here are so complex that no set possesses common infinite-information content about them.

The complexity alluded to above will be made apparent by the proof of Theorem \ref{T:maxpair}. This consists of two technical results, Lemmas \ref{L:maxpair1} and \ref{L:maxpair2} below, employing the following definition.

\begin{definition}\label{D:nleqii}
For sets~$A$ and~$B$, we write~$B \from_\ii A$ if every~$C \geq_T A$ can produce an infinite-information computation of~$B$, i.e., if there is a Turing functional~$\Phi$ with~$\Phi^C \simeq B$ with infinite domain.
\end{definition}

\noindent It is not difficult to show that as a relation on sets,~$\from_\ii$ need not be transitive. However, it does enjoy the following limited form of transitivity with respect to~$\leq_\ii$: if~$C \leq_\ii B \from_\ii A$ then~$C \from_\ii A$. Indeed, if~$\Phi$ witnesses that~$C \leq_\ii B$ and~$\Psi$ witnesses that~$B \from_\ii A$, then~$\Phi \circ \Psi$ will witness that~$C \from_\ii A$. We shall make use of this fact in the sequel. Beyond this,~$\leq_\ii$ and~$\from_\ii$ appear ostensibly quite similar. But in fact the two differ in another key way. Namely, if~$B \from_\ii A$ and~$C$ is any set then necessarily~$B \from_\ii A \oplus C$. As Proposition \ref{P:iibasics} shows, this is not true of~$\leq_\ii$.

The first lemma below is quite straightforward. The second, by contrast, turns out to require a substantial amount of work, and we defer its proof for the time being.

\begin{lemma}\label{L:maxpair1}
There exist sets~$B_0$ and~$B_1$ such that for every~$\alpha<\omega_1^{CK}$, we have that~$B_0 \nleftarrow_\ii B_1 \oplus \emptyset^{(\alpha)}$ and~$B_1 \nleftarrow_\ii B_0 \oplus \emptyset^{(\alpha)}$.
\end{lemma}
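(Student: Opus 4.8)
The plan is to build $B_0$ and $B_1$ by a single finite-extension (forcing-style) construction, meeting one requirement for each pair $(\Phi, \alpha)$ where $\Phi$ is a Turing functional and $\alpha < \omega_1^{CK}$. Note first that it suffices to diagonalize against all pairs $(\Phi, C)$ where $C \geq_T \emptyset^{(\alpha)}$ for some $\alpha < \omega_1^{CK}$; but since there are class-many such $C$, we instead exploit the following observation: if $\Phi^C \simeq B_0$ with infinite domain for \emph{every} $C \geq_T B_1 \oplus \emptyset^{(\alpha)}$, then in particular this holds for $C = B_1 \oplus \emptyset^{(\alpha)}$ itself. So it is enough, for each $\Phi$ and each $\alpha$, to ensure that $\Phi^{B_1 \oplus \emptyset^{(\alpha)}}$ is \emph{not} a partial function extending $B_0$ with infinite domain — and symmetrically with the roles of $B_0$ and $B_1$ swapped. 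Wait: that is too weak, because $B_1$ is being built, so $B_1 \oplus \emptyset^{(\alpha)}$ is not a fixed oracle. The right move is to think of the requirement as: for each $\Phi, \Psi$ (functionals) and $\alpha$, ensure that $\Psi$ does not reduce $B_1 \oplus \emptyset^{(\alpha)}$ to some infinite partial oracle whose image under $\Phi$ is $B_0$; more directly, we diagonalize so that for every $\Phi$ and every $\alpha$, $\Phi^{B_1 \oplus \emptyset^{(\alpha)}}$ fails to be an infinite-domain partial function $\simeq B_0$. This is a $\Pi^0_2(\emptyset^{(\alpha)})$-type condition relative to the eventual $B_1$, and since we control $B_0$ entirely, we can always win by fixing a fresh bit $n$ of $B_0$ not yet committed and setting it opposite to $\Phi^{B_1 \oplus \emptyset^{(\alpha)}}(n)$ — \emph{provided} that value converges. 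The subtlety is the case where $\Phi^{B_1 \oplus \emptyset^{(\alpha)}}$ has finite domain, in which case the requirement is already satisfied trivially, but we must be able to \emph{recognize} which case we are in.

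The key device is to run the construction relative to an oracle powerful enough to decide the relevant convergence facts. Concretely, I would enumerate requirements $R_{\langle \Phi, \Psi\rangle}$, one for each pair of functionals, where $R_{\langle\Phi,\Psi\rangle}$ handles the possibility that $\Phi$ witnesses $B_0 \from_\ii B_1 \oplus \emptyset^{(\alpha)}$ via partial oracle computations $\Psi$; but since $\alpha$ ranges over all of $\omega_1^{CK}$ and these are only countably many, we fold the hyperarithmetic hierarchy in by working, at the stage devoted to the $\alpha$-th requirement, relative to $\emptyset^{(\alpha)}$ (or a sufficiently large jump). Because there are only countably many stages and each individual stage only needs to ask finitely many questions about a single $\emptyset^{(\alpha)}$ with $\alpha < \omega_1^{CK}$, the whole construction, while not computable, is perfectly well-defined classically. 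At each stage we have committed only a finite initial segment of each of $B_0, B_1$; to meet $R$ for $(\Phi, \alpha)$ trying to show $B_0 \from_\ii B_1 \oplus \emptyset^{(\alpha)}$, we look for some extension $\tau$ of the current commitment to $B_1$ and some $n$ beyond the current commitment to $B_0$ such that $\Phi^{\tau \oplus (\emptyset^{(\alpha)}\res u)}(n)\downarrow$ for a use $u$; if such $\tau, n$ exist, extend $B_1$ to $\tau$, then set $B_0(n)$ opposite to that output, killing the reduction. If no such $\tau, n$ exist, then \emph{no matter how $B_1$ is later extended}, $\Phi^{B_1 \oplus \emptyset^{(\alpha)}}$ converges only on inputs already committed in $B_0$, hence has finite domain, so the requirement is met vacuously and we do nothing. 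The symmetric requirements (swapping $B_0, B_1$) are handled the same way.

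The main obstacle — and the reason this lemma, though ``quite straightforward'' per the authors, still needs care — is the interaction between the two families of requirements: when I extend $B_1$ to satisfy a requirement about $B_0 \from_\ii B_1 \oplus \emptyset^{(\alpha)}$, I am committing bits of $B_1$ that a later requirement about $B_1 \from_\ii B_0 \oplus \emptyset^{(\beta)}$ will have to respect, and vice versa. But there is no genuine conflict: each requirement, when it acts, extends \emph{one} of the two sets by a finite string and fixes \emph{one} fresh bit of the other; it never needs to retract anything, and the ``fresh bit'' can always be chosen beyond everything committed so far. So the construction is a straightforward finite-injury-free (indeed, finite-extension) argument, and the only thing to verify in the end is that the limit sets $B_0 = \bigcup_s \sigma^0_s$ and $B_1 = \bigcup_s \sigma^1_s$ are total — which follows because at infinitely many stages we are free to extend each by one more bit — and that each requirement is genuinely met, which is the dichotomy argument above. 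I would then observe that ``$\Phi^{B_1 \oplus \emptyset^{(\alpha)}}$ has finite domain or makes an error on $B_0$'' says exactly $B_0 \nleftarrow_\ii B_1 \oplus \emptyset^{(\alpha)}$, since $B_1 \oplus \emptyset^{(\alpha)} \geq_T B_1 \oplus \emptyset^{(\alpha)}$ trivially and $\from_\ii$ demands \emph{every} $C \geq_T B_1 \oplus \emptyset^{(\alpha)}$ — in particular this one — produce an infinite-information computation of $B_0$. This completes the lemma.
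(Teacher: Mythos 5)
Your proposal is correct and rests on the same underlying diagonalization as the paper's proof: a finite-extension construction that, at a stage handling a given functional, asks whether some extension of the current commitment to $B_1$ forces $\Phi^{B_1 \oplus (\text{oracle})}$ to converge on a fresh argument $n$, and if so fixes $B_0(n)$ to the opposite value, otherwise concludes the domain will be finite and does nothing. You also correctly note the reduction to diagonalizing against the single oracle $B_1 \oplus \emptyset^{(\alpha)}$ itself, and that the two families of requirements cannot conflict since each step only extends. The one thing the paper does differently — and it is a genuine simplification — is to fold the entire quantification over $\alpha < \omega_1^{CK}$ into a single oracle: it diagonalizes against $\Phi_e^{B_1 \oplus \mathcal{O}}$ and $\Phi_e^{B_0 \oplus \mathcal{O}}$, where $\mathcal{O}$ is Kleene's $\mathcal{O}$, using the fact that $\emptyset^{(\alpha)} <_T \mathcal{O}$ for every computable $\alpha$; by the monotonicity of $\from_\ii$ in its oracle argument (the remark following Definition~\ref{D:nleqii}), $B_0 \nleftarrow_\ii B_1 \oplus \mathcal{O}$ already yields $B_0 \nleftarrow_\ii B_1 \oplus \emptyset^{(\alpha)}$ for all such $\alpha$. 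This replaces your list of requirements indexed by pairs $(\Phi, \alpha)$ — which forces you to fix, outside the construction, an enumeration of the computable ordinals and to change the oracle from stage to stage — by a list indexed only by $e$, with one fixed oracle $\mathcal{O}$ throughout, so the whole thing collapses to the most ordinary kind of finite-extension diagonalization. Your version is not wrong, just somewhat heavier than necessary; if you keep it, be explicit that you are using a fixed bijection between $\omega$ and (notations for) the ordinals below $\omega_1^{CK}$ to sequence the requirements, since that bookkeeping is the price you pay for not invoking $\mathcal{O}$.
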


\begin{proof}

We build~$B_0$ and~$B_1$ with~$B_0 \nleftarrow_\ii B_1 \oplus \mathcal{O}$ and~$B_1 \nleftarrow_\ii B_0 \oplus \mathcal{O}$. This will satisfy the lemma because, for every~$\alpha<\omega_1^{CK}$,~$\emptyset^{(\alpha)}<_T\mathcal{O}$. We use finite approximation to build~$B_0$ and~$B_1$.

Let~$\sigma_0 = \tau_0 = \emptyset$, and assume that for some~$s \geq 0$,~$\sigma_s$ and~$\tau_s$ are defined. If~$s = 2e$, we ask if there is any~$n \geq |\sigma_s|$ and~$\tau \succeq \tau_s$ such that
\[
\Phi_e^{\tau \oplus (\mathcal{O} \res |\tau|)}(n) \downarrow = b
\]
for some~$b \in \{0,1\}$, and if so we let~$\tau_{s+1}$ be the least such~$\tau$, and let~$\sigma_{s+1}$ be the least~$\sigma \succeq \sigma_s$ with~$n < |\sigma|$ and~$\sigma(n) \neq b$. This ensures that either~$\Phi_e^{B_1 \oplus \mathcal{O}}$ has finite domain or it disagrees with~$B_0$. If~$s = 2e+1$, we analogously diagonalize to ensure that either~$\Phi_e^{B_0 \oplus \mathcal{O}}$ has finite domain or it disagrees with~$B_1$.
\end{proof}

\begin{lemma}\label{L:maxpair2}
For all sets~$A$ and~$B$, if~$B \leq_\ii A$ then either there exists an~$\alpha$ such that~$B \from_\ii \emptyset^{(\alpha)}$ or there exists an~$\alpha$ such that~$A \from_\ii B \oplus \emptyset^{(\alpha)}$.
\end{lemma}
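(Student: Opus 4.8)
The plan is to start from a Turing functional $\Phi$ witnessing $B \leq_\ii A$ and analyze the tree of attempts to feed $\Phi$ partial oracles for $A$ with infinite domain. Fix $\Phi$ with the property that for every partial oracle $(A)$ for $A$ with infinite domain, $\Phi^{(A)} \simeq B$ with infinite domain. The key dichotomy will be phrased in terms of whether a certain search process, carried out relative to iterated jumps of $\emptyset$, succeeds in producing infinitely many correct bits of $B$ using only ``small'' information about $A$, or whether instead the failure of this process lets us read off $A$ (modulo a jump) from $B$.

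First I would set up the combinatorial object: consider finite partial oracles $p$ (finite sets of triples $\langle n,x,l\rangle$ consistent with being an initial segment of some partial oracle for $A$), and for each such $p$ ask which bits of $B$ are ``forced'': say $p$ forces $B(m) = v$ if $\Phi^{q}(m)\downarrow = v$ for every finite partial oracle $q \supseteq p$ that is consistent with $A$ on the bits $q$ actually queries — or more carefully, if $\Phi^p(m)\downarrow$ already. The correctness hypothesis on $\Phi$ (that \emph{every} infinite-domain partial oracle for $A$ yields infinitely much of $B$) should give, by a compactness/König's-lemma argument inside an appropriate jump, that there is an ``honest'' way to keep extending $p$ forever so that infinitely many bits of $B$ get decided correctly; the question is how much oracle power is needed to run this extension strategy. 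I would attempt to show: either (Case 1) there is a countable ordinal $\alpha$ and a functional that, using $\emptyset^{(\alpha)}$ alone, builds such a sequence of finite partial oracles whose decided bits form a partial oracle for $B$ with infinite domain — giving $B \from_\ii \emptyset^{(\alpha)}$ since any $C \geq_T \emptyset^{(\alpha)}$ can run the same thing — or (Case 2) this search provably fails. In Case 2, the failure means that for cofinitely many positions the attempts to force bits of $B$ get stuck, i.e., the value of $\Phi$ on the relevant inputs genuinely depends on as-yet-unqueried bits of $A$; I would then argue that this dependence is ``injective enough'' that from $B$ together with a sufficiently large jump $\emptyset^{(\alpha)}$ (enough to decide which finite partial oracles are consistent with $A$, which is an arithmetic-in-$A$, hence eventually a $\emptyset^{(\alpha)}$-in-the-limit-along-a-path, condition — here is where the ordinal gets chosen) one can recover $A$ with infinite domain, yielding $A \from_\ii B \oplus \emptyset^{(\alpha)}$.

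The main obstacle, and the place I expect the real work to be, is making the dichotomy genuinely effective at a \emph{fixed} countable ordinal level rather than merely hyperarithmetic: the ``search for an honest extension strategy'' is naturally a $\Pi^1_1$-flavored object (it resembles the wellfoundedness analysis behind $\mathbf{\Pi}^1_1$-completeness of $\leq_\ii$ alluded to earlier), so one must show that along any particular branch only boundedly many jumps are needed, or else push the whole analysis relative to Kleene's $\mathcal{O}$ and then appeal to the fact used in Lemma~\ref{L:maxpair1} that $\emptyset^{(\alpha)} <_T \mathcal{O}$ for every $\alpha < \omega_1^{CK}$ — so it would suffice to get the dichotomy with ``$\mathcal{O}$'' in place of ``$\emptyset^{(\alpha)}$'' and then observe that the construction actually only accesses a bounded initial segment, or absorb the bound into the statement. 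Concretely I would: (i) define the forcing relation and the extension tree; (ii) prove the compactness lemma that some infinite honest branch exists; (iii) split on whether such a branch can be followed $\emptyset^{(\alpha)}$-computably for some $\alpha$; (iv) in the positive case conclude $B \from_\ii \emptyset^{(\alpha)}$ directly from Definition~\ref{D:nleqii}; (v) in the negative case extract from the non-followability a $B \oplus \emptyset^{(\alpha)}$-computable procedure that, reading the ``deductions'' $\Phi$ makes about $B$ backwards, pins down infinitely many bits of $A$, giving $A \from_\ii B \oplus \emptyset^{(\alpha)}$. Steps (ii) and (v) are the delicate ones; (v) in particular is where the structure of partial oracles (the role of the convergence-time parameter $l$) must be used carefully so that ``infinite domain'' is preserved.
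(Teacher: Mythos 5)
Your high-level dichotomy is the right one — either iterated jumps of $\emptyset$ suffice to generate a partial oracle for $B$, or the failure of that reveals bits of $A$ from $B$ plus a jump — and you correctly identify both the role of Spector boundedness in turning a $\Pi^1_1$-looking search into a fixed $\alpha < \omega_1^{CK}$ and the fact that the hard steps are (ii) and (v). But the central combinatorial object is not the one you propose, and the gap is not cosmetic.

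Your ``forcing'' notion is either $\Phi^p(m)\downarrow$ outright, or ``every consistent-with-$A$ extension $q \supseteq p$ has $\Phi^q(m)\downarrow = v$.'' Neither version has the closure property the argument actually needs. The paper instead defines a transfinite \emph{deduction} relation by recursion on an ordinal: $\sigma$ $\alpha$-deduces $\Phi(n)=i$ if $\Phi^\sigma(n)\downarrow=i$, or if \emph{infinitely many} one-point extensions $\tau \succ_1 \sigma$ each $\beta$-deduce $\Phi(n)=i$ for some $\beta < \alpha$. This ``infinitely many $1$-extensions'' clause is the load-bearing idea: (a) it makes the set of deducing oracles the least fixed point of an arithmetic monotone operator, so Spector boundedness gives a bound $\alpha < \omega_1^{CK}$ and one can compute $\alpha$-deductions from $\emptyset^{(2\alpha+2)}$; (b) it is purely syntactic in $\Phi$ and $\sigma$ (no reference to $A$), so it is something a jump of $\emptyset$ can search for, which your ``consistent with $A$'' clause destroys; and (c) it yields exactly the pigeonhole closure used in the final case of the proof — if some fixed $n$ is decided with the same value by infinitely many $\tau \succ_1 \sigma$, then by definition $\sigma$ already deduces $\Phi(n)$, contradiction. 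Your proposal has no analogue of (c), and without it the dichotomy doesn't close: after isolating a $\sigma$ past which no $1$-extension changes what is decided, you have no mechanism to rule out the residual case.

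Two further points you gesture at but don't supply. First, the step where ``failure lets us read off $A$'' needs the observation that a $1$-extension $\tau$ of a correct $\sigma$ which deduces a \emph{wrong} bit of $B$ must have its single extra bit wrong about $A$ — so $B \oplus \emptyset^{(2\alpha+2)}$ can enumerate infinitely many bits of $\overline{A}$, hence of $A$. That is what makes $A \from_\ii B \oplus \emptyset^{(\alpha)}$ actually go through; ``this dependence is injective enough'' is not a substitute. Second, a correctness lemma is needed before any of this: assuming $A \not\from_\ii \emptyset^{(\alpha)}$ for all $\alpha$, anything a partial oracle \emph{for} $A$ deduces is true of $B$ (proved by induction on $\alpha$, again using the reading-off-wrong-bits trick). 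Without this, the bits your $\emptyset^{(\alpha)}$-search produces in Case 1 need not form a partial oracle for $B$. So: right destination, but the engine — the transfinite deduction relation and its boundedness, computability, and correctness lemmas — is missing, and the object you put in its place would not drive the argument.
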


We now pause to indicate how Theorem \ref{T:maxpair} follows.

\begin{proof}[Proof of Theorem \ref{T:maxpair}]
Fix~$B_0$ and~$B_1$ as in the Lemma \ref{L:maxpair1}, and let~$A$ be any set with~$B_0 \leq_\ii A$. By Lemma \ref{L:maxpair2}, fic some~$\alpha$ such that either~$B_0 \from_\ii \emptyset^{(\alpha)}$ or~$A \from_\ii B_0 \oplus \emptyset^{(\alpha)}$. The former case cannot hold, since if it did we would have~$B_0 \from_\ii \emptyset^{(\alpha)} \oplus B_1$ by the remark after Definition \ref{D:nleqii}, a contradiction. But by the same remark, if we had~$B_1 \leq_\ii A \from_\ii B_0 \oplus \emptyset^{(\alpha)}$ we would have~$B_1 \from_\ii B_0 \oplus \emptyset^{(\alpha)}$, again a contradiction. Thus,~$B_1 \nleq_\ii A$, and the theorem is proved.
\end{proof}

The rest of the section is now dedicated to the proof of Lemma \ref{L:maxpair2}, relying in turn, on the three technical lemmas below. Call a partial oracle \emph{finite} if its domain is finite. Given finite partial oracles~$\sigma$ and~$\tau$, we say~$\tau$ is a \emph{$1$-extension} of~$\sigma$, and write~$\tau \succ_1 \sigma$, if~$\tau$ extends~$\sigma$ and is defined on precisely one more element than~$\sigma$ is.

\begin{definition}
Let~$\sigma$ be a partial oracle,~$n \in \omega$, and~$i \in \{0,1\}$.
\begin{enumerate}
\item For~$\alpha$ an ordinal,~$\sigma$ a finite partial oracle,~$n$ an integer, and~$i \in \{0,1\}$, we say \emph{$\sigma$~$\alpha$-deduces that~$\Phi(n)=i$} if~$\Phi^\sigma(n) \downarrow =i$, or if there exist infinitely many~$\tau\succ_1\sigma$, each of which~$\beta$-deduces that~$\Phi(n)=i$ for some~$\beta < \alpha$.
\item We say \emph{$\sigma$ deduces that~$\Phi(n)=i$} if there is an ordinal~$\alpha$ such that~$\sigma$~$\alpha$-deduces that~$\Phi(n)=i$.
\item We say that \emph{$\sigma$~deduces (or~$\alpha$-deduces) the value of~$\Phi(n)$} if there is some~$i$ such that~$\sigma$~deduces (or~$\alpha$-deduces) that~$\Phi(n)=i$.
\end{enumerate}
\end{definition}

\begin{lemma}\label{L:deducebounded}
If~$\sigma$ deduces that~$\Phi(n)=i$ then there exists some~$\alpha<\omega_1^{CK}$ such that~$\sigma$~$\alpha$-deduces that~$\Phi(n)=i$.

\end{lemma}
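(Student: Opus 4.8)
The plan is to show that the $\alpha$ witnessing a deduction is bounded by $\omega_1^{CK}$ via a standard well-foundedness argument, using the fact that the relation "$\tau \succ_1 \sigma$" together with the base clause "$\Phi^\sigma(n)\downarrow = i$" gives rise to a well-founded tree of countable, indeed hyperarithmetic, character. First I would observe that for a fixed $n$ and $i$, the relation "$\sigma$ deduces that $\Phi(n)=i$" is built up inductively, and the least $\alpha$ such that $\sigma$ $\alpha$-deduces that $\Phi(n)=i$ — call it the \emph{rank} of $\sigma$ (relative to $n,i$) — is defined by a monotone inductive operator on partial oracles. Explicitly, $\sigma$ has rank $0$ iff $\Phi^\sigma(n)\downarrow = i$, and $\sigma$ has rank $\leq \alpha$ iff either $\sigma$ has rank $0$ or there are infinitely many $\tau \succ_1 \sigma$ of rank $< \alpha$. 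The key point is that this operator is \emph{arithmetic} (in fact $\Pi^0_2$): the clause "$\Phi^\sigma(n)\downarrow = i$" is $\Sigma^0_1$, and "there are infinitely many $\tau\succ_1\sigma$ with [a given property]" is a $\Pi^0_2$ quantification over the (computable) set of $1$-extensions of $\sigma$.

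The main step is then to invoke the boundedness phenomenon for inductive definitions: the closure ordinal of an arithmetic (even $\Pi^1_1$) monotone inductive operator on a computable domain is at most $\omega_1^{CK}$. Concretely, I would argue that the set of pairs $(\sigma,\gamma)$ with $\gamma$ an ordinal notation such that $\sigma$ $|\gamma|$-deduces that $\Phi(n)=i$ is $\Pi^1_1$ (it is defined by transfinite recursion along $\mathcal{O}$ using the arithmetic clauses above), so if $\sigma$ deduces that $\Phi(n)=i$ at all, the hypothesis is witnessed by a member of a nonempty $\Sigma^1_1$ set of ordinal notations — namely $\{\gamma \in \mathcal{O} : \sigma \text{ } |\gamma|\text{-deduces that }\Phi(n)=i\}$ is, upon unwinding the recursion, $\Sigma^1_1$ because "there exists a recursion witnessing the deduction" is a $\Sigma^1_1$ statement — and by the Spector–Gandy / boundedness theorem a nonempty $\Sigma^1_1$ subset of $\mathcal{O}$ is bounded below $\omega_1^{CK}$. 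Hence some $\alpha < \omega_1^{CK}$ works.

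Alternatively, and perhaps more cleanly, I would avoid the effective-descriptive-set-theory machinery and argue directly via well-founded trees. Suppose $\sigma$ deduces that $\Phi(n)=i$ but no $\alpha < \omega_1^{CK}$ witnesses this. Build the tree $T$ whose nodes are finite sequences $\sigma = \tau_0 \prec_1 \tau_1 \prec_1 \tau_2 \prec_1 \cdots$ of successive $1$-extensions along which no $\tau_k$ has $\Phi^{\tau_k}(n)\downarrow = i$; the deduction relation is precisely the assertion that the "dual" well-founded part of this branching structure reaches $\sigma$, and the rank of $\sigma$ is the rank of $\sigma$ in the associated well-founded tree. Since each node has only countably many ($\leq \aleph_0$) immediate successors and the whole tree is a computable tree on $\omega$, its well-founded part has rank $< \omega_1^{CK}$ by the usual bound on ranks of computable well-founded trees. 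This contradicts the assumption, completing the proof.

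The step I expect to be the main obstacle is pinning down exactly which complexity class the deduction relation lands in and matching it to the right boundedness theorem: the clause "infinitely many $\tau \succ_1 \sigma$ $\beta$-deduce ..." introduces a $\Pi^0_2$ (rather than merely $\Sigma^0_1$) inductive step, so one must check that transfinite iteration of a $\Pi^0_2$ operator still closes off below $\omega_1^{CK}$ — which it does, since any $\Pi^0_2$, or even $\Pi^1_1$, monotone operator on a computable domain has closure ordinal $\leq \omega_1^{CK}$ — and one must be careful that the "deduces" relation, which existentially quantifies over $\alpha$, remains $\Sigma^1_1$ so that boundedness applies. Everything else is bookkeeping.
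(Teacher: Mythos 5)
Your primary argument --- viewing the rank function as the closure ordinal of a monotone $\Pi^0_2$ inductive operator on finite partial oracles and invoking boundedness to get below $\omega_1^{CK}$ --- is exactly the paper's argument. The paper defines the operator $\Gamma$ explicitly, observes it is $\Pi^0_2$ and monotone, checks that the $\alpha$th iterate $\Gamma_\alpha$ is precisely the set of $\sigma$ that $\alpha$-deduce, and then cites Spector's boundedness theorem. So that half of your proposal is correct and takes essentially the same route, modulo bookkeeping about whether one phrases the last step in terms of closure ordinals of arithmetic operators or $\Sigma^1_1$ subsets of $\mathcal{O}$.

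Your ``cleaner'' alternative via well-founded trees has a genuine gap. The tree $T$ of chains $\sigma = \tau_0 \prec_1 \tau_1 \prec_1 \cdots$ avoiding the base clause $\Phi^{\tau_k}(n)\downarrow = i$ need not be well-founded even when $\sigma$ deduces, because the inductive clause requires only that \emph{infinitely many} $1$-extensions deduce at a lower rank, not that \emph{all} of them do. For instance, $\sigma$ may have infinitely many $1$-extensions $\tau$ with $\Phi^\tau(n)\downarrow = i$ (so $\sigma$ $1$-deduces) while also having a single $1$-extension $\tau^*$ on which $\Phi$ never halts along any further extension; then $T$ has an infinite branch through $\tau^*$. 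Thus the deduction rank of $\sigma$ is not the rank of $\sigma$ in the well-founded part of $T$, and ``the usual bound on ranks of computable well-founded trees'' does not apply to $T$ as you have defined it. To make a tree argument work one would have to code the ``all but finitely many children'' dual into the branching (e.g., via a game tree where one player excises finite sets of children), at which point one is back to a $\Sigma^1_1$ inductive-definition boundedness argument --- i.e., the first half of your proposal. So the alternative should be dropped or substantially reworked; the inductive-operator argument is the one that is correct.
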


\begin{proof}
We first show that the set of finite partial oracles~$\sigma$ such that~$\sigma$ deduces that~$\Phi(n)=i$ can be defined by an arithmetically-definable monotonic closure operator, and then appeal to the fact that all such closure operators reach their limit at a stage before~$\omega_1^{CK}$.

Define an operator~$\Gamma$ that takes a set,~$X$, of finite partial oracles to the set of finite partial oracles~$\sigma$ such that one of the following holds:
\begin{itemize}
\item~$\sigma \in X$;
\item~$X$ contains infinitely many~$1$-extensions of~$\sigma$;
\item~$\Phi^\sigma(n) \downarrow = i$.
\end{itemize}
Then,~$\Gamma(X)$ is arithmetic (in fact,~$\Pi^0_2$) in~$X$, and obviously monotonic.

For all ordinals~$\alpha$, let~$\Gamma_\alpha$ denote the~$\alpha$th iteration of~$\Gamma$ on the empty set. (Formally,~$\Gamma_0 = \Gamma(\emptyset)$;~$\Gamma_{\alpha + 1} = \Gamma(\Gamma_\alpha)$; and for~$\alpha$ a limit,~$\Gamma_\alpha = \bigcup_{\beta < \alpha} \Gamma_\beta$.) Then it is easily checked that~$\Gamma_\alpha$ is precisely the set of partial oracles that~$\alpha$-deduce that~$\Phi(n) = i$. Now by Spector's boundedness theorem (cf. \cite[Corollary~5.6]{Sacks-1990}), there is an~$\alpha<\omega_1^{CK}$ such that~$\Gamma_\alpha=\Gamma_{\alpha+1}$. Then a partial oracle~$\sigma$ deduces that~$\Phi(n)=i$ only if~$\sigma$~$\alpha$-deduces that~$\Phi(n)=i$, as desired.
\end{proof}

\begin{lemma}\label{L:deducebounds}
If~$\alpha<\omega_1^{CK}$, then~$\emptyset^{(2\alpha+2)}$ computes the set of~$\langle n,i\rangle$ such that~$\sigma$~$\alpha$-deduces that~$\Phi(n)=i$.
\end{lemma}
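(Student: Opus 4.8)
The plan is to induct on $\alpha < \omega_1^{CK}$, using the characterization established in the proof of Lemma \ref{L:deducebounded} that $\Gamma_\alpha$ (the set of $\sigma$ that $\alpha$-deduce some $\Phi(n) = i$, thought of as a set of triples $\langle \sigma, n, i\rangle$) is built by iterating the $\Pi^0_2$ operator $\Gamma$. The key bookkeeping observation is that each application of $\Gamma$ costs one "$\Pi^0_2$ jump", i.e.\ two Turing jumps, so that after $\alpha$ iterations we have spent roughly $2\alpha$ jumps. I would phrase the induction hypothesis as: the set $D_\alpha = \{\langle \sigma, n, i\rangle : \sigma \text{ $\alpha$-deduces } \Phi(n) = i\}$ is computable from $\emptyset^{(2\alpha+2)}$, and then extract the statement of the lemma (which fixes attention on those $\langle n, i\rangle$ deduced by $\sigma$, though in fact the lemma as stated is uniform in $\sigma$ via the empty oracle or any fixed $\sigma$; I will carry the parameter $\sigma$ along so the induction goes through cleanly).

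First I would do the base case $\alpha = 0$: $\sigma$ $0$-deduces $\Phi(n) = i$ iff $\Phi^\sigma(n)\downarrow = i$, which is a $\Sigma^0_1$ condition, hence $\emptyset' \leq_T \emptyset^{(2)}$ decides it; this matches $2\cdot 0 + 2 = 2$. For the successor step, suppose $D_\alpha \leq_T \emptyset^{(2\alpha+2)}$. By the recursion defining $\alpha$-deduction, $\sigma$ $(\alpha+1)$-deduces $\Phi(n) = i$ iff either $\Phi^\sigma(n)\downarrow = i$, or there are infinitely many $\tau \succ_1 \sigma$ such that $\tau$ $\beta$-deduces $\Phi(n) = i$ for some $\beta \leq \alpha$ — and since $\beta$-deduction implies $\alpha$-deduction for $\beta \leq \alpha$ (monotonicity of $\Gamma$), this simplifies to: infinitely many $\tau \succ_1 \sigma$ with $\langle \tau, n, i\rangle \in D_\alpha$. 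The $1$-extensions $\tau \succ_1 \sigma$ are uniformly enumerable, so "infinitely many $\tau \succ_1 \sigma$ lie in $D_\alpha$" is a $\Pi^0_2$ condition relative to $D_\alpha \leq_T \emptyset^{(2\alpha+2)}$, hence computable from $(\emptyset^{(2\alpha+2)})'' = \emptyset^{(2\alpha+4)} = \emptyset^{(2(\alpha+1)+2)}$. Taking the disjunction with the $\Sigma^0_1$ clause stays within this bound, so $D_{\alpha+1} \leq_T \emptyset^{(2(\alpha+1)+2)}$.

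For the limit step, suppose $\lambda < \omega_1^{CK}$ is a limit and $D_\beta \leq_T \emptyset^{(2\beta+2)}$ for all $\beta < \lambda$. Since $\Gamma_\lambda = \bigcup_{\beta < \lambda} \Gamma_\beta$, we have $\sigma$ $\lambda$-deduces $\Phi(n) = i$ iff $\langle \sigma, n, i\rangle \in D_\beta$ for some $\beta < \lambda$. Here the main obstacle — really the only subtle point — is that a limit ordinal below $\omega_1^{CK}$ has no canonical finite description, so I must be careful about what "uniformly in $\alpha$" means and which jump suffices. Fix a notation $a \in \mathcal{O}$ for $\lambda$ with $a = 3\cdot 5^e$ and $\varphi_e$ enumerating notations for a cofinal $\omega$-sequence $\beta_0 < \beta_1 < \cdots$ below $\lambda$; relative to $\mathcal{O}$ (or to any $\emptyset^{(\gamma)}$ with $\gamma$ a limit $> \lambda$) one can uniformly assemble $\bigcup_n D_{\beta_n}$, and since each $2\beta_n + 2 < 2\lambda$, this union is computable from $\emptyset^{(2\lambda)}$, hence from $\emptyset^{(2\lambda+2)}$. (The $+2$ slack is exactly what absorbs the uniformity loss at limits; this is why the lemma is stated with $2\alpha+2$ rather than, say, $2\alpha$.) I would write this step invoking the uniformity of the iteration of a $\Pi^0_2$ operator along a recursive ordinal, citing the same boundedness machinery (Spector, \cite[Corollary~5.6]{Sacks-1990}) used in Lemma \ref{L:deducebounded}, and noting that the standard effective transfinite recursion gives the bound $\emptyset^{(2\alpha)} \leq_T D_\alpha$-computation uniformly in a notation for $\alpha$, so that passing to $\emptyset^{(2\alpha+2)}$ is safely sufficient at every stage. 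Finally, to conclude the lemma as stated: the set of $\langle n, i\rangle$ such that $\sigma$ $\alpha$-deduces $\Phi(n) = i$ is just the section of $D_\alpha$ at $\sigma$, which is computable from $D_\alpha \leq_T \emptyset^{(2\alpha+2)}$.
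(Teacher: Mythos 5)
Your base case and successor case are correct and match the paper's: at a successor, $\sigma$ $(\alpha+1)$-deduces $\Phi(n)=i$ iff $\Phi^\sigma(n)\downarrow=i$ or infinitely many $\tau\succ_1\sigma$ $\alpha$-deduce, which is $\Pi^0_2$ over $D_\alpha$, costing two more jumps. The problem is your limit step. You assert that for a limit $\lambda$, ``$\sigma$ $\lambda$-deduces $\Phi(n)=i$ iff $\langle\sigma,n,i\rangle\in D_\beta$ for some $\beta<\lambda$,'' i.e.\ $D_\lambda=\bigcup_{\beta<\lambda}D_\beta$. That is not what Definition~5.4 says. Per the definition, $\sigma$ $\lambda$-deduces iff $\Phi^\sigma(n)\downarrow=i$ \emph{or there are infinitely many $\tau\succ_1\sigma$ each of which $\beta_\tau$-deduces for some $\beta_\tau<\lambda$, with $\beta_\tau$ allowed to vary with $\tau$}. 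If the $\beta_\tau$ are cofinal in $\lambda$, then $\sigma$ can $\lambda$-deduce without $\beta$-deducing for any fixed $\beta<\lambda$, so the union characterization fails. In other words, the limit stage is not merely a union: it applies the $\Pi^0_2$ operator to the union one more time. This discrepancy is likely inherited from the paper's own aside in the proof of Lemma~\ref{L:deducebounded}, which says ``$\Gamma_\alpha$ is precisely the set of $\alpha$-deducers''; that identification is in fact off by one at limits (for $\Gamma_\lambda=\bigcup_{\beta<\lambda}\Gamma_\beta$, but Definition~5.4's $\lambda$-deduction corresponds to $\Gamma_{\lambda+1}$). It is harmless for Lemma~\ref{L:deducebounded} (which only needs a bounded closure ordinal) but it would give the wrong characterization here, since Lemma~\ref{L:deducebounds} is stated and used for Definition~5.4.

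The paper's limit case therefore does something genuinely different from yours: fixing a notation $u=3\cdot5^e$ for $\lambda$, it asks directly whether for infinitely many $k\notin\operatorname{dom}\sigma$ there is some $v$ enumerated by $\Phi_e$ and some $\tau\succ_1\sigma$ with $k\in\operatorname{dom}\tau$ that $|v|$-deduces. By the induction hypothesis $H_u$ decides $|v|$-deduction uniformly for all $v$ enumerated by $\Phi_e$ (since $2|v|+2<\lambda=|u|$), so the whole question is $\Pi^0_2$ in $H_u$ and hence decidable in $(H_u)''$, which sits at level $\lambda+2\le 2\lambda+2$. Note that the $+2$ slack in the bound is consumed at the limit by precisely this final $\Pi^0_2$ jump over the cofinal enumeration, not, as you suggest, by ``uniformity loss''; if your union characterization were correct you would in fact land at $\emptyset^{(2\lambda+1)}$ or better, which is a signal that something has been dropped. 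To repair your proof, replace the union step with the condition ``infinitely many $\tau\succ_1\sigma$ lie in $\bigcup_{v}D_{|v|}$'' (with $v$ ranging over the cofinal sequence coded by the notation) and pay the extra two jumps exactly as in the successor case.
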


\begin{proof}

We prove, by induction on~$\alpha$, the following statement. If~$\alpha<\omega_1^{CK}$ and~$a$ is any ordinal notation for~$2\alpha+2$, then whether or not~$\sigma$~$\alpha$-deduces that~$\Phi(n)=i$ is uniformly computable in~$\sigma$,~$\Phi$,~$n$,~$i$,~$a$, and~$H_a$. The uniformity and the ordinal notations will be needed for the limit stage of the proof.

If~$\alpha=0$, then to determine whether or not~$\sigma$~$\alpha$-deduces that~$\Phi(n)=i$, we simply ask whether~$\Phi^\sigma(n)$ halts and is equal to~$i$. This is computable in~$\emptyset'$, and so also in~$\emptyset''$.

Next, suppose~$\alpha = \beta + 1$, and assume the result holds for~$\beta$. To determine whether or not~$\sigma$~$\alpha$-deduces that~$\Phi(n)=i$, we ask whether for infinitely many~$k$ not in the domain of~$\sigma$ there is a~$\tau \succ_1 \sigma$ with~$k$ in its domain that~$\beta$-deduces that~$\Phi(n)=i$. By induction, whether or not~$\tau$~$\beta$-deduces that~$\Phi(n)=i$ is uniformly computable in~$H_b$, where~$b$ is a notation for~$2\beta+2$. Therefore, the question of whether~$\sigma$~$\alpha$-deduces that~$\Phi(n)=i$ is uniformly computable in~$(H_b)''$, and hence in~$H_a$ for~$a$ any notation for~$|b|+2 = 2\beta + 4 = 2\alpha + 2$.

If~$\alpha$ is a limit, let~$u=3\cdot5^e$ with~$|u|=\alpha$. To determine whether or not~$\sigma$~$\alpha$-deduces that~$\Phi(n)=i$, we ask whether for infinitely many~$k$ not in the domain of~$\sigma$ there is a~$v$ enumerated by~$\Phi_e$ and a~$\tau\succ_1\sigma$ with~$k$ in its domain that~$|v|$-deduces that~$\Phi(n)=i$. For each~$v$ enumerated by~$\Phi_e$, we have~$|v|<\alpha$, and so~$2|v|+2<\alpha$ since~$\alpha$ is a limit. Therefore,~$H_u$ can uniformly compute whether or not~$\tau$~$|v|$-deduces that~$\Phi(n)=i$., so~$(H_u)''$ can compute whether or not the number of such~$\tau$ goes to infinity over all~$v$ enumerated by~$\Phi_e$.

There are infinitely many~$\tau\succ_1\sigma$ that~$\beta$-deduce that~$\Phi(n)=i$ for some~$\beta<\alpha$, if and only if there are infinitely many~$\tau\succ_1\sigma$ that~$|v|$-deduce that~$\Phi(n)=i$ for some~$v$ enumerated by~$\Phi_e$. Thus~$(H_u)''$ can uniformly compute whether or not~$\sigma$~$\alpha$-deduces that~$\Phi(n)=i$.
\end{proof}

\begin{lemma}\label{L:deducecorrect}
Suppose~$B \leq_\ii A$ via~$\Phi$, and that~$A \not\from_\ii \emptyset^{(\alpha)}$ for every computable ordinal~$\alpha$. If a partial oracle~$\sigma$ for~$A$ deduces that~$\Phi(n)=i$, then~$B(n)=i$.
\end{lemma}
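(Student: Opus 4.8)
The plan is a proof by contradiction. Suppose some finite partial oracle $\sigma$ for $A$ deduces $\Phi(n)=i$ while $B(n)\neq i$; call such a pair $(\sigma,n)$ \emph{bad} (so necessarily $i=1-B(n)$). Among all bad pairs, choose $(\sigma_0,n_0)$ for which the least ordinal $\alpha_0$ with ``$\sigma_0$ $\alpha_0$-deduces $\Phi(n_0)=i_0$'' is as small as possible, where $i_0=1-B(n_0)$; by Lemma~\ref{L:deducebounded} this $\alpha_0$ exists and is a computable ordinal. The goal is to extract from the triple $(\sigma_0,n_0,\alpha_0)$ a single infinite-domain partial function agreeing with $A$ that is computable from $\emptyset^{(\gamma_0)}$ for a computable ordinal $\gamma_0$. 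Since every $C\geq_T\emptyset^{(\gamma_0)}$ then computes such a function, this yields $A\from_\ii\emptyset^{(\gamma_0)}$, contradicting the hypothesis.

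First I would dispose of the case $\alpha_0=0$, i.e.\ $\Phi^{\sigma_0}(n_0)\downarrow=i_0$: extending $\sigma_0$ to the total (hence infinite-domain) partial oracle for $A$ given by $A$ itself, and using that a convergent partial-oracle computation stays convergent with the same value under extension of the oracle, we get $\Phi^A(n_0)\downarrow=i_0$; since $B\leq_\ii A$ via $\Phi$ this forces $\Phi^A\simeq B$, so $B(n_0)=i_0$, a contradiction. Hence $\alpha_0>0$ and $\Phi^{\sigma_0}(n_0)\neq i_0$, so by the definition of $\alpha_0$-deduction there are infinitely many $1$-extensions $\tau\succ_1\sigma_0$, each of which deduces $\Phi(n_0)=i_0$ with ordinal $<\alpha_0$. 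The key point is that no such $\tau$ — more generally, no $1$-extension of $\sigma_0$ that deduces $\Phi(n_0)=i_0$ with ordinal $<\alpha_0$ — can be consistent with $A$: if it were, it would itself be a bad pair with a strictly smaller ordinal, contradicting the minimality of $\alpha_0$. Since a $1$-extension of $\sigma_0$ that is inconsistent with $A$ has the form $\sigma_0\cup\{\langle m,1-A(m)\rangle\}$ for a unique $m\notin\dom(\sigma_0)$, and distinct inconsistent $1$-extensions add distinct elements, we conclude that $\sigma_0\cup\{\langle m,1-A(m)\rangle\}$ deduces $\Phi(n_0)=i_0$ with ordinal $<\alpha_0$ for infinitely many $m$.

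Now set
\[
R=\{\langle m,v\rangle : m\notin\dom(\sigma_0),\ \sigma_0\cup\{\langle m,v\rangle\}\text{ deduces }\Phi(n_0)=i_0\text{ with ordinal }<\alpha_0\}.
\]
By the previous paragraph every $\langle m,v\rangle\in R$ has $v=1-A(m)$; in particular $R$ is the graph of a partial function, and its domain is infinite. Moreover $\beta$-deduction is monotone in $\beta$, and by Lemma~\ref{L:deducebounds} ``$\sigma'$ $\beta$-deduces $\Phi(n')=i'$'' is uniformly computable from $\emptyset^{(2\beta+2)}$; hence membership in $R$ is decidable from $\emptyset^{(\gamma_0)}$ for an appropriate computable ordinal $\gamma_0$ (one may take $\gamma_0=2\alpha_0+1$, using in the limit case that $\emptyset^{(2\alpha_0)}$ uniformly computes $\emptyset^{(2\beta+2)}$ for all $\beta<\alpha_0$). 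Then $m\mapsto 1-R(m)$ is an infinite-domain partial function agreeing with $A$ and computable from $\emptyset^{(\gamma_0)}$, so $A\from_\ii\emptyset^{(\gamma_0)}$, contradicting the hypothesis.

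I expect the main obstacle to be the middle step: having fixed $\alpha_0$ minimal, showing that \emph{every} $1$-extension of $\sigma_0$ that witnesses the $\alpha_0$-deduction (indeed every $1$-extension deducing $\Phi(n_0)=i_0$ with ordinal $<\alpha_0$) is inconsistent with $A$. This is precisely what converts the ``deduction'' hypothesis into genuine information about $A$: each such witness $\sigma_0\cup\{\langle m,v\rangle\}$ not merely avoids lying about $A(m)$ but actively records $1-A(m)$, and there are infinitely many of them. A secondary, purely bookkeeping, difficulty is pinning down the computable ordinal $\gamma_0$ when $\alpha_0$ is a limit; the uniformity asserted in Lemma~\ref{L:deducebounds} is what makes this routine.
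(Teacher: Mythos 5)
Your proposal is correct and follows essentially the same strategy as the paper: the paper argues by transfinite induction on the deduction ordinal $\alpha$, while you argue by taking a minimal counterexample over that ordinal, but these are the same argument repackaged. In both versions the dichotomy at a successor/positive ordinal is identical: if some $1$-extension deducing at a smaller ordinal is itself a correct partial oracle for $A$, the induction hypothesis (your minimality) closes the case; otherwise every such $1$-extension must lie about $A$ on its extra bit, so $\emptyset^{(2\alpha+2)}$ (via Lemma~\ref{L:deducebounds}) recovers infinitely many bits of $A$, contradicting $A\not\from_\ii\emptyset^{(\alpha)}$. Your write-up is somewhat more explicit in two places the paper elides: the base case (spelling out the extension of $\sigma_0$ to an infinite-domain partial oracle of $A$), and the exact computable ordinal needed to decide membership in the set $R$ of bad $1$-extensions. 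Neither of these affects the substance; the paper simply invokes $\emptyset^{(2\alpha+2)}$ directly.
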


\begin{proof}
We prove by induction on~$\alpha$ that if~$\sigma$ is a finite partial oracle for~$A$ that~$\alpha$-deduces that~$\Phi(n)=i$ then~$B(n)=i$. If~$\alpha = 0$, then~$\sigma$~$\alpha$-deducing that~$\Phi(n) = i$ means that~$\Phi^\sigma(n) \downarrow = i$, so~$B(n) = i$ since~$\sigma$ is a partial oracle for~$A$ and~$\Phi$ witnesses an infinite-information reduction from~$A$ to~$B$.

Now suppose~$\alpha>0$, and assume the result for all~$\beta < \alpha$. If~$\sigma$ is a partial oracle for~$A$ that~$\alpha$-deduces that~$\Phi(n) = i$, then there are infinitely many~$\tau \succ_1 \sigma$ that~$\beta$-deduce that~$\Phi(n) = i$ for some~$\beta < \alpha$. By assumption, if any of these~$\tau$ is a partial oracle for~$A$, it follows that~$B(n) = i$. So suppose not. Then since~$\sigma$ was a finite partial oracle for~$A$, we know that any~$\tau$ that~$\beta$-deduces that~$\Phi(n) = i$ for some~$\beta < \alpha$ must disagree with~$A$ on the extra bit it has in its domain that~$\sigma$ does not. But then by Lemma \ref{L:deducebounds}, we can uniformly compute infinitely many such~$\tau$ using~$\emptyset^{(2\alpha+2)}$. Hence~$\emptyset^{(2\alpha+2)}$ can compute infinitely many incorrect bits of~$A$ and thus infinitely many correct bits of~$A$, contradicting the fact that~$A \not\from_\ii \emptyset^{(2\alpha+2)}$.
\end{proof}

The proof of Lemma \ref{L:maxpair2} now follows.

\begin{proof}[Proof of Lemma \ref{L:maxpair2}]
Assume that~$A\geq_\ii B$ via~$\Phi$, and assume that for every~$\alpha<\omega_1^{CK}$, we have that~$B \not\from_\ii \emptyset^{(\alpha)}$, and~$A \not\from_\ii B\oplus\emptyset^{(\alpha)}$. For each finite partial oracle~$\sigma$, let~$f_\sigma$ be the partial function where~$f_\sigma(n)=i$ if and only if~$\sigma$ deduces that~$\Phi(n)=i$. It is straightforward to check (by induction) that~$f_\sigma$ is well-defined.

We claim there exists a finite partial oracle~$\sigma$ for~$A$ such that for every~$\tau \succ_1 \sigma$ that is also a partial oracle for~$A$, the domain of~$f_\tau$ is strictly larger than the domain of~$f_\sigma$. Suppose not, and define a sequence~$\sigma_0 \prec_1 \sigma_1 \prec_1 \cdots$ of finite partial oracles for~$A$, as follows. Let~$\sigma_0$ be the empty oracle, and given~$s$, let~$\sigma_{s+1}$ be any partial oracle for~$A$ such that~$\sigma_{s+1} \succ_1 \sigma_s$ and~$f_{\sigma_{s+1}} = f_{\sigma_s}$. Then~$\bigcup_s \sigma_s$ is a partial oracle~$(A)$ for~$A$ with infinite domain such that if~$\Phi^{(A)}(n) \downarrow = i$ then~$\sigma_0$ deduces that~$\Phi(n) = i$. But~$\sigma_0$ cannot deduce the values of~$\Phi(n)$ for infinitely many~$n$, because otherwise there would be some~$\alpha<\omega_1^{CK}$ such that~$\emptyset^{(\alpha)}$ could compute infinitely many bits of~$B$. 

%%%%look over this part%%%%%%
This is because, for every~$\alpha\omega_1^{CK}$, we have that~$A \not\from_\ii \emptyset^{(\alpha)}$, so by Lemma \ref{L:deducecorrect} all the facts that~$\sigma_0$ can deduce are correct for~$B$. Also, by Lemma \ref{L:deducebounded}, each instance of deduction happens at an ordinal~$<\omega_1^{CK}$, and so, if~$\sigma_0$ could deduce the values of~$\Phi(n)$ for infinitely many~$n$, there would be some ordinal,~$\alpha<\omega_1^{CK}$ such that~$\sigma_0$ could~$\alpha$-deduce the values of~$\Phi(n)$ for infinitely many~$n$. By Lemma \ref{L:deducebounds}, we would then have that~$\emptyset^{(2\alpha+2)}$ could correctly compute infinitely many bits of~$B$, contracticting that for every~$\alpha<\omega_1^{CK}$, we have that~$B \not\from_\ii \emptyset^{(\alpha)}$.

It follows that the domain of~$\Phi^{(A)}$ must be finite, because~$\Phi^{(A)}$ cannot halt on any~$n$ such that~$\sigma_0$ cannot deduce the value of~$\Phi(n)$. This contradicts that~$\Phi$ witnesses an infinite-information reduction from~$A$ to~$B$. Thus the claim is proved.
%%%%look over this part%%%%%%

So fix a~$\sigma$ as given by the claim. Consider the set of all~$\tau\succ_1\sigma$ such that~$f_\tau$ does not incorrectly compute any bits of~$B$. (Thus, this set includes every~$1$-extension of~$\sigma$ that is a partial oracle for~$A$, but it may include other~$1$-extensions.) Let~$X$ be the set of numbers that are in the domain of~$f_\tau$ for some such~$\tau$ but not in the domain of~$f_\sigma$. We consider three cases, each of which leads to a contradiction.

\medskip
\noindent \emph{Case~1: There are infinitely many~$\tau \succ_1 \sigma$ such that~$f_\tau$ disagrees with~$B$ on some bit.} In this case, there must be some~$\alpha<\omega_1^{CK}$ such that there are infinitely many~$\tau \succ_1 \sigma$ that incorrectly deduce the value of~$\Phi(n)$ for some~$n$. Thus,~$A \from_\ii B\oplus\emptyset^{(2\alpha+2)}$.

 This is because~$B\oplus\emptyset^{(2\alpha+2)}$ can search for~$\tau \succ_1 \sigma$ for which~$\tau$~$\alpha$-deduces a false output, and whenever it finds such a~$\tau$, it knows that the extra bit in the domain of~$\tau$ disagrees with~$A$. Thus,~$B\oplus\emptyset^{(2\alpha+2)}$ can compute infinitely many bits of~$A$, and we have in particular a contradiction to the assumption that for each~$\alpha\omega_1^{CK}$,~$A \not\from_\ii B \oplus \emptyset^{(\alpha)}$.

\medskip
\noindent \emph{Case~2: There are only finitely many~$\tau \succ_1\sigma$ such that~$f_\tau$ disagrees with~$B$ on some bit, but~$X$ is infinite.} In this case, there must be some~$\alpha<\omega_1^{CK}$ such that there are infinitely many~$n$ such that~$\tau$~$\alpha$-deduces the value of~$\Phi(n)$. Thus,~$B \leq_\ii \emptyset^{(2\alpha+2)}$.

This is because for almost all~$\tau \succ_1 \sigma$, any output by~$f_\tau$ agrees with~$B$, and by assumption, there are infinitely many~$n$ such that~$\tau$~$\alpha$-deduces the value of~$\Phi(n)$ for some such~$\tau$. If~$\tau$~$\alpha$-deduces that~$\Phi(n)=i$, then~$f_\tau(n)=i$, so in particular, since only finitely many of the deduced facts can be false, we~$\emptyset^{(2\alpha+2)}$ can search for~$\tau\succ_1\sigma$ that~$\alpha$-deduce any facts other than those finitely many facts, Thus, we have a contradiction to the assumption that for each~$\alpha\omega_1^{CK}$,~$B \not\from_\ii \emptyset^{(2\alpha+2)}$.

\medskip
\noindent \emph{Case~3: Otherwise.} In this case, by the pigeonhole principle, there must be an~$n\in X$ such that~$f_\tau(n) \downarrow$ for infinitely many~$\tau\succ_1\sigma$. Furthermore, it must be that~$f_\tau(n) = B(n)$ for all such~$\tau$, since otherwise, we would be in Case~1. In other words, letting~$i = B(n)$, we have that infinitely many~$1$-extensions of~$\sigma$ deduce that~$\Phi(n) = i$. But by definition, this means that~$\sigma$ also~deduces that~$\Phi(n) = i$, so~$n$ could not have been in~$X$, a contradiction.

\medskip
The proof is complete.
\end{proof}

\noindent
We now present a pair of questions concerning joins and maximal pairs in the infinite information degrees.

The first question concerns whether the conditions Lemma \ref{L:maxpair1} are required to prove Theorem \ref{T:maxpair}. 

\begin{question}
If $B_0$, $B_1$ are mutually 1-random, or mutually 1-generic, do they necessarily form a maximal pair for $\leq_\ii?$
\end{question}

\noindent
By the proof of Theorem \ref{T:maxpair}, mutually $\Delta_1^1$-random, or mutually $\Delta_1^1$-generic would be sufficient, but we do not know whether the hyperarithmetic machinery of $\alpha$-deduction is required for the result.

Our second question, posed by Noah Schweber (private communication), concerns whether joins ever exist in the infinite information degrees.

\begin{question}\label{Q:iijoins}

Do there exist reals $A,B,C$ such that the following hold?

\begin{itemize}
\item $A\nleq_\ii B$, and $B\nleq_\ii A$.
\item $A\leq_\ii C$, and $B\leq_\ii C$.
\item For any $D$, if $A\leq_\ii D$, and $B\leq_\ii D$, then $C\leq_\ii D$.

\end{itemize}

\end{question}

\noindent
We have shown that under certain hypotheses, joins can fail to exist in the most spectacular way possible. However, we do not know whether under other conditions, joins can exist.

It is tempting to attempt to answer Question \ref{Q:iijoins} by letting $A=U\oplus W$, $B=V\oplus W$, $C=W$. This would certainly make $A\leq_\ii C$, and $B\leq_\ii C$. Some finesse in choosing $U,V,W$ can ensure that $A\nleq_\ii B$, and $B\nleq_\ii A$. However, the third point seems quite difficult to address.

\printbibliography

\end{document}